\theoremstyle{definition}
\newtheorem*{defn*}{Definition}
\newtheorem{defn}{Definition}[section]
\newtheorem{remark}[defn]{Remark}
\newtheorem{conj}{Conjecture}
\theoremstyle{theorem}
\newtheorem{prop}[defn]{Proposition}
\newtheorem{thm}[defn]{Theorem}
\newtheorem{lem}[defn]{Lemma}
\def\R{\mathcal{R}}
\def\S{\mathcal{S}}
\def\Q{\mathbb{Q}}
\def\C{\mathbb{C}}
\def\Z{\mathbb{Z}}
\def\eps{\varepsilon}
\def\ra{\rightarrow}
\begin{document}
\extrafloats{100}

\title{Generating sets for the kauffman skein module of a family of Seifert manifolds}

\author{Jos\'e Rom\'an Aranda} 
\author{Nathaniel Ferguson}
%

\begin{center}\Large{Generating sets for the kauffman skein module of a family \\ of Seifert fibered spaces}\\
\large{Jos\'e Rom\'an Aranda and Nathaniel  Ferguson}
\end{center}

\begin{abstract} 
We study spanning sets for the Kauffman bracket skein module $\S(M,\Q(A))$ of orientable Seifert fibered spaces with orientable base and non-empty boundary. As a consequence, we show that the KBSM of such manifolds is a finitely generated $\S(\partial M, \Q(A))$-module. 
\end{abstract}

Skein modules are a useful tool to study 3-manifolds. Roughly speaking, a skein module captures the space of links in a given 3-manifold, modulo certain local (skein) relations between the links.  
The choice of skein relations must strike a careful balance between providing interesting structure and ensuring that the structure is managable \cite{fundamentals_KBSM}. The most studied skein module is the Kauffman bracket skein module, so named because the skein relations are the same relations used in the construction of the Kauffman bracket polynomial.

Let $\mathcal{R}$ be a ring containing an invertible element $A$. The \textbf{Kauffman bracket skein module} of a 3-manifold $M$ is defined to be the $\R$-module $\S(M,\R)$ spanned by all framed links in $M$, modulo isotopy and the skein relations
\begin{center}
(K1): $\includegraphics[valign=c,scale=.45]{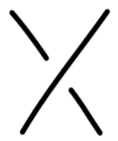}= A\includegraphics[valign=c,scale=.45]{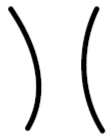}+ A^{-1} \includegraphics[valign=c,scale=.5]{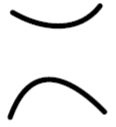}$ 
$ \quad \quad \quad$
(K2): $L\cup \includegraphics[valign=c,scale=.35]{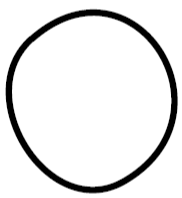} = \left( -A^2-A^{-2}\right) L$.
\end{center}

Throughout this note, when $\mathcal{R}$ is unspecified, it is assumed that $\S(M) = \S(M, \Q(A))$. Since its introduction by Przytycki \cite{KBSM_Przytycki} and Turaev \cite{KBSM_Turaev}, $\S(M,\R)$ has been studied and computed for various 3-manifolds. It is difficult to describe $\S(M,\R)$ for a given 3-manifold, although some results have been found\footnote{As summarized in \cite{fundamentals_KBSM} this is not a complete list of 3-manifolds for which $\S(M,\R)$ is known.}. 
\begin{itemize}
\item $\S(S^3, Z[A^{\pm 1}]) = Z[A^{\pm 1}]$.
\item $\S(S^1\times S^2,\Z[A^{\pm1}])$ is isomorphic to $\Z[A^{\pm1}] \oplus \left( \bigoplus_{i=1}^{\infty} \Z[A^{\pm1}]/(1-A^{2i+4})\right)$ \cite{KBSM_S1S2_PH}.
\item $\S(L(p, q), Z[A^{\pm 1}])$ is a free $Z[A^{\pm 1}]$ module with $\lfloor p/2 \rfloor + 1$ generators \cite{KBSM_lens_PH,non_comm_torus}.
\item $\S(\Sigma \times [0,1], Z[A^{\pm 1}])$ is a free module generated by multicurves in $\Sigma$  \cite{fundamentals_KBSM,confluence}.
\item $\S(\Sigma\times S^1, \Q(A))$ is a vector space of dimension $2^{2g+1}+2g-1$ if $\partial \Sigma=\emptyset$, \cite{KBSM_S1_uppper,KBSM_S1}.
\end{itemize}
In 2019, Gunningham, Jordan and Safronov proved that, for closed 3-manifolds, $\mathcal{S}(M,\C(A))$ is finite dimensional \cite{KBSM_finiteness}.  However, for 3-manifolds with boundary, this problem is still open. 
In \cite{KBSM_infinite}, Detcherry asked versions of a finiteness conjecture for the skein module of knot complements and general 3-manifolds (see Section 3 of \cite{KBSM_infinite} for a detailed exposition).

\begin{conj}[Finiteness conjecture for manifolds with boundary \cite{KBSM_infinite}]\label{conj_finiteness}
Let $M$ be a compact oriented 3-manifold. Then $\mathcal{S}(M)$ is a finitely generated $\mathcal{S}(\partial M, \Q(A))$-module. 
\end{conj}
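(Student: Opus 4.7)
The plan is to leverage the finite-dimensionality theorem of Gunningham-Jordan-Safronov for closed 3-manifolds together with the structure of the skein module of handlebodies, in order to control $\S(M)$ as a module over $\S(\partial M, \Q(A))$. Since $\partial M$ is a closed orientable surface, one can fix a handlebody $H$ with $\partial H \cong \partial M$ and form the closed 3-manifold $\hat{M} = M \cup_{\partial M} H$. The proposed route is to compare $\S(M)$ with $\S(\hat{M})$ via the action of $\S(\partial M, \Q(A))$ induced by a collar of $\partial M$.

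First I would construct the natural gluing map
\[
\Phi : \S(M) \otimes_{\S(\partial M, \Q(A))} \S(H) \longrightarrow \S(\hat{M}),
\]
and show it is surjective: any framed link in $\hat{M}$ can be isotoped transverse to $\partial M$, and applying skein relations inside the collar expresses it as a combination of products of skeins in $M$ with skeins in $H$. Next I would use the classical fact that $\S(H, \Q(A))$ is cyclic as an $\S(\partial H, \Q(A))$-module, generated by the empty link, because every skein in a handlebody can be pushed into a regular neighborhood of $\partial H$. Combined with the surjectivity of $\Phi$, this yields that the inclusion-induced map $\S(M) \to \S(\hat{M})$ is surjective, and by Gunningham-Jordan-Safronov its target is finite-dimensional over $\C(A)$ (hence over $\Q(A)$). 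Let $K \subset \S(M)$ denote the kernel of this surjection.

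The heart of the argument is to prove that $K$ is finitely generated as an $\S(\partial M, \Q(A))$-module. Elements of $K$ are precisely those skeins in $M$ that become trivial in $\hat{M}$ after capping with $H$; such relations are generated by slide moves across meridian disks of $H$. If $D_1,\dots,D_g$ is a complete system of meridian disks with boundary curves $\gamma_1,\dots,\gamma_g \subset \partial M$, then modulo the action of the $\gamma_i \in \S(\partial M, \Q(A))$, every element of $K$ ought to be expressible in terms of a bounded collection of skeins produced by compressing along the $D_i$. A rigorous implementation would proceed by induction on a handle decomposition of $M$ relative to $\partial M \times [0,1]$, attaching $1$- and $2$-handles one at a time and tracking how each step modifies the generating set.

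The main obstacle is controlling $K$: even though $\S(\hat{M})$ is finite-dimensional, $\S(M)$ can a priori be infinite-dimensional, so the capping operation destroys crucial information and recovering it requires a delicate local statement asserting that every skein in $M$ lies, modulo the $\S(\partial M, \Q(A))$-action, in finitely many orbits of a fixed finite set. The Seifert-fibered case developed in the present paper can be viewed as the first concrete instance of such a local statement, and a general proof would likely require either the factorization homology framework of Gunningham-Jordan-Safronov or a new geometric argument that uses the JSJ decomposition to reduce to Seifert-fibered and hyperbolic pieces, gluing the results along torus boundaries via the known structure of $\S(T^2 \times [0,1], \Q(A))$.
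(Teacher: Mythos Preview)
The statement you are attempting to prove is labeled \emph{Conjecture}~\ref{conj_finiteness} in the paper and is not proved there in general; it remains open. The paper establishes only the special cases of trivial circle bundles (Theorem~\ref{thm_conj_3_boundary}) and Seifert fibered spaces with orientable base and non-empty boundary (Theorem~\ref{thm_finite_SFS}), by explicit diagrammatic manipulations specific to those manifolds. There is therefore no proof in the paper to compare your proposal against.

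As for the proposal itself, there is a genuine gap, and you have identified it yourself: controlling the kernel $K$ of the surjection $\S(M)\to\S(\hat M)$. The reduction you set up is correct as far as it goes: $\S(H)$ is indeed cyclic over $\S(\partial H,\Q(A))$ (push links off a one-dimensional spine of the handlebody into a boundary collar), the gluing map $\Phi$ is surjective, and Gunningham--Jordan--Safronov gives $\dim_{\Q(A)}\S(\hat M)<\infty$. But all this yields is that a certain \emph{quotient} of $\S(M)$ is finite-dimensional over $\Q(A)$; it places no constraint whatsoever on $\S(M)$ as an $\S(\partial M,\Q(A))$-module. Every difficulty of the original conjecture is now concentrated in $K$.

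Your sketch for handling $K$ does not close the gap. You say $K$ is generated by slide moves across the meridian disks $D_1,\dots,D_g$, and that ``modulo the action of the $\gamma_i$'' everything should reduce to finitely many skeins. But the slide relations form an infinite family: for every framed tangle meeting $\partial D_i$ one obtains a relation, and the problem of showing these are finitely generated over $\S(\partial M,\Q(A))$ is not visibly easier than the original finiteness statement for $\S(M)$. The inductive handle-attachment scheme you propose runs into the same issue at each $2$-handle: attaching a $2$-handle along a curve $\gamma$ imposes the relation that $\gamma$ becomes trivial, and controlling the resulting quotient finitely over the new boundary skein algebra is again an instance of the conjecture. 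In short, the capping-off strategy trades the conjecture for an equivalent statement about $K$ rather than reducing its difficulty; your own final paragraph effectively concedes this.
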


This paper studies the finiteness conjecture for a large family of Seifert fibered spaces, SFS. Let $\Sigma$ be an orientable surface of genus $g$ with $N$ boundary components. Let $n$, $b$ be non-negative integers with $N=n+b$. For each $i=1, \dots, n$, pick pairs of relatively prime integers $(q_i, p_i)$ satisfying $0<q_i < |p_i|$. The 3-manifold $\Sigma\times S^1$ has torus boundary components with horizontal meridians $\mu_i\subset \Sigma\times \{pt\}$ and vertical longitudes $\lambda_i=\{pt\} \times S^1$. Denote by $M\left(g; b,\{(q_i,p_i)\}_{i=1}^n\right)$ the result of Dehn filling the first $n$ tori of $\partial \left(\Sigma\times S^1\right)$ with slopes $q_i\mu_i + p_i\lambda_i$. Every SFS with orientable base orbifold is of the form $M\left(g; b,\{(q_i,p_i)\}_{i=1}^n\right)$ \cite{hatcher_3M}. The main result of this paper is to establish Conjecture \ref{conj_finiteness} for such SFS. 

\newtheorem*{thm:conj_3_boundary}{Theorem \ref{thm_conj_3_boundary}}
\begin{thm:conj_3_boundary}
Let $\Sigma$ be an orientable surface with non-empty boundary. Then $\mathcal{S}(\Sigma\times S^1)$ is a finitely generated $\mathcal{S}(\partial \Sigma\times S^1, \Q(A))$-module of rank at most $2^{2g+1}-1$. 
\end{thm:conj_3_boundary}

\newtheorem*{thm:finite_SFS}{Theorem \ref{thm_finite_SFS}}
\begin{thm:finite_SFS}
Let $M=M\left(g; b,\{(q_i,p_i)\}_{i=1}^n\right)$ be an orientable Seifert fibered space with non-empty boundary. Suppose $M$ has orientable orbifold base. Then, $\mathcal{S}(M)$ is a finitely generated $\mathcal{S}(\partial M, \Q(A))$-module of rank at most $(2^{2g+1}-1) \prod_{i=1}^n (2q_i-1)$.
\end{thm:finite_SFS}

The following is a more general formulation of the finiteness conjecture. 

\begin{conj}[Strong finiteness conjecture for manifolds with boundary \cite{KBSM_infinite}]\label{conjecture_strong_finiteness}
Let $M$ be a compact oriented 3-manifold. Then there exists a finite collection $\Sigma_i, \dots, \Sigma_k$ of essential subsurfaces $\Sigma_i \subset \partial M$ such that: 
\begin{itemize} 
\item for each $i$, the dimension of $H_1(\Sigma_i, \Q)$ is half of $H_1(\partial M, \Q)$; 
\item the skein module $\mathcal{S}(M)$ is a sum of finitely many subspaces $F_1, \dots, F_k$, where $F_i$ is a finitely generated $\mathcal{S}(\Sigma_i, \Q(A))$-module. 
\end{itemize}
\end{conj}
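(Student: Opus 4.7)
The strong finiteness conjecture is open in general, so the plan is necessarily a proposal for how one might attack it in the Seifert fibered setting studied in this paper, upgrading Theorem~\ref{thm_finite_SFS} to the stronger Lagrangian decomposition predicted by Conjecture~\ref{conjecture_strong_finiteness}. Write $M = M(g;b,\{(q_i,p_i)\}_{i=1}^n)$ with $b \ge 1$ and let $T_1',\dots,T_b'$ denote its boundary tori. Then $H_1(\partial M,\Q) = \bigoplus_{j=1}^b H_1(T_j',\Q)$ is a symplectic vector space of dimension $2b$ under the intersection pairing, and a Lagrangian subspace of dimension $b$ is specified by choosing one essential slope $c_j$ on each $T_j'$, realised as a union of annuli $\Sigma = \bigsqcup_{j=1}^b A_j \subset \partial M$ for which $\mathcal{S}(\Sigma,\Q(A)) \cong \bigotimes_j \Q(A)[c_j]$.

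The approach would be two-step. First, apply Theorem~\ref{thm_finite_SFS} to obtain a finite generating set $\mathcal{G}'$ of $\mathcal{S}(M)$ as a module over $\mathcal{S}(\partial M,\Q(A)) = \bigotimes_j \mathcal{S}(T_j',\Q(A))$. Second, decompose each torus factor $\mathcal{S}(T_j',\Q(A))$ as a finite sum $\sum_k \mathcal{S}(A_j^{(k)},\Q(A)) \cdot v_{j,k}$ of modules over annular subalgebras, where the $A_j^{(k)}$ are finitely many annular subsurfaces of $T_j'$ and the $v_{j,k} \in \mathcal{S}(T_j',\Q(A))$. Taking products across $j$ and multiplying the resulting translates through $\mathcal{G}'$ would yield a finite family of Lagrangian subsurfaces $\Sigma_\ell = \bigsqcup_j A_j^{(k_j)} \subset \partial M$ together with submodules $F_\ell \subseteq \mathcal{S}(M)$, each finitely generated over $\mathcal{S}(\Sigma_\ell,\Q(A))$, whose sum exhausts $\mathcal{S}(M)$.

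The main obstacle is the second step. The torus skein module $\mathcal{S}(T^2,\Q(A))$ is spanned by $(a,b)$-curves for all coprime pairs, and no finite family of annular subalgebras decomposes it as a finite sum of finitely generated modules in the absolute sense; any candidate finite set of slopes misses infinitely many others. The argument must therefore take place inside $\mathcal{S}(M)$, and exploit the SFS-specific relations that also power Theorem~\ref{thm_finite_SFS}: the identification of regular fibers on different boundary tori via the Seifert fibration, and the cabling reduction modulo the meridians of the filled solid tori. The hope is that these relations force arbitrary slopes on each $T_j'$, when viewed in $\mathcal{S}(M)$, to lie in the span of finitely many annular modules. Making this reduction effective and globally compatible across the $b$ boundary tori --- so that the local annular decompositions assemble into a single Lagrangian $\Sigma_\ell$, and the resulting $F_\ell$ is finitely generated over $\mathcal{S}(\Sigma_\ell,\Q(A))$ and not merely over the full boundary algebra --- is where the difficulty of the strong conjecture concentrates, and is presumably why it remains open even in the Seifert fibered setting.
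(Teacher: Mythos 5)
The statement you were asked to address is a conjecture quoted from \cite{KBSM_infinite}; the paper does not prove it in general, and your proposal is right to treat it as open. What the paper does prove is Theorem \ref{conj2_SFS}: the conjecture holds for the subfamily $M\left(g;1,\{(1,p_i)\}_{i=1}^n\right)$, i.e.\ a single boundary torus and all $q_i=1$. Measured against that, your plan contains one step that would fail and misses the idea the paper actually uses. Your second step asks to decompose each torus factor $\mathcal{S}(T_j',\Q(A))$ as a finite sum of finitely generated modules over annular subalgebras; as you yourself observe, this is impossible, since no finite family of annuli in $T^2$ accounts for all slopes. The paper never attempts such a decomposition of the boundary algebra. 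Instead it partitions a generating set of $\mathcal{S}(M)$ itself into two classes and assigns each class to a \emph{different} annulus in the one boundary torus: $F_1$, spanned by the non-separating curves and the boundary-parallel diagrams carrying a bounded number of arrows, is finitely generated over $\mathcal{S}(\Sigma_1,\Q(A))$ with $\Sigma_1$ a neighborhood of the meridian $\mu_N$; and $F_2$, spanned by arrowed unknots, has rank one over $\mathcal{S}(\Sigma_2,\Q(A))$ with $\Sigma_2$ a neighborhood of the fiber $\lambda$.

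The engine that makes $F_1$ finitely generated over the meridian annulus is Lemma \ref{lem_earthquake}, a relation that already holds in $\mathcal{S}(\Sigma\times S^1)$ rather than a consequence of the Dehn-filling relations: a boundary-parallel loop carrying at least $4g+2N-2$ arrows can be rewritten in terms of diagrams with strictly fewer arrows together with diagrams having an extra one-arrowed loop around every boundary component. This trades winding in the fiber direction for powers of $\mu_N$ and caps the number of arrows at $4g+2n$, leaving finitely many generators over $\Q(A)[\mu_N]$. The filling relations (the $\Omega(1,p_i)$-moves) enter only to convert loops around the exceptional fibers into arrowed unknots, which is exactly why the hypothesis $q_i=1$ is needed. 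Your instinct that the SFS-specific relations must force arbitrary boundary slopes into finitely many annular modules points in the right direction, but the concrete mechanism is this arrow-for-meridian trade inside $\mathcal{S}(M)$, not a decomposition of $\mathcal{S}(\partial M,\Q(A))$; and the case $b\geq 2$, where one genuinely has to coordinate Lagrangian choices across several tori, is left open by the paper, consistent with your closing remarks.
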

We are able to show this conjecture for a subclass of SFS. 
\newtheorem*{thm:conj2_SFS}{Theorem \ref{conj2_SFS}}
\begin{thm:conj2_SFS}
Seifert fibered spaces of the form $M\left(g; 1,\{(1,p_i)\}_{i=1}^{n}\right)$ satisfy Conjecture \ref{conjecture_strong_finiteness}. In particular, Conjecture \ref{conjecture_strong_finiteness} holds for $\Sigma_{g,1}\times S^1$. 
\end{thm:conj2_SFS}
The techniques in this work are based on the ideas of Detcherry and Wolff in \cite{KBSM_S1}. For simplicity, we set $\mathcal{R} = \Q(A)$ by default, even though our statements work for any ring $\mathcal{R}$ such that $1 - A^{2m}$ is invertible for all $m > 0$. It would be interesting to see if the generating sets in this work can be upgraded to verify Conjecture \ref{conjecture_strong_finiteness} for all Seifert fibered spaces. Although there is no reason to expect the generating sets to be minimal, we wonder if the work of Gilmer and Masmbaum in \cite{KBSM_S1_uppper} could yield similar lower bounds.

\textbf{Outline of the work.} 
The sections in this paper build-up to the proof of Theorems \ref{thm_finite_SFS} and \ref{conj2_SFS} in Section \ref{section_SFS}. Section \ref{section_prelims} introduces the arrowed diagrams which describe links in $\Sigma\times S^1$. We show basic relations among arrowed diagrams in Section \ref{section_relations_pants}. Section \ref{section_planar_case} proves that $\S(\Sigma_{0,N}\times S^1)$ is generated by boundary parallel diagrams. 
Section \ref{section_non_planar_case} studies the positive genus case $\S(\Sigma_{g,N}\times S^1)$; we find a generating set over $\Q(A)$ in Proposition \ref{thm_boundary_case}. 
In Section \ref{section_SFS}, we describe global and local relations between links in the skein module of Seifert fibered spaces. We use this to build generating sets in Section \ref{section_proof_41}.

\textbf{Acknowledgments.} This work is the result of a course at and funding from Colby College. The authors are grateful to Puttipong Pongtanapaisan for helpful conversations and Scott Taylor for all his valuable advice.


\section{Preliminaries}\label{section_prelims}

Most of the arguments in this paper will focus on finding relations among links in $\Sigma\times S^1$ for some compact orientable surface $\Sigma$. The main technique is the use of arrow diagrams introduced by Dabkowski and Mroczkowski in \cite{KBSM_pants_S1}. \\
An \textbf{arrow diagram} in $\Sigma$ is a generically immersed 1-manifold in $\Sigma$ with finitely many double points, together with crossing data on the double points, and finitely many arrows in the embedded arcs. Such diagrams describe links in $\Sigma\times S^1$ as follows: Write $S^1 = [0,1]/\left(0\sim 1\right)$. Lift the knot diagram in $\Sigma\times \{1/2\}$ away from the arrows to a union of knotted arcs in $\Sigma\times [1/4,3/4]$, and interpret the arrows as vertical arcs intersecting $\Sigma\times\{1\}$ in the positive direction. We can use the surface framing on arrowed diagrams to describe framed links in $\Sigma\times S^1$. 

\begin{figure}[h]
\centering
\includegraphics[scale=.5]{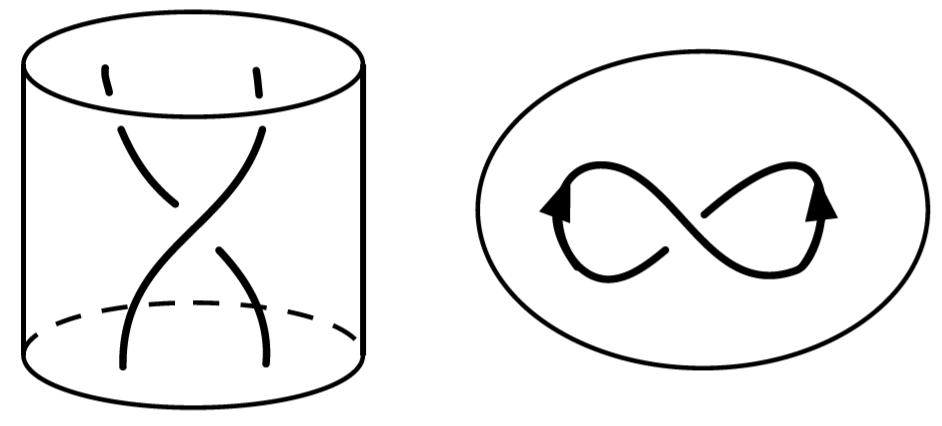}
\caption{Example of arrowed diagram.}
\label{fig_arrowed_diagram}
\end{figure}

Arrowed diagrams have been used to study the skein module of $\Sigma_{0,3}\times S^1$ \cite{KBSM_pants_S1}, prism manifolds \cite{KBSM_prism}, the connected sum of two projective spaces \cite{KBSM_RP3RP3}, and $\Sigma_g \times S^1$ \cite{KBSM_S1}.

\begin{prop}[\cite{KBSM_pants_S1}]
Two arrowed diagrams of framed links in $\Sigma \times S^1$ correspond to isotopic links if and only if they are related by standard Reidemeister moves $R'_1$, $R_2$, $R_3$ and the moves
\begin{center}
(R4): $\includegraphics[valign=c,scale=.5]{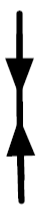}\sim \includegraphics[valign=c,scale=.5]{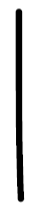}\sim \includegraphics[valign=c,scale=.5]{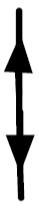}  \quad \quad \quad \quad $
\text{(R5):} $\includegraphics[valign=c,scale=.5]{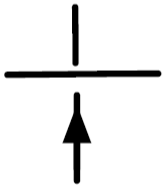} \sim \includegraphics[valign=c,scale=.5]{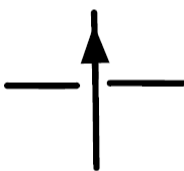}$.
\end{center}
\end{prop}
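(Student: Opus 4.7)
The plan is to treat the ``easy'' direction by inspection and the ``only if'' direction by the standard trick of cutting $\Sigma\times S^1$ open along a level surface and reducing to the known Reidemeister theorem for $\Sigma\times [0,1]$.

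For the easy direction I would realize each move by an explicit local isotopy of framed links in $\Sigma\times S^1$. The moves $R'_1, R_2, R_3$ are the usual Reidemeister moves applied inside the slab $\Sigma\times[1/4,3/4]$, where the projection to $\Sigma$ recovers ordinary link-diagram moves. For (R4), the three pictures differ by pushing a vertical arc past a crossing across the level $\Sigma\times\{1\}$, which is a local isotopy supported in a small ball meeting that level. For (R5) the two pictures differ by pulling a short $U$-shaped arc up through $\Sigma\times\{1\}$ and then down again, cancelling an up-arrow/down-arrow pair with a planar arc. Each of these is straightforwardly drawn as an embedded movie in $\Sigma\times S^1$.

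For the ``only if'' direction I would argue as follows. Write $S^1=[0,1]/(0\sim 1)$ and cut along $F=\Sigma\times\{0\}=\Sigma\times\{1\}$; this turns a framed link $L\subset \Sigma\times S^1$ into a framed tangle $\widetilde{L}\subset \Sigma\times[0,1]$ whose boundary consists of matched pairs of points on $\Sigma\times\{0\}$ and $\Sigma\times\{1\}$. The arrows in the diagram are exactly the vertical projections of these tangle strands, oriented by the $S^1$ direction. Given two isotopic links $L_0,L_1$ and an ambient isotopy $\Phi_t$ between them, one may first perturb $\Phi_t$ so that (i) the projection of $\Phi_t(L)$ to $\Sigma$ is generic for all $t$ except at finitely many values, and (ii) $\Phi_t(L)$ is transverse to $F$ for all $t$ except at finitely many values, with only the simplest tangencies occurring and never simultaneously with (i).

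The decomposition of $\Phi_t$ into elementary moves then splits into two types. When only (i) fails, the diagram in $\Sigma$ undergoes a standard Reidemeister $R_1', R_2$ or $R_3$ move (with the arrow decoration carried passively along), as in the classical theorem for links in a thickened surface. When only (ii) fails, a short arc of the link becomes tangent to $F$; either a new pair of tangle-endpoints is created/cancelled on $F$ (producing an up/down arrow pair on a single strand, which is exactly (R5)), or an existing intersection point of $L$ with $F$ slides through a crossing of the projection, which is exactly the three-way equivalence (R4). I would verify both local models by drawing the before/after projections and checking that they match the pictures in the statement.

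The main obstacle is the generic-position argument: establishing that after perturbation every codimension-one event in the isotopy is of one of these two types and each corresponds to precisely one of $R_1', R_2, R_3, R_4, R_5$. In particular one must rule out mixed events (a Reidemeister tangency happening on $F$) by a second transversality perturbation, and one must treat the arrow-framing carefully, since the framed version of $R_1$ is $R_1'$ and the framing contribution of an arrow must be tracked through moves (R4) and (R5). Once this local classification is in hand, the global statement follows by concatenating the elementary steps along $\Phi_t$.
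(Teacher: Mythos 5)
This proposition is imported verbatim from \cite{KBSM_pants_S1}; the paper offers no proof of it, so there is nothing internal to compare against. Your sketch is the standard argument used in that reference (and in Mroczkowski's earlier work on arrow diagrams): cut along $\Sigma\times\{1\}$, put the ambient isotopy in general position simultaneously with respect to the projection to $\Sigma$ and to the level surface, and match the codimension-one events to $R_1',R_2,R_3$ (projection degenerations), $R_5$ (tangency with the level surface creating or cancelling an oppositely oriented arrow pair), and $R_4$ (a point of $L\cap(\Sigma\times\{1\})$ passing over a double point of the projection). The outline is sound; what remains is exactly the routine work you already flag, namely verifying the local models and excluding mixed codimension-two events by a further perturbation.
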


From relation $R_4$, we only need to focus on the total number of the arrows between crossings. We will keep track of them by writting a number $n\in \Z$ next to an arrow. Negative values of $n$ correspond to $|n|$ arrows in the opposite direction.

Throughout this work, a \textbf{simple arrowed diagram} (or arrowed multicurve) will denote an arrowed diagram with no crossings. 
A simple closed curve in $\Sigma$ will be said to be \textbf{trivial} if it bounds a disk. We will sometimes refer to trivial curves bounding disks disjoint from a given diagram as unknots. Loops parallel to the boundary will not be considered trivial. A simple closed curve will be \textbf{essential} if it does not bound a disk nor is parallel to the boundary in $\Sigma$.

We can always resolve the crossings of an arrowed diagram via skein relations. Thus, every element in $\mathcal{S}(\Sigma\times S^1)$ can be written a $\Z[A^{\pm1}]$-linear combination of arrowed diagrams with no crossings. The following equation will permit us to disregard arrowed unknots, since we can merge them with other loops. 
\begin{equation}\label{eqn_23}
\includegraphics[valign=c,scale=.4]{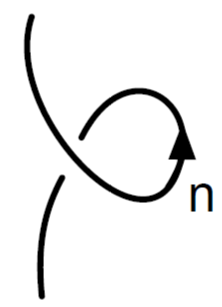} = \includegraphics[valign=c,scale=.4]{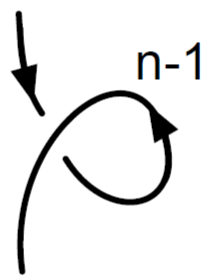} 
\quad \implies \quad
A \includegraphics[valign=c,scale=.4]{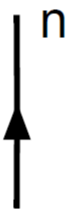} + A^{-1} \includegraphics[valign=c,scale=.4]{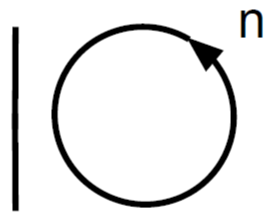} = A^{-1} \includegraphics[valign=c,scale=.4]{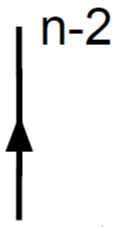} + A \includegraphics[valign=c,scale=.4]{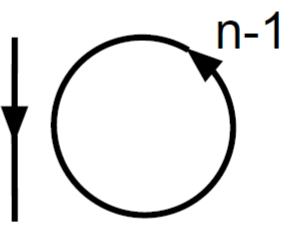}.
\end{equation}

Equation \eqref{eqn_23} implies Proposition \ref{prop_2.3}.
\begin{prop}[\cite{KBSM_S1}]\label{prop_2.3}
The skein module $\mathcal{S}(\Sigma\times S^1)$ is spanned by arrowed multicurves containing no trivial component, and by the arrowed multicurves consisting of just one arrowed unknot with some number of boundary parallel arrowed curves.
\end{prop}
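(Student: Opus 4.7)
The plan is to reduce any arrowed multicurve in $\Sigma\times S^1$ to a $\Q(A)$-linear combination of the two claimed forms. Since (K1) resolves all crossings, it suffices to work with arrowed multicurves. Given such a $D$, I would induct on the lexicographic pair $(T(D),\, N(D))$, where $T(D)$ is the number of trivial (disk-bounding) components of $D$ and $N(D)$ is a measure of the arrow data carried by those components.

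The base cases match the spanning set: $T(D)=0$ gives a diagram of the first form, while $T(D)=1$ with every other component boundary-parallel gives a diagram of the second form. Otherwise, $D$ has a trivial component $U$ together with another component $C$ (either another trivial unknot or an essential curve). Since $U$ bounds a disk in $\Sigma$, I would isotope it into a small neighborhood of a strand of $C$ and apply equation \eqref{eqn_23} to the resulting local picture. This rewrites
\[
D \;=\; A^{-2}\, D_1 \;+\; D_2 \;-\; A^{-2}\, D_3,
\]
where $D_1$ has $U$'s arrow count decreased by two, $D_2$ has $U$ merged into the strand of $C$ (so that $T(D_2)=T(D)-1$), and $D_3$ has $U$ replaced by a zero-arrow trivial loop, which (K2) simplifies to $(-A^2-A^{-2})$ times the multicurve obtained by deleting $U$ (again lowering $T$ by one). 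Each summand is strictly simpler in the chosen complexity, so the inductive hypothesis expresses each of them, and hence $D$ itself, in the span of the two claimed families.

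The main obstacle I foresee is calibrating $N(D)$ so that the arrow-reducing branch really does strictly descend in every step. The naive choice $N(D) = \sum_U |n_U|$ stalls when $n_U$ toggles between $+1$ and $-1$ under the operation $n\mapsto n-2$. I would address this either by refining $N$ to a signed or offset quantity that decreases monotonically under that operation, or by exploiting the fact that the other two branches of \eqref{eqn_23} always strictly lower $T(D)$, so finitely many iterations must force the overall number of trivial components to drop before returning attention to $N$. The remaining details — checking that merging genuinely reduces $T$ regardless of the topological type of $C$, and that the isotopy placing $U$ adjacent to $C$ introduces no new trivial components — are routine bookkeeping.
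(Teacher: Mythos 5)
Your proposal follows essentially the same route as the paper: the paper's entire proof of Proposition \ref{prop_2.3} is the observation that Equation \eqref{eqn_23} lets one merge an arrowed unknot into a neighboring loop, and your induction on the pair (number of trivial components, arrow data of those components) is the natural way to make that precise; your case analysis for choosing the companion curve $C$ is complete. The one point that needs care is exactly the one you flagged, and of your two proposed fixes only the first is viable. The second fails: the branch $D_1$ (unknot arrow count decreased by two) does not lower $T$, so it can spawn an infinite chain $D \to D_1 \to (D_1)_1 \to \cdots$ no matter how well the other branches behave, and "finitely many iterations" is precisely what is not guaranteed. The working calibration is the one the paper implements in Lemmas \ref{lem_popping_arrows} and \ref{lem_merging_unknots}: for a positive arrow count solve \eqref{eqn_23} for the $n$-term and descend $n \mapsto n-2$; for a negative count solve instead for the $(n-2)$-term and ascend $n \mapsto n+2$; this strands you at an unknot with $0$ or $\pm 1$ arrows, and the residual $\pm1$-arrowed unknot is absorbed into $C$ by combining \eqref{eqn_23} with $S_1 = A^6 S_{-1}$ (Lemma \ref{lem_flip_unknot}) and the invertibility of $1-A^{2m}$ in $\Q(A)$ --- the one place where the choice of coefficient ring actually enters.
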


\begin{defn}[Dual graph \cite{KBSM_S1}]
Let $\gamma\subset \Sigma$ be an arrowed multicurve. 
Let $c$ be the multicurve consisting of one copy of each isotopy class of separating essential loop in $\gamma$. Let $V$ be the set of connected components of $\Sigma-c$. For $v\in V$, denote by $\Sigma(v)\subset \Sigma$ the corresponding connected component of $\Sigma-c$. Two distinct vertices share an edge $(v_1, v_2) \in E$ if the subsurfaces $\Sigma(v_1)$ and $\Sigma(v_2)$ have a common boundary component. Define the \textbf{dual graph} of $\gamma$ to be the graph $\Gamma(\gamma)=(V,E)$. 
\end{defn}

\subsection{Relations between skeins}\label{section_relations_pants}
We now study some operations among arrowed multicurves in $\mathcal{S}(\Sigma\times S^1)$ that change the number of arrows in a controlled way. Although one can observe that all relations happen on a three-holed sphere, we write them separately for didactical purposes. 

In practice, a vertical strand will be part of a concentric circle. Lemma \ref{lem_popping_arrows} states that we can `pop-out' the arrows from a loop with the desired sign (of $y$ and $x$) without increasing the number of arrows in the diagrams. Lemma \ref{lem_flip_unknot} states that we can change the sign of the arrows in an unknot at the expense of adding skeins with fewer arrows. Lemma \ref{lem_merging_unknots} allows us to `break' and `merge' the arrows in between two unknots. Lemma \ref{lem_pushing_arrows} lets us pass arrows between parallel (or nested), and Lemma \ref{lem_crossing_borders} is an explicit case of Equation \eqref{eqn_23}.
The symbol $\includegraphics[scale=.65]{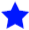}$ in Lemmas \ref{lem_merging_unknots} and \ref{lem_pushing_arrows} will correspond to any subsurface of surface $\Sigma$. In practice, $\includegraphics[scale=.65]{blue_star.png}$ will correspond to a boundary component of $\Sigma$ or an exceptional fiber in Section \ref{section_SFS}.

\begin{lem} \label{lem_popping_arrows}
For any $m\in \Z-\{0\}$, 
\begin{enumerate}[label=(\roman*)] 
\item 
$ \includegraphics[valign=c,scale=.55]{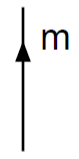} \in \Z[A^\pm] \bigg\{ \quad\includegraphics[valign=c,scale=.55]{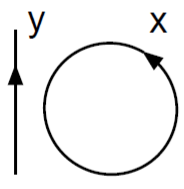}\quad : mx\geq 0, y\in \{0,1\}, y+|x|\leq |m|\bigg\}$.
\item
$ \includegraphics[valign=c,scale=.55]{fig_14_m.png} \in \Z[A^\pm] \bigg\{ \quad\includegraphics[valign=c,scale=.55]{fig_14_yx.png}\quad : mx\geq 0, y\in \{0,-1\}, |y|+|x|\leq |m|\bigg\}$.
\end{enumerate}
\end{lem}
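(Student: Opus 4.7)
I would prove part (i) by induction on $|m|$, using Equation~\eqref{eqn_23} as the main rewriting tool. Part (ii) will then follow from part (i) by an orientation-reversing involution.

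The base cases $|m| \in \{0,1\}$ are immediate: the original diagram is itself of the form $D(y=0,\, x=m)$, which satisfies $mx = m^2 \geq 0$, $y = 0 \in \{0,1\}$, and $y + |x| = |m|$, so it already belongs to the claimed spanning set.

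For the inductive step with $|m| \geq 2$, I would rearrange Equation~\eqref{eqn_23} to express the diagram $D(m)$ (with $m$ arrows on the original strand and none on the adjacent vertical strand) as a $\Z[A^{\pm 1}]$-linear combination of the three other terms appearing in that identity. The orientation of the equation is chosen so that one of these terms has strictly fewer arrows on the original strand, namely a diagram of the form $D(m-2)$, which is treatable by the inductive hypothesis; another is already of the desired form $D(y=1,\, x=m-1)$, satisfying $mx = m(m-1) \geq 0$ for $m \geq 1$ and $y + |x| = m = |m|$. The remaining ``correction'' term, in which arrows have shifted past the vertical strand, is handled by a second application of Equation~\eqref{eqn_23} (or by a symmetric rearrangement of the same relation), which either reduces its arrow count or produces summands already in the target spanning set. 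For $m < 0$, I apply the mirror version of Equation~\eqref{eqn_23} so that residual arrows on the original strand remain non-positive, preserving $mx \geq 0$.

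Part (ii) follows from part (i) via the involution of $\S(\Sigma \times S^1)$ induced by orientation reversal of the $S^1$ factor, which sends any arrowed diagram to the same underlying diagram with every arrow direction reversed, i.e., $D(y,x) \mapsto D(-y,-x)$. Applying this involution to the statement of part (i) transforms the conditions $y \in \{0,1\}$ and $y + |x| \leq |m|$ into $y \in \{0,-1\}$ and $|y| + |x| \leq |m|$, which are precisely the conditions of part (ii).

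\textbf{Main obstacle.} The principal difficulty is the bookkeeping required to ensure that the sign condition $mx \geq 0$ is preserved at every recursive step and that the total arrow count strictly decreases at each reduction, so that the induction terminates. In particular, the direction in which Equation~\eqref{eqn_23} is applied must be consistent with the sign of $m$, and when the auxiliary ``correction'' term is itself rewritten by re-applying Equation~\eqref{eqn_23}, one must verify that no summand violating $mx \geq 0$ or with arrow count exceeding $|m|$ is introduced.
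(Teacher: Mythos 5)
Your plan reproduces the paper's core mechanism: an induction on $|m|$ driven by Equation~\eqref{eqn_23}, solving the relation for whichever term lets the arrow count drop while keeping the residual arrows' sign compatible with $mx\geq 0$, with the $m<0$ case handled by solving for the other strand term and reindexing. Two points differ. The substantive one: Equation~\eqref{eqn_23} pops the unknot off with a \emph{fixed} arrow orientation, and that orientation is the one required by part (ii) ($y\in\{0,-1\}$), not part (i). This is precisely why the paper first adds an arrow at the top of all four arcs of Equation~\eqref{eqn_23} (setting $n=m-1$) to produce the auxiliary identity~\eqref{eqn_23+}, whose popped unknot carries the opposite sign, and then runs the induction on~\eqref{eqn_23+} for part (i) and on~\eqref{eqn_23} for part (ii). As written, your induction for part (i) would generate terms with $y=-1$ and stall. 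Your involution trick does rescue the lemma as a whole: reversing the $S^1$ factor sends $D(y,x)\mapsto D(-y,-x)$ and $m\mapsto -m$, which exchanges the two sets of conditions, so proving whichever part Equation~\eqref{eqn_23} actually yields and then applying the involution gives the other. But you should note that this diffeomorphism is orientation-reversing, so the induced automorphism of $\mathcal{S}(\Sigma\times S^1)$ is only $\Z$-linear and conjugates $A\mapsto A^{-1}$; this is harmless for a statement about membership in a $\Z[A^{\pm1}]$-span (that ring is stable under $A\mapsto A^{-1}$), but it must be said, and one should check the local relation is applied inside a ball so that reversing \emph{all} arrows of an ambient diagram is not an issue. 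The minor point: your ``correction term'' is just $D(0,m)$, a zero-arrowed unknot beside the strand, which already satisfies $mx\geq 0$, $y=0$, $y+|x|=|m|$ and hence lies in the target set (alternatively it is a (K2)-multiple of the left-hand side and can be absorbed there); no second application of Equation~\eqref{eqn_23} is needed. With the sign issue repaired --- either by deriving the shifted identity~\eqref{eqn_23+} as the paper does, or by accepting that your induction proves part (ii) first and letting the involution deliver part (i) --- your argument goes through.
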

\begin{proof}
Add one arrow pointing upwards at the top end of the arcs in Equation \eqref{eqn_23} and set $n=m-1$. We obtain the following equation
\begin{equation}\label{eqn_23+}
A \includegraphics[valign=c,scale=.5]{fig_14_m.png} + A^{-1} \includegraphics[valign=c,scale=.5]{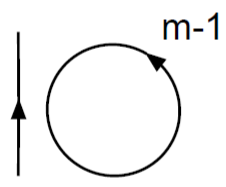} = A^{-1} \includegraphics[valign=c,scale=.5]{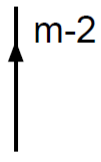} + A \includegraphics[valign=c,scale=.5]{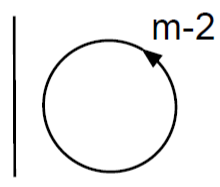}.
\end{equation}
If $m>0$, we can solve for \includegraphics[valign=c,scale=.5]{fig_14_m.png} and use it inductively to show Part (i). If $m<0$, we can instead solve for \includegraphics[valign=c,scale=.5]{fig_14_m-2.png} and set $m'=m-2$. This new equation can be use to prove Part (i) for $m'<0$. \\
Part (ii) is similar. Start with Equation \eqref{eqn_23} with $m=n$ and solve for \includegraphics[valign=c,scale=.5]{fig_14_m.png} to prove Part (ii) for $m>0$. If $m<0$, set $m=n-2$ in Equation \eqref{eqn_23}.
\end{proof}

\begin{lem}[Proposition 4.2 of \cite{KBSM_S1}]\label{lem_flip_unknot}
Let $S_k$ be an unknot in $\Sigma$ with $k\in \Z$ arrows oriented counterclockwise. The following holds for $n\geq 1$, 
\begin{enumerate}[label=(\roman*)] 
\item $S_1 = A^6 S_{-1}$
\item $S_{-n} = A^{-(2n+4)} S_{n}$ modulo $\Q(A)\{ S_0, \dots, S_{n-1}\}$. 
\item $S_{n} = A^{2n+4} S_{-n}$ modulo $\Q(A)\{ S_{-(n-1)}, \dots, S_{0}\}$. 
\end{enumerate}
\end{lem}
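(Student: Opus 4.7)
My plan is to prove Part (i) directly as an exact identity, and then derive Parts (ii) and (iii) by induction on $n \geq 1$, with Part (i) serving as both the base case and a key ingredient (the modular-span versions in (ii) and (iii) at $n=1$ are strictly weaker than the exact equality in (i)).

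For Part (i), the idea is to realize the transformation $S_1 \to S_{-1}$ as a sequence of diagrammatic moves whose net effect is purely a framing change. Concretely, I would display $S_1$ as a small unknot carrying one upward arrow and then drag that arrow once around the loop using the moves $R_4$ and $R_5$. Each time the arrow passes a piece of the loop, the combination of $R_4$, $R_5$ and the resulting $R'_1$ kinks contributes a framing factor $-A^{\pm 3}$ from the standard Kauffman computation, and the net effect of closing the loop is that the arrow reverses orientation while accumulating the factor $(-A^3)^2 = A^6$. Because the intermediate moves only introduce kinks and reversed arrows (no stray trivial unknots or extra components), the resolution is an exact equality $S_1 = A^6 S_{-1}$ with no lower-order correction.

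For Parts (ii) and (iii), I induct on $n$. For the inductive step proving Part (iii), I apply Equation \eqref{eqn_23+} with $m = n$ to a small pair of adjacent arcs of $S_n$ and solve for the $S_n$ term. This expresses $S_n$ as an $A^{-2}$ multiple of $S_{n-2}$ plus correction terms of the form of two nested arrowed loops with total arrow count at most $n-1$. Those corrections are reduced by Lemma \ref{lem_popping_arrows} to linear combinations of $S_0, \ldots, S_{n-1}$. By the inductive hypothesis, $S_{n-2}$ equals $A^{2(n-2)+4} S_{-(n-2)}$ modulo $\Q(A)\{S_{-(n-3)}, \ldots, S_0\}$; a dual application of Equation \eqref{eqn_23+} with $m = -(n-2)$ then pushes $S_{-(n-2)}$ down to $S_{-n}$, contributing an additional $A^{2}$-factor from solving for the negative-arrow term. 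Combining these yields the coefficient $A^{2n+4}$ in front of $S_{-n}$, while every other term is in the claimed modular span $\Q(A)\{S_{-(n-1)}, \ldots, S_0\}$. Part (ii) is dual and follows by the symmetric argument starting from $S_{-n}$, or by algebraically inverting the identity from Part (iii).

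The main obstacle is the coefficient bookkeeping through the induction. Each application of Equation \eqref{eqn_23+} produces several diagrams that must be sorted into either "leading" contributions to $S_{\pm n}$ or "lower-order" contributions lying in the claimed modular span, and one has to verify that the Laurent-polynomial factors combine to exactly $A^{\pm(2n+4)}$ rather than some nearby exponent. The target $2n+4$ reflects the inheritance of the framing factor $A^6$ from Part (i) plus one additional $A^2$ for each extra arrow beyond the first, so the inductive step must confirm this one-to-one correspondence between arrow count and exponent increment.
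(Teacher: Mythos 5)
First, a point of comparison: the paper does not prove this lemma at all --- it is imported verbatim as Proposition 4.2 of \cite{KBSM_S1}, so there is no internal argument to measure your attempt against. Judged on its own, your treatment of part (i) is the standard one and is fine in outline: $S_1$ and $S_{-1}$ are both isotopic in $D\times S^1$ to the core circle, with surface framings differing by two, whence the factor $(-A^{3})^{2}=A^{6}$ with no lower-order terms.

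The induction you propose for (ii) and (iii), however, has concrete gaps. (a) Your recursion lowers the arrow count by $2$, so even $n$ requires the base case $n=2$; the statement at $n=0$ is vacuous (the modular span in (iii) already contains $S_0$), so it cannot seed the induction, and $n=2$ is never established. (b) The correction terms coming out of Equation \eqref{eqn_23+} applied to a closed-up unknot are two-component diagrams of the shape $S_1\sqcup S_{n-1}$ and $S_0\sqcup S_{n-2}$. Lemma \ref{lem_popping_arrows} goes the wrong way here (it creates extra components rather than removing them); the relevant tool is Lemma \ref{lem_merging_unknots}, and that lemma returns $S_1\sqcup S_{n-1}\equiv -A^{2}S_n$ modulo unknots with fewer than $n$ arrows. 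Substituting this back into the relation makes the $S_n$ terms cancel identically ($AS_n - A^{-1}\cdot A^{2}S_n=0$), so the would-be recursion degenerates into the tautology that $S_{n-2}$ lies in the span of $S_0,\dots,S_{n-1}$; this route cannot produce the coefficient $A^{2n+4}$. (c) Even granting a usable step-by-two recursion, the factors you name combine to $A^{-2}\cdot A^{2(n-2)+4}\cdot A^{2}=A^{2n}$, not $A^{2n+4}$. (d) For part (iii) the error terms must land among $S_{-(n-1)},\dots,S_{0}$, but your reductions produce positive-index unknots, and converting those requires part (ii) at comparable indices --- a circularity you would need to break with a careful strong induction. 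A correct argument needs a relation coupling $S_{n}$ directly to $S_{n-1}$ and a one-arrowed unknot (so that part (i) feeds the induction one arrow at a time), as in the cited Proposition 4.2 of \cite{KBSM_S1}.
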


\begin{lem}\label{lem_merging_unknots}
Let $a, b \in \Z$ with $ab>0$. Then  
\begin{enumerate}[label=(\roman*)] 	
\item $\includegraphics[valign=c,scale=.5]{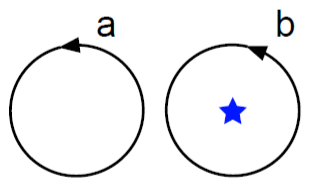} +A^{\frac{2a}{|a|}} \includegraphics[valign=c,scale=.5]{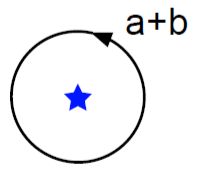} \in \Z[A^{\pm1}] \big\{  \includegraphics[valign=c,scale=.5]{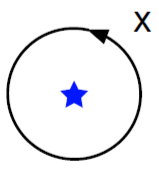} : 0\leq ax, 0\leq |x|<|a|+|b|\big\}$.
\item $\includegraphics[valign=c,scale=.5]{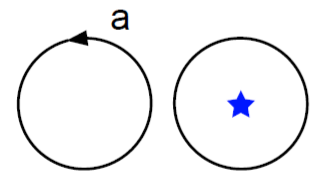} \in \Z[A^{\pm1}] \big\{  \includegraphics[valign=c,scale=.5]{fig_16_x.png} : 0\leq |x|\leq|a|\big\}$.
\end{enumerate}
\end{lem}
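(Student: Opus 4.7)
The plan is to use Equation \eqref{eqn_23+} as the main technical tool, applied in a local window containing one arc from each of the two concentric loops around the star. I would prove Part (ii) first and then use it, together with Equation \eqref{eqn_23+}, in the proof of Part (i). Throughout, I treat the $a,b>0$ case directly; the $a,b<0$ case is analogous, obtained by reversing the orientation of all arrows in the equations used.

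For Part (ii), I induct on $|a|$. With the 0-arrow loop and the $a$-arrow loop playing the role of the two ``strands'' in Equation \eqref{eqn_23+}, a suitable substitution yields a relation expressing $[a,0]$ as a $\Q(A)$-combination of single-loop diagrams $[a]$ and $[a-2]$ (which are already in the desired span, since $|a-2|\leq|a|$) plus a two-loop correction whose strands have strictly smaller total arrow count. Iterating the same identity on the correction terms completes the reduction to single-loop diagrams with $|x|\leq|a|$.

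For Part (i), I induct on $\min(a,b)$. For the base case $\min(a,b)=1$, say $a=1$, substituting $m=b+1$ into Equation \eqref{eqn_23+} and rearranging yields
\[
[1,b] + A^{2}[b+1] \;=\; [b-1] + A^{2}[0,b-1].
\]
The single-loop $[b-1]$ is already of the desired form, and $[0,b-1]$ reduces via Part (ii) to single-loop diagrams $[x]$ with $|x|\leq b-1<1+b$; all residual arrow counts inherit the positive sign of the equation's arrows, giving $x\geq 0$ as required. For the inductive step $\min(a,b)\geq 2$, I apply (an algebraic consequence of) Equation \eqref{eqn_23+} to rewrite $[a,b]$ in terms of $[a+1,b-1]$ together with diagrams having strictly fewer total arrows or a 0-arrow strand; the inductive hypothesis applied to $[a+1,b-1]$, whose $\min$ has dropped by one, yields the result with the coefficient of $[a+b]$ emerging as $A^{2}=A^{2a/|a|}$.

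The main obstacle is bookkeeping the coefficient $A^{2a/|a|}$ through the induction and verifying that the sign constraint $ax\geq 0$ is maintained on the residual single-loop terms. Both points should follow from using the orientation-appropriate version of Equation \eqref{eqn_23+} (positive arrows for $a>0$, negative for $a<0$) and carefully tracking the $A^{\pm 2}$ factors that accumulate at each inductive step.
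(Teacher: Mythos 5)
Your treatment of Part (ii) and your base case identity $[1,b]+A^{2}[b+1]=[b-1]+A^{2}[0,b-1]$ are both fine (the latter really is Equation \eqref{eqn_23+} with $m=b+1$, multiplied by $A$). The gap is in the inductive step of Part (i), which is exactly where the paper's proof does its real work. Write $[a,b]$ for the two-component diagram and $[x]$ for the single loop with $x$ arrows. You claim that ``an algebraic consequence of Equation \eqref{eqn_23+}'' rewrites $[a,b]$ in terms of $[a+1,b-1]$ plus controlled debris. First, Equation \eqref{eqn_23+} only ever involves an auxiliary loop carrying $0$ or $1$ arrows, so no algebraic manipulation of that single relation produces a configuration in which that component carries $a+1\geq 2$ arrows; the arrow-transfer identity you need is a separate local K1/R4 computation (it is Lemma \ref{lem_pushing_arrows} in the paper, not a consequence of \eqref{eqn_23+}). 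Second, and more seriously, even granting that identity the induction does not close. Lemma \ref{lem_pushing_arrows}(ii) gives $[a,b]=A^{-2}[a+1,b-1]+[b-a-2]-A^{-2}[b-a]$, hence
\[
[a,b]+A^{2}[a+b]=A^{-2}\bigl([a+1,b-1]+A^{2}[a+b]\bigr)+(A^{2}-1)[a+b]+[b-a-2]-A^{-2}[b-a],
\]
and the leftover term $(A^{2}-1)[a+b]$ carries exactly $|a|+|b|$ arrows, which the target span explicitly excludes; the coefficient of the companion term is $A^{4}$ where the inductive hypothesis needs $A^{2}$. Moreover the debris $[b-a-2]$ can have sign opposite to $a$ (take $a=3$, $b=4$, giving $[-1]$), violating the constraint $ax\geq 0$.

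The paper avoids all of this by deriving, from R4 together with one K1 resolution of a crossing between the two components, the four-term relation $[a,x+1]+A^{2}[a+x+1]=A^{2}[a-1,x]+[a+x-1]$. This relation is tailored to the statement: the single-loop companion appears with coefficient exactly $A^{2}$, both arrow counts drop together, so the recursion is $(a,b)\to(a-1,b-1)$ (cancelling a pair of arrows rather than transferring one), it terminates at Part (ii) when an index reaches $0$ (or at $b=1$), and the single-loop debris $[a+x-1]$ automatically has a nonnegative index strictly smaller than $|a|+|b|$. (Your bookkeeping also has the recursion $(a,b)\to(a+1,b-1)$ terminating at $b=1$ while your base case is stated at $a=1$, and $\min(a,b)$ does not in fact drop at each such step, e.g.\ $(2,5)\to(3,4)$.) To salvage your outline you would need to replace the transfer step by the paper's cancellation relation, or re-derive it from scratch; as written, the inductive step fails.
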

\begin{proof}
Suppose first that $a, b>0$. Using R4 we obtain $\includegraphics[valign=c,scale=.5]{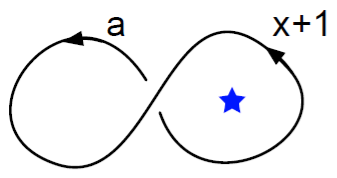} = \includegraphics[valign=c,scale=.5]{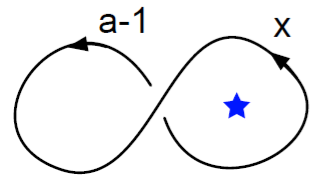}$. Thus, 
\begin{equation}
\includegraphics[valign=c,scale=.5]{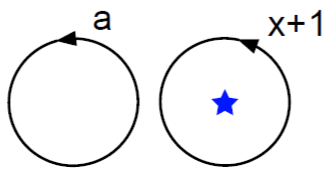}  +A^2 \includegraphics[valign=c,scale=.5]{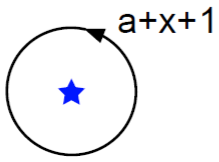} = A^2 \includegraphics[valign=c,scale=.5]{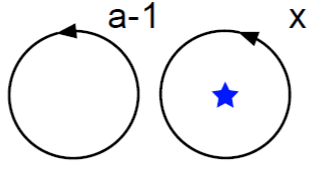} + \includegraphics[valign=c,scale=.5]{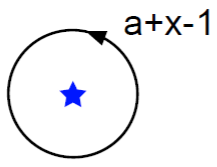}.
\end{equation}
By setting $x=0$, the statement follows for $b=1$ and all $a\geq 1$. For general $b\geq 1$, we proceed by induction on $a\geq 1$ setting $x=b$ in the equation above. 
The proof of case $ab<0$ uses the equation above after the change of variable $a=-a'+1$ and $x=-x'$. Part (ii) follows from Equation \eqref{eqn_23} with $n=a$.
\end{proof}

\begin{lem} \label{lem_pushing_arrows}
For all $a,b\in \Z$, 
\begin{align*}
&\text{(i) }
 \includegraphics[valign=c,scale=.35]{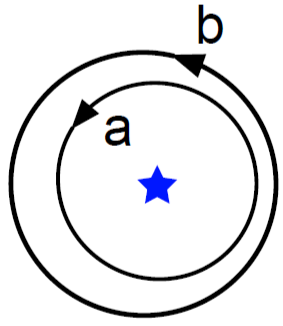} = A^2 \includegraphics[valign=c,scale=.35]{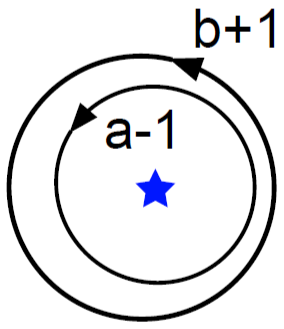} + \includegraphics[valign=c,scale=.45]{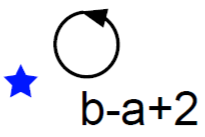} - A^2 \includegraphics[valign=c,scale=.45]{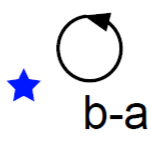}.\\
&\text{(ii) }
 \includegraphics[valign=c,scale=.35]{pushing_p_a_b.png} = A^{-2} \includegraphics[valign=c,scale=.35]{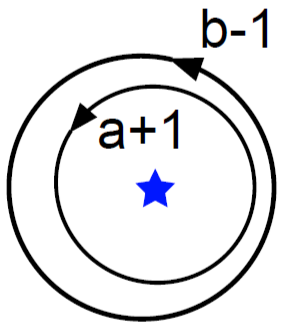} + \includegraphics[valign=c,scale=.45]{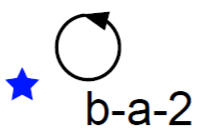} - A^{-2} \includegraphics[valign=c,scale=.45]{pushing_p_b-a.png}. \\
&\text{(iii) }
\includegraphics[valign=c,scale=.35]{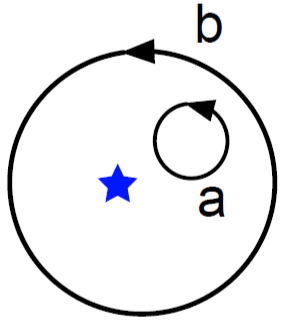} = A^2 \includegraphics[valign=c,scale=.35]{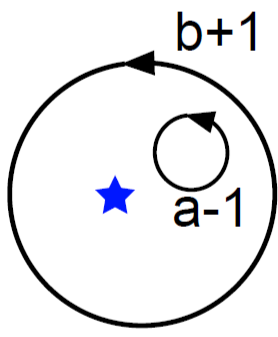} + \includegraphics[valign=c,scale=.35]{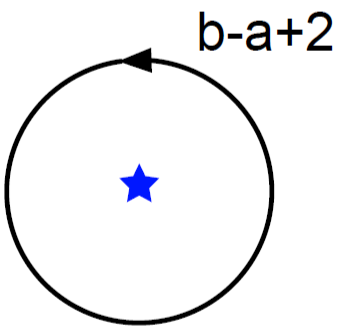} - A^2 \includegraphics[valign=c,scale=.35]{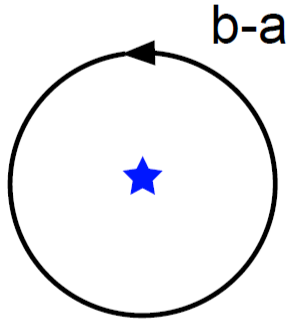}.\\
&\text{(iv) }
\includegraphics[valign=c,scale=.35]{pushing_n_a_b.png} = A^{-2} \includegraphics[valign=c,scale=.35]{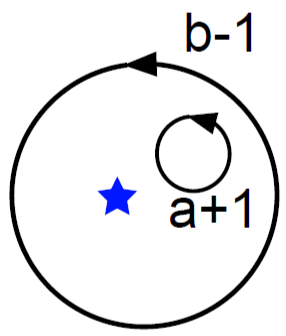} + \includegraphics[valign=c,scale=.35]{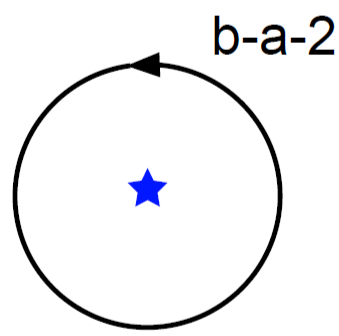} - A^{-2} \includegraphics[valign=c,scale=.35]{pushing_n_b-a.png}.\\
\end{align*}
\end{lem}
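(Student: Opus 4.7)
The plan is to derive each of (i)--(iv) from a single application of the Kauffman skein relation (K1) at a crossing inserted between the two strands, combined with the arrow-sliding moves R4 and R5 and the arrow-counting identity \eqref{eqn_23+}.

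For part (i), I would start from the parallel configuration on the LHS carrying arrows $a$ and $b$, then locally isotope the inner strand to introduce a crossing with the outer strand (an R2-type insertion). Applying (K1) at this crossing gives two resolutions. The first smoothing, after using R4 to slide the arrows into their canonical positions, recovers a parallel configuration in which one arrow has effectively been pushed from the $a$-strand onto the $b$-strand, yielding the term $A^2 P(a-1,b+1)$; the overall factor of $A^2$ comes from the (K1)-coefficient together with the framing adjustment needed to remove the residual R1 kink. The other smoothing fuses the two parallel strands into a single loop; because the two strands are traversed in opposite directions after fusion, the arrow counts combine as $b-a$. A second framing/Reidemeister correction on this fused loop, handled by the identity \eqref{eqn_23+}, produces precisely the combination $C(b-a+2) - A^2 C(b-a)$. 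Assembling the three contributions gives exactly (i).

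Parts (ii)--(iv) then follow by the same recipe with cosmetic changes. Part (ii) uses the crossing of opposite sign, so the roles of the $A$- and $A^{-1}$-smoothings are swapped: the parallel term becomes $A^{-2} P(a+1,b-1)$, and the fused single-loop correction becomes $C(b-a-2) - A^{-2} C(b-a)$. Parts (iii) and (iv) replace the parallel configuration by the nested one, but the inserted crossing is still local, so the computation is identical and yields the stated identities verbatim.

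The main obstacle will be bookkeeping: correctly determining the arrow count on the fused loop from the orientations of the two strands in the pictures, and tracking which framing/writhe corrections (via R1 and Lemma \ref{lem_flip_unknot}) produce the shifts $b-a \mapsto b-a \pm 2$ with the correct $A^{\pm 2}$ weights. Once the figure conventions are fixed, however, each identity reduces to a single application of (K1) plus the now-standard normalization of arrows on a single loop, so the proof is essentially a direct verification rather than an induction.
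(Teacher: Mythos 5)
Your overall strategy --- put a crossing between the two strands, resolve it with (K1), and use the arrow moves to sort out the resulting diagrams --- is the same one the paper uses (its entire proof is ``use (K1) on the LHS to create a new crossing; the result follows from (R4)''). However, the mechanism you describe for producing the coefficients does not work, and that is exactly where the content of the lemma sits. A single application of (K1) at one crossing yields precisely two terms, with coefficients $A$ and $A^{-1}$; it cannot by itself produce a three-term right-hand side weighted by $A^{2}$, $1$, and $-A^{2}$. The corrections you invoke to bridge this are not available here: smoothing a crossing between two distinct concentric components never creates an R1 kink (both smoothings are already embedded multicurves), so there is no writhe adjustment contributing $A^{\pm 2}$ to the parallel term; and Equation \eqref{eqn_23+} is the relation for merging an arrowed unknot with a strand (it is what drives Lemma \ref{lem_popping_arrows}), not a framing correction on a fused loop, so it cannot manufacture the combination $C(b-a+2)-A^{2}C(b-a)$ out of a single merged circle. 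Note also that an R2-type insertion creates \emph{two} crossings, and resolving both simply returns the original diagram.

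The correct bookkeeping is to resolve the \emph{same} one-crossing diagram twice and eliminate it. Writing $P(a,b)$ for the two-strand configuration and $C(k)$ for the single merged loop with $k$ arrows, part (i) is equivalent to
\begin{equation*}
A^{-1}P(a,b) + A\,C(b-a) \;=\; A\,P(a-1,b+1) + A^{-1}C(b-a+2),
\end{equation*}
which is visibly ``resolution of a crossing diagram $X$ in one presentation equals its resolution in another.'' Concretely: $X$ with arrows $(a,b)$ resolves as $A\,C(b-a)+A^{-1}P(a,b)$ (your observation that the arrow counts subtract on the fused loop, because one strand reverses orientation, is correct); the moves (R4)/(R5) identify $X$ with the diagram carrying arrows $(a-1,b+1)$ and the crossing changed, whose resolution is $A\,P(a-1,b+1)+A^{-1}C(b-a+2)$; equating and solving for $P(a,b)$ gives (i), and the other three parts are the symmetric choices of crossing sign and of nesting. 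So the issue is not deferred bookkeeping: without the second resolution and the elimination of $X$, no assignment of framing factors to a single (K1) expansion can yield the stated identity.
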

\begin{proof}
One can use (K1) on the LHS of each equation to create a new croossing. The result follows from (R4). 
\end{proof}

\begin{lem} \label{lem_crossing_borders}
$\quad$
\[ \includegraphics[valign=c,scale=.55]{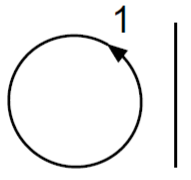} = -A^{4} \includegraphics[valign=c,scale=.55]{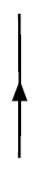} - A^{2} \includegraphics[valign=c,scale=.55]{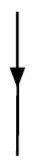}.\]
\end{lem}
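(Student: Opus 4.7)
The text explicitly notes that Lemma \ref{lem_crossing_borders} is ``an explicit case of Equation \eqref{eqn_23},'' so the natural approach is to derive it directly from the skein relation K1 (the same relation that gives Equation \eqref{eqn_23}) together with a framing correction coming from an R1' move. The pattern of the coefficients strongly suggests this: $-A^4$ and $-A^2$ differ by a factor of $A^2$, matching the ratio $A / A^{-1}$ that K1 produces on the two smoothings of a crossing, and each coefficient carries an extra factor of $-A^3$, which is precisely what R1' contributes when a single kink is removed from a curve.

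My plan is as follows. First, I would identify inside the diagram fig\_18\_a a single crossing (or a configuration that produces one after an R4 rearrangement of the arrows), and apply K1 to that crossing. This yields
\[
\includegraphics[valign=c,scale=.55]{fig_18_a.png} \;=\; A \cdot (\text{smoothing 1}) \;+\; A^{-1} \cdot (\text{smoothing 2}).
\]
Second, I would trace how the arrows redistribute in each resolution and verify that each smoothing produces a kink whose removal by R1' is valid. Third, I would apply R1' to each smoothing, each step contributing a framing factor of $-A^3$. Multiplying through, the coefficients become $A \cdot (-A^3) = -A^4$ and $A^{-1}\cdot (-A^3) = -A^2$, matching the lemma once the kink-free diagrams are identified with fig\_18\_b and fig\_18\_c respectively.

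As a secondary route in case the above geometry is awkward to realize, one could instead start from Equation \eqref{eqn_23} with a well-chosen $n$ for which two of its four terms coincide under R4 (collapsing the four-term equation to the three terms pictured in the lemma), then rearrange and multiply through by an appropriate power of $A$ to isolate fig\_18\_a.

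The main obstacle is a bookkeeping check: one must confirm that the two K1 smoothings produce kinks of the \emph{same} sign, so that both R1' applications contribute $-A^3$ and not one $-A^3$ and one $-A^{-3}$. In general, the two smoothings of a crossing twist the incident arcs in opposite ways, so getting matching curl signs depends on the specific placement of arrows along the two strands meeting at the crossing in fig\_18\_a. This is immediate by visual inspection of the figure but is the delicate point that makes this particular configuration ``cross the border'' in the sense of the lemma's name.
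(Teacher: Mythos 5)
Your fallback route is, in essence, the paper's actual proof: Lemma \ref{lem_crossing_borders} is obtained by rotating Equation \eqref{eqn_23} by $180$ degrees and setting $n=1$. But the mechanism that collapses the four terms of Equation \eqref{eqn_23} to the three terms of the lemma is not that two terms coincide under $R_4$. At $n=1$ the left-hand term carrying the coefficient $A^{-1}$ contains a trivial circle with \emph{no} arrows, and relation (K2) evaluates it to $(-A^{2}-A^{-2})$ times the first left-hand term; the $-A^{2}$ part cancels the $A\cdot(\,\cdot\,)$ term exactly, leaving $-A^{-3}$ times the diagram of fig\_18\_a equal to the two surviving right-hand terms $A^{-1}(\,\cdot\,)+A(\,\cdot\,)$. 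Multiplying through by $-A^{3}$ produces the coefficients $-A^{2}$ and $-A^{4}$. So the entire ``framing correction'' enters exactly once, as a single factor $-A^{-3}$ attached to fig\_18\_a, and it is not a Reidemeister move at all but the (K1)+(K2) computation that evaluates a kink.

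This is where your primary route misdescribes the geometry. When you resolve the crossing of a kink with (K1), neither smoothing retains a kink: one smoothing straightens the strand, and the other detaches the small loop as a disjoint circle, which in the arrowed setting may carry an arrow and is precisely the extra arrowed unknot appearing in one of the right-hand diagrams of the lemma (this is why the relation has three genuinely distinct terms instead of reducing to a scalar). The step ``apply R1$'$ to each smoothing, each contributing $-A^{3}$'' therefore has no geometric counterpart, and the delicate point you flag --- whether the two kinks have matching signs --- is moot, because there are no residual kinks after smoothing. Your arithmetic $A\cdot(-A^{3})=-A^{4}$, $A^{-1}\cdot(-A^{3})=-A^{2}$ lands on the correct coefficients only because it is algebraically equivalent to moving the single factor $-A^{-3}$ from one side of the equation to the other. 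I would drop the primary route and write up the secondary one, making the (K2) cancellation explicit.
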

\begin{proof}
Rotate Equation \eqref{eqn_23} by 180 degrees and set $n=1$.
\end{proof}


\section{Planar case}\label{section_planar_case}
Fix a planar subsurface $\Sigma' \subset \Sigma$ with at least 4 boundary components. The goal of this section is to prove Proposition \ref{thm_planar_case} which states that $\mathcal{S}(\Sigma'\times S^1)$ is generated by arrowed diagrams with $\partial$-parallel arrowed curves only. In particular, the dimension of $\mathcal{S}(\Sigma'\times S^1)$ as a module over its boundary is one; generated by the empty link.

We will study diagrams in linear pants decompositions. These are pants decompositions for $\Sigma'$ with dual graph isomorphic to a line. See Figure \ref{fig_linear_pants} for a concrete picture. Linear decompositions have $N=|\chi(\Sigma')|\geq 2$ pairs of pants. By fixing a linear pants decomposition, there is a well-defined notion of left and right ends of $\Sigma'$. We denote the specific curves of a linear pants decomposition as in Figure \ref{fig_linear_pants}. 
We think of such decomposition as the planar analogues for the sausague decompositions of positive genus surfaces in \cite{KBSM_S1}. 
\begin{figure}[h]
\centering
\includegraphics[scale=.5]{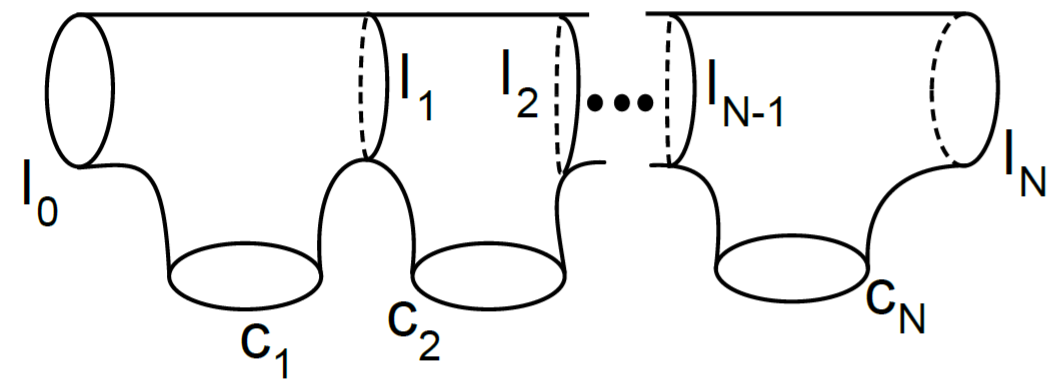}
\caption{Linear pants decomposition for spheres with holes.}
\label{fig_linear_pants}
\end{figure}

The \textbf{main idea of Proposition \ref{thm_planar_case}} is to `push' loops parallel to $l_i$ in a linear pants decomposition towards the boundary of $\partial \Sigma$ in both directions. We do this with the help of the $\Delta$-maps from Definition \ref{defn_delta}; $\Delta_+$ `pushes' loops towards the left and $\Delta_-$ towards the right (see Lemma \ref{equation_1_planar}). This idea is based on Section 3.3 of \cite{KBSM_S1} where the authors concluded a version of Proposition \ref{thm_planar_case} for closed surfaces. The following definition helps us to keep track of the arrowed curves in the boundary. 

\begin{defn}[Diagrams in linear pants decompositions]\label{defn_linear_pants}
Fix a linear pants decomposition of $\Sigma'$ and integers $m\geq 0$, $k_0 \in \{1, \dots, N-1\}$. For each $k\in \{0,\dots, N\} - \{k_0\}$, $a\in \Z$, and $v\in \{0,1,\emptyset\}^N$, we define the arrowed multicurves $D^k_{a,v}$ in $\Sigma'$ as follows: $D^{k}_{a,v}$ has one copy of $l_k$ with $a$ arrows, $m$ copies of $l_{k_0}$with no arrows, and one copy of $c_i$ with $v_i$ arrows if $v_i=0,1$ and no curve $c_i$ if $v_i=\emptyset$. Notice that the positive direction of the arrows of the curves $c_i$ depends on the (left/right) position of $c_i$ with respect to $l_{k_0}$. 
If $k=k_0$, we define ${}_lD^{k_0}_{a,v}$ (resp. ${}_rD^{k_0}_{a,v}$) as before with the condition that the left-most (resp. right-most) copy of $l_{k_0}$ contains $a$ arrows. 
\end{defn}

\begin{figure}[h]
\centering
\includegraphics[scale=.65]{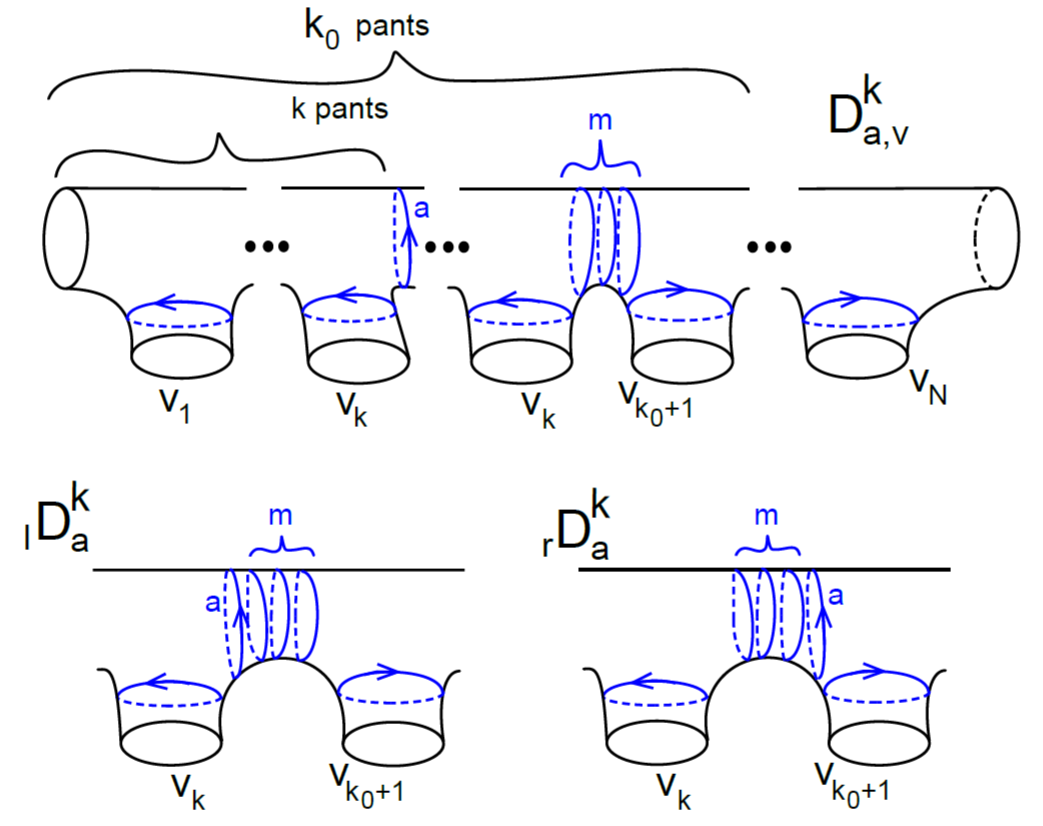}
\caption{Definition of $D^k_{a,v}$, ${}_lD^{k_0}_{a,v}$ and ${}_rD^{k_0}_{a,v}$.}
\label{fig_Dav}
\end{figure}

\begin{lem}[Lemma 3.11 of \cite{KBSM_S1}]\label{lem_3.11}
The following holds for any two parallel curves, 
\[ A^{-1}\includegraphics[valign=c,scale=.4]{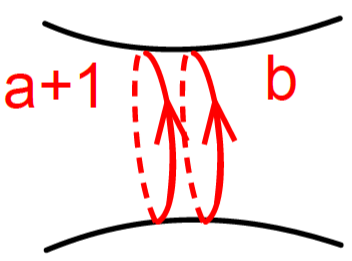} + A \includegraphics[valign=c,scale=.4]{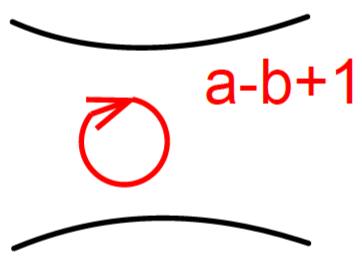} =  A \includegraphics[valign=c,scale=.4]{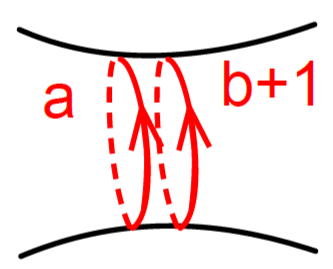} + A^{-1} \includegraphics[valign=c,scale=.4]{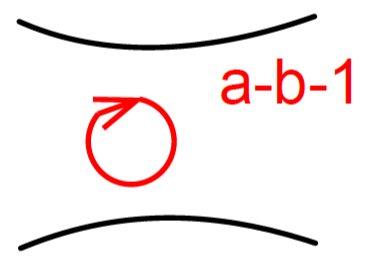}.\]

In particular, for any $a \in \Z$, $m\geq 0$, and $v\in \{0,1,\emptyset\}^N$, we have 
\[ {}_lD^{k_0}_{a,v} \cong A^{2ma} {}_rD^{k_0}_{a,v}, \]
modulo $\Z[A^{\pm1}]$-linear combinations of diagrams with fewer non-trivial loops. 
\end{lem}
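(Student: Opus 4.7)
The first step is to establish the skein identity of Part~(i). The plan is to view it as a local relation on a disk straddling two parallel arcs: pull one strand across the other via an isotopy, creating a single crossing, and apply (K1). One smoothing of the crossing retains two parallel arcs (with arrow counts shifted by R4), while the other smoothing merges them into a single arc whose arrow count is the signed sum of the two original counts (this is where the single-curve $a - b \pm 1$ terms come from). Setting up the crossing in two R4-equivalent positions and equating the resulting (K1) expansions should directly yield the four-term identity. Equivalently, one can derive it by adding appropriate variants of Equation~\eqref{eqn_23} after shifting arrows across the local picture with R4, or by combining parts of Lemma~\ref{lem_pushing_arrows}; the key point is that this is a purely local identity supported in a pair of pants.

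The corollary is proved by induction on $m$. The base case $m=0$ is trivial since ${}_l D^{k_0}_{a,v} = {}_r D^{k_0}_{a,v}$ and $A^{2\cdot 0 \cdot a}=1$. For the inductive step, apply Part~(i) in the annulus bounded by two adjacent parallel copies of $l_{k_0}$ — the left one carrying some arrows, the right one carrying zero. With $b=0$, the identity rearranges to
\[
D(a,0) \;\equiv\; A^{2}\, D(a-1, 1) \pmod{\text{fewer non-trivial loops}},
\]
because each merged single-curve term reduces the number of essential $l_{k_0}$ copies by one. Iterating this transfer $a$ times moves all $a$ arrows from the left copy to its right neighbor at a cumulative cost of $A^{2a}$, still modulo fewer non-trivial loops. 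Composing this ``arrow transfer'' across each of the $m$ annular gaps between the $m+1$ copies of $l_{k_0}$ accumulates the total factor $A^{2ma}$, giving the claimed congruence.

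The main obstacle I anticipate is the bookkeeping of error terms. Each application of Part~(i) produces merged single-curve contributions, and I need to confirm these strictly decrease the number of non-trivial (essential, non-$\partial$-parallel) loops relative to the starting diagram, so they may be absorbed into the ``fewer non-trivial loops'' bin. Locality of (K1) and R4 ensures the other curves $l_k$ ($k \neq k_0$) and the boundary-parallel $c_i$ are untouched, so the real verification is that merging two parallel essential copies of $l_{k_0}$ inside their bounding annulus yields a diagram with strictly fewer essential $l_{k_0}$ copies — which holds because the merged curve is isotopic either to a boundary of the annulus (hence trivial or boundary-parallel in $\Sigma'$) or to a single copy of $l_{k_0}$, in either case reducing the essential $l_{k_0}$ count. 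Once this reduction is in place, the inductive transfer runs without interference and the factor $A^{2ma}$ emerges cleanly.
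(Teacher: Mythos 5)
The paper does not prove this lemma; it is imported verbatim as Lemma 3.11 of \cite{KBSM_S1}, so there is no in-paper proof to compare against. Your reconstruction is nonetheless correct and is the standard derivation: the four-term identity comes from applying (K1) to a single crossing whose arrow placement can be shifted by R4 (exactly the mechanism of Equation \eqref{eqn_23}), and the congruence ${}_lD^{k_0}_{a,v}\cong A^{2ma}\,{}_rD^{k_0}_{a,v}$ follows by iterating $D(a,0)\equiv A^2 D(a-1,1)$ across each of the $m$ gaps (for $a<0$ one iterates the inverse relation, which the statement's ``for any $a\in\Z$'' requires). Your only hedge --- the nature of the merged single-curve terms --- resolves in your favor: the cup--cap smoothing of two concentric copies of $l_{k_0}$ is null-homologous in the annulus between them and hence bounds a disk there, so each debris term trades two essential loops for one trivial circle and strictly decreases the non-trivial loop count.
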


Lemma \ref{lem_3.13} allows us to change the location of the $a$ arrows in the diagram $D$ at the expense of changing the vector $v$. Its proof follows from Proposition 3.5 of \cite{KBSM_S1}. 

\begin{lem}\label{lem_3.13}
The following equations hold for $k\in \{1,\dots, N-1\}$. 
\begin{enumerate}[label=(\roman*)] 
\item If $k>k_0$, then 
\[ A D^k_{a,(\dots, v_k, \emptyset ,\dots)} - A^{-1}D^k_{a+2,(\dots, v_k, \emptyset, \dots)} = A D^{k+1}_{a+1,(\dots, v_k, 1, \emptyset,\dots)} - A D^{k+1}_{a,(\dots, v_k, 0, \emptyset,\dots)}. \]
\item If $k=k_0$, then 
\[ A {}_r D^{k_0}_{a,(\dots, v_k, \emptyset,\dots)} - A^{-1} {}_r D^{k_0}_{a+2,(\dots, v_k, \emptyset,\dots)} = A D^{k_0+1}_{a+1,(\dots, v_k, 1, \emptyset,\dots)} - A^{-1} D^{k_0+1}_{a,(\dots, v_k, 0, \emptyset,\dots)}. \]
\item If $k = k_0$, then 
\[  A {}_l D^{k_0}_{a+2,(\dots, \emptyset, v_{k_0}, \dots)} - A^{-1} {}_l D^{k_0}_{a,(\dots, \emptyset, v_{k_0}, \dots)} = A D^{k_0-1}_{a,(\dots,\emptyset, 0, v_{k_0}, \dots)} - A^{-1} D^{k_0-1}_{a+1,(\dots,\emptyset, 1, v_{k_0}, \dots)}. \]
\item If $k < k_0$, then 
\[  A D^{k}_{a+2,(\dots, \emptyset, v_{k}, \dots)} - A^{-1} D^{k}_{a,(\dots, \emptyset, v_{k}, \dots)} = A D^{k-1}_{a,(\dots,\emptyset, 0, v_{k}, \dots)} - A^{-1} D^{k-1}_{a+1,(\dots,\emptyset, 1, v_{k}, \dots)}. \]
\end{enumerate}
\end{lem}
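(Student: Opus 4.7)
The plan is to derive all four identities from a single local manipulation inside a pair of pants, in direct parallel with the derivation of Equation~\eqref{eqn_23}. For each case, I would select the pair of pants $P$ in the linear decomposition that has $l_k$ as one of its boundary components, on the appropriate side: for parts (i) and (ii), $P$ is the pants with boundary $l_k \cup l_{k+1} \cup c_{k+1}$; for parts (iii) and (iv), $P$ has boundary $l_{k-1} \cup l_k \cup c_k$ (with $k$ replaced by $k_0$ in (iii)). The $\emptyset$ assumptions on the coordinate of $v$ adjacent to $v_k$ guarantee that no copy of $c_{k \pm 1}$ lies inside $P$, so the entire local picture in $P$ consists of a single arc parallel to $l_k$ carrying $a$ arrows (with $m-1$ additional parallel copies present in the special cases (ii) and (iii), but displaced from the arc we act on).

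The central step is the following: the $a$-arrowed arc parallel to $l_k$ can be isotoped across $P$ into a configuration hugging $l_{k \pm 1} \cup c_{k \pm 1}$. In this form, distributing the arrows as $(a,0)$ and as $(a-1,1)$ on the two portions yields isotopic arrowed diagrams. Introducing a single crossing between the two portions and resolving via (K1) in both configurations, then equating the two outputs, produces exactly the four-term relation stated, with the LHS corresponding to the two diagrams $D^k_{a,v}$ and $D^k_{a+2,v}$ and the RHS corresponding to the two diagrams obtained by moving the arrows onto $l_{k \pm 1}$ and $c_{k \pm 1}$. This is precisely the mechanism behind Equation~\eqref{eqn_23}, now read inside $P$. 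Parts (ii) and (iii) are handled identically, with the added step of using Lemma~\ref{lem_3.11} to single out the right-most (respectively left-most) copy of $l_{k_0}$ among the $m$ parallel copies and push only that one across $P$. Part (iv) is the mirror image of (i) across $l_{k_0}$.

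The main obstacle will be the sign bookkeeping imposed by Definition~\ref{defn_linear_pants}: since the positive arrow direction on curves $c_i$ depends on whether $c_i$ lies to the left or right of $l_{k_0}$, the coefficients coming from (K1) must be tracked carefully through the isotopy, and I expect this to account for the asymmetry between the coefficients in (i) versus those in (ii)--(iv). A secondary concern is confirming that the crossing introduced in $P$ does not interact with the other components of the multicurve---the remaining copies of $l_{k_0}$ or the $c_i$ with $v_i \ne \emptyset$---which is exactly what the $\emptyset$ conditions in the statement are there to rule out.
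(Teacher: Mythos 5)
Your derivation is correct and is essentially the argument the paper relies on: the paper disposes of Lemma \ref{lem_3.13} by citing Proposition 3.5 of \cite{KBSM_S1}, whose content is exactly the pair-of-pants skein manipulation you describe (the same mechanism that produces Equation \eqref{eqn_23} and Lemma \ref{lem_crossing_borders}). The only caveat is that you defer the coefficient and arrow-orientation bookkeeping rather than carrying it out, but the strategy, the role of the $\emptyset$ entries, and the treatment of the extra parallel copies of $l_{k_0}$ in cases (ii)--(iii) all match the intended proof.
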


\begin{defn}[$\Delta$-maps]\label{defn_delta}
Following \cite{KBSM_S1}, let $V^{\partial \Sigma'}$ be the subspace of $\mathcal{S}(\Sigma'\times S^1)$ generated by arrowed diagrams with trivial loops and boundary parallel curves in $\Sigma'$. Consider $V$ to be the formal vector space over $\Q(A)$ spanned by the diagrams $D^k_{a,v}$, ${}_l D^{k_0}_{a,v}$ and ${}_r D^{k_0}_{a,v}$ for all $a\in \Z$, $v\in \{0,1,\emptyset\}^{N}$ and $k\in \{0,\dots, N\} \setminus \{k_0\}$. Define the linear map $s: V\ra V$ given by $s(D^k_{a,v}) = D^k_{a+2,v}$ (similarly for ${}_l D^{k_0}_{a,v}$ and ${}_r D^{k_0}_{a,v}$). Define the maps $\Delta_-, \Delta_+$, and $\Delta_{+, m}$ by
\[ \Delta_- = A-A^{-1}s, \quad \Delta_+=As-A^{-1}, \quad \Delta_{+,m} = A^{4m+1}s - A^{-1}.\]
\end{defn}
Combinations of $\Delta$-maps, together with Lemmas \ref{equation_1_planar} and \ref{lem_3.14}, will show that $V\subset V^{\partial \Sigma'}$.

\begin{lem}\label{equation_1_planar}
Let $o(e)$ and $z(e)$ be the number of ones and zeros of a vector $e\in \{0,1\}^n$. 
\begin{enumerate}[label=(\roman*)] 
\item The following equation holds for all $1\leq n \leq k_0$. 
\begin{equation}\label{eqn_1_planar}
\Delta^n_+ \left( {}_l D^{k_0}_{a, (\dots, \emptyset, \dots)} \right)= \sum_{e \in \{0,1\}^n} (-1)^{o(e)} A^{z(e) - o(e)} D^{k_0 - n}_{a+o(e), (\dots, \emptyset, e, \emptyset, \dots)}, \end{equation}
where $e=(e_1, \dots, e_n)$ is located so that $v_{k_0} = e_n$. 
\item The following equation holds for all $1\leq n \leq N-k_0$. 
\begin{equation}\label{eqn_2_planar}
\Delta^n_- \left( {}_r D^{k_0}_{a, (\dots, \emptyset, \dots)} \right)= \sum_{e \in \{0,1\}^n} (-1)^{z(e)} A^{o(e) - z(e)} D^{k_0 + n}_{a+o(e), (\dots, \emptyset, e, \emptyset, \dots)}, 
\end{equation}
where $e=(e_1, \dots, e_n)$ is located so that $v_{k_0} = e_1$. 
\end{enumerate}
\end{lem}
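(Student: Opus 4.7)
The plan is to prove both parts by induction on $n$. The two parts are symmetric (one pushes arrows to the left of $l_{k_0}$, the other to the right), so I will focus on (i); part (ii) follows by the mirror argument with Lemma~\ref{lem_3.13}(ii), (i) playing the roles of Lemma~\ref{lem_3.13}(iii), (iv).

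For the base case $n=1$, Definition~\ref{defn_delta} unfolds to
\[ \Delta_+\bigl({}_l D^{k_0}_{a,(\ldots,\emptyset,\ldots)}\bigr) = A \cdot {}_l D^{k_0}_{a+2,(\ldots,\emptyset,\ldots)} - A^{-1} \cdot {}_l D^{k_0}_{a,(\ldots,\emptyset,\ldots)}, \]
which is precisely the LHS of Lemma~\ref{lem_3.13}(iii) with $v_{k_0} = \emptyset$. The RHS of that lemma is $A\, D^{k_0-1}_{a,(\ldots,\emptyset,0,\emptyset,\ldots)} - A^{-1} D^{k_0-1}_{a+1,(\ldots,\emptyset,1,\emptyset,\ldots)}$, which is exactly the $n=1$ case of Equation~\eqref{eqn_1_planar}: the summand $e=(0)$ contributes the first term, and $e=(1)$ the second.

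For the inductive step, assume Equation~\eqref{eqn_1_planar} holds for some $1 \leq n < k_0$ and apply $\Delta_+$ to both sides. Each summand on the right has the form $D^{k_0-n}_{a+o(e),(\ldots,\emptyset,e,\emptyset,\ldots)}$ with $e$ occupying slots $v_{k_0-n+1},\ldots,v_{k_0}$, so the slot $v_{k_0-n-1}$ is $\emptyset$ and Lemma~\ref{lem_3.13}(iv) (with $k=k_0-n<k_0$) applies to give
\[ \Delta_+\bigl(D^{k_0-n}_{a+o(e),(\ldots)}\bigr) = A\, D^{k_0-n-1}_{a+o(e),(\ldots,\emptyset,0,e,\emptyset,\ldots)} - A^{-1}\, D^{k_0-n-1}_{a+o(e)+1,(\ldots,\emptyset,1,e,\emptyset,\ldots)}. \]
Writing $e' = (e_0, e) \in \{0,1\}^{n+1}$, the branch $e_0 = 0$ contributes $A \cdot (-1)^{o(e)} A^{z(e)-o(e)} = (-1)^{o(e')} A^{z(e')-o(e')}$ at arrow count $a + o(e')$ (using $o(e')=o(e)$ and $z(e')=z(e)+1$); the branch $e_0 = 1$ contributes $-A^{-1} \cdot (-1)^{o(e)} A^{z(e)-o(e)} = (-1)^{o(e')} A^{z(e')-o(e')}$ at arrow count $a + o(e')$ (using $o(e')=o(e)+1$ and $z(e')=z(e)$). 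Summing over all $e' \in \{0,1\}^{n+1}$ reproduces the formula for $n+1$, closing the induction.

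The argument is essentially combinatorial rather than geometric: all of the topological content is already packaged in Lemma~\ref{lem_3.13}, so the only work is careful bookkeeping with signs, powers of $A$, and the convention for where the new bit $e_0$ is inserted inside the vector $v$. I expect no genuine obstacle; the mild subtlety to watch is that the range $n \leq k_0$ (resp.\ $n \leq N - k_0$ for part (ii)) is exactly what keeps the intermediate indices $k_0 - n$ (resp.\ $k_0 + n$) inside the range where Lemma~\ref{lem_3.13}(iv) (resp.\ (i)) applies, so no boundary case obstruction arises.
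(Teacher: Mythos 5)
Your proof is correct and follows essentially the same route as the paper's: the base case is Lemma~\ref{lem_3.13}(iii), and the inductive step applies $\Delta_+ = As - A^{-1}$ to the sum and converts each pair $A\,D^{k_0-n}_{a+o(e)+2,\cdot} - A^{-1}D^{k_0-n}_{a+o(e),\cdot}$ via Lemma~\ref{lem_3.13}(iv), with the same sign and exponent bookkeeping for the new bit $e_0$. The paper likewise leaves part (ii) to the symmetric argument, so there is nothing to add.
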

\begin{proof} 
We now prove Equation \eqref{eqn_1_planar}. The proof of Equation \eqref{eqn_2_planar} is symmetric and it is left to the reader. 
Lemma \ref{lem_3.13} with $k=k_0$ is the statement of case $n=1$. We proceed by induction on $n$ and suppose that Equation \eqref{eqn_1_planar} holds for some $1\leq n \leq k_0-1$. Using Lemma \ref{lem_3.13} with $k<k_0$, we show the inductive step as follows, 

\begin{align*} 
\Delta^{n+1}_{+}\left( {}_l D^{k_0}_{a, \emptyset}\right) = &  \left(As - A^{-1}\right) \circ \Delta^{n}_{+} \left( {}_l D^{k_0}_{a, \emptyset}\right) \\ 
 = & \sum_{e \in \{0,1\}^n}  (-1)^{o(e)} A^{1+z(e) - o(e)}D^{k_0 - n}_{a+2+o(e), (\dots, \emptyset, e, \emptyset, \dots)} \\ & \quad \quad \quad \quad \quad \quad \quad \quad \quad  \quad \quad  \quad -(-1)^{o(e)} A^{-1 + z(e) -o(e)} D^{k_0 - n}_{a+o(e), (\dots, \emptyset, e, \emptyset, \dots)}  \\
 = & \sum_{e\in \{0,1\}^n} (-1)^{o(e)} A^{z(e) -o(e)} \left[ AD^{k_0 - n}_{a+o(e)+2, (\dots, \emptyset, e, \emptyset, \dots)} - A^{-1} D^{k_0 - n}_{a+o(e) , (\dots, \emptyset, e, \emptyset, \dots)} \right] \\ 
 = & \sum_{e\in \{0,1\}^n} (-1)^{o(e)} A^{z(e) -o(e)} \left[ AD^{k_0 - n-1}_{a+o(e), (\dots, \emptyset,0, e, \emptyset, \dots)} - A^{-1} D^{k_0 - n-1}_{a+o(e) +1, (\dots, \emptyset,1, e, \emptyset, \dots)} \right] \\ 
 = & \sum_{e\in \{0,1\}^n} (-1)^{o(e)} A^{1+z(e) - o(e)}D^{k_0 - n-1}_{a+o(e), (\dots, \emptyset,0, e, \emptyset, \dots)} \\ & \quad \quad \quad \quad \quad \quad \quad \quad \quad \quad \quad  \quad +(-1)^{o(e)+1} A^{z(e) -o(e)-1} D^{k_0 - n-1}_{a+o(e)+1, (\dots, \emptyset,1, e, \emptyset, \dots)}  \\
 = & \sum_{e \in \{0,1\}^{n+1}} (-1)^{o(e)} A^{z(e) - o(e)} D^{k_0 - (n+1)}_{a+o(e), (\dots, \emptyset, e, \emptyset, \dots)}.
\end{align*}
\end{proof}

\begin{lem} \label{lem_3.14}
For any $a \in \Z$, we have $\Delta^{k_0}_{+} \left({}_l D^{k_0}_{a,\emptyset}\right), \Delta^{N-k_0}_{-} \left({}_r D^{k_0}_{a,\emptyset}\right)\in V^{\partial \Sigma'}$. Furthermore, $\Delta^{k_0}_{+} \left({}_l D^{k_0}_{a,\emptyset}\right)$ is a linear combination of elements of the form $D^{0}_{a',(v_1,\dots, v_{k_0},\emptyset,\dots, \emptyset)}$ and $\Delta^{N-k_0}_{-} \left({}_r D^{k_0}_{a,\emptyset}\right)$ is a sum of elements $D^{N}_{a',(\emptyset,\dots, \emptyset, v_{k_0+1}, \dots, v_N)}$.
\end{lem}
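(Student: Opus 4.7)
The plan is a direct specialization of the inductive identity established in Lemma \ref{equation_1_planar}. Concretely, one substitutes the extreme admissible value $n = k_0$ into Equation \eqref{eqn_1_planar} to obtain
\[
\Delta^{k_0}_{+}\!\left({}_l D^{k_0}_{a,\emptyset}\right) \;=\; \sum_{e\in\{0,1\}^{k_0}} (-1)^{o(e)} A^{z(e)-o(e)}\, D^{0}_{a+o(e),\,(e_1,\ldots,e_{k_0},\emptyset,\ldots,\emptyset)}.
\]
Every summand has upper index $0$ and coordinate vector supported in positions $1,\ldots,k_0$, which is exactly the structural claim of the second part of the lemma.

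For the first claim, I would argue that each $D^{0}_{a',v}$ in the sum lies in $V^{\partial \Sigma'}$. Since $l_0$ is the leftmost curve of the linear pants decomposition, it bounds a disk in $\Sigma'$ containing a single boundary component of $\Sigma'$ and is therefore boundary-parallel; the curves $c_i$ are boundary-parallel by definition. The only loops in $D^{0}_{a',v}$ that could a priori fail to be boundary-parallel or trivial are the $m$ unarrowed copies of $l_{k_0}$, and these are to be removed by an induction on $m$ using the merging and boundary-crossing relations of Section \ref{section_relations_pants} (notably Lemmas \ref{lem_merging_unknots} and \ref{lem_crossing_borders}), which let an unarrowed essential circle adjacent to boundary-parallel curves be absorbed into a boundary-parallel curve or dissolved into trivial loops, each step strictly decreasing $m$ modulo diagrams already in $V^{\partial \Sigma'}$.

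The statement for $\Delta^{N-k_0}_{-}\!\left({}_r D^{k_0}_{a,\emptyset}\right)$ follows by the symmetric specialization of Equation \eqref{eqn_2_planar} at $n = N - k_0$, with $l_N$ now playing the boundary-parallel role on the right end and the supported coordinates lying in positions $k_0+1,\ldots,N$. I expect the algebraic specialization into Lemma \ref{equation_1_planar} to be immediate; the main obstacle, and the step that really uses the geometry of the pants decomposition, is the reduction of the $m$ unarrowed $l_{k_0}$ copies, where one must check carefully that the applied skein relations never re-introduce essential non-boundary-parallel curves.
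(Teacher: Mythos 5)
Your first step --- specializing Lemma \ref{equation_1_planar} at $n=k_0$ and, symmetrically, Equation \eqref{eqn_2_planar} at $n=N-k_0$ --- is exactly the paper's proof, and it already gives the ``furthermore'' clause: every summand is a $D^{0}_{a',(v_1,\dots,v_{k_0},\emptyset,\dots,\emptyset)}$ (resp.\ $D^{N}_{a',(\emptyset,\dots,\emptyset,v_{k_0+1},\dots,v_N)}$), i.e.\ the arrowed copy of $l_{k_0}$ has been pushed onto the boundary-parallel curve $l_0$ (resp.\ $l_N$), with the $c_i$'s boundary-parallel by construction.

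The problem is your second step. You are right that each $D^{0}_{a',v}$ still carries the $m$ unarrowed copies of $l_{k_0}$, which are essential and not boundary-parallel; but the fix you propose does not work. Lemmas \ref{lem_merging_unknots} and \ref{lem_crossing_borders} only redistribute arrows among trivial circles and arcs near a fixed loop --- none of the local relations of Section \ref{section_relations_pants} can absorb or dissolve an essential separating curve in a planar surface with at least four boundary components. Eliminating those $m$ copies is precisely the content of Proposition \ref{prop_3.12} and Proposition \ref{thm_planar_case}, both of which take Lemma \ref{lem_3.14} as input, so completing your argument along those lines would be circular. The intended reading is that the $m$ background copies of $l_{k_0}$ are carried along unchanged and are disposed of by the outer induction on the number of non-trivial loops: the lemma is used modulo diagrams with strictly fewer non-trivial loops (compare the repeated ``up to sums of curves with fewer non-trivial loops'' in the proof of Proposition \ref{prop_3.12}, and the explicit spaces $V^{\partial\Sigma}_m$ in the proof of Proposition \ref{prop_genus_case}). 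You should therefore drop the attempted reduction of $m$ and record only the structural conclusion that the specialization of Lemma \ref{equation_1_planar} delivers.
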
 

\begin{proof} 
Setting $n=k_0$ in Equation \eqref{eqn_1_planar} yields the condition $\Delta^{k_0}_{+} \left({ }_l D^{k_0}_{a,\emptyset}\right)\in V^{\partial \Sigma'}$ and the first half of the statement. 
The second conclusion $\Delta^{N-k_0}_{-} \left({ }_r D^{k_0}_{a,\emptyset}\right)\in V^{\partial \Sigma'}$ follows by setting $n=N-k_0$ in Equation \eqref{eqn_2_planar}.
\end{proof}

\begin{prop}\label{prop_3.12} 
${ }_l D^{k_0}_{a,v}$ and ${ }_r D^{k_0}_{a,v}$ lie in $V^{\partial \Sigma'}$ for any $a\in \Z$ and $v\in \{0,1,\emptyset\}^N$.
\end{prop}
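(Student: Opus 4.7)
The plan is to induct on the number of non-empty entries of $v$, and at each step to combine two polynomial relations in the arrow-shift operator $s\colon {}_lD^{k_0}_{a,v}\mapsto {}_lD^{k_0}_{a+2,v}$ that annihilate ${}_lD^{k_0}_{a,v}$ modulo $V^{\partial\Sigma'}$ and diagrams of strictly smaller complexity. By symmetry it suffices to treat ${}_lD^{k_0}_{a,v}$; the base case $v=(\emptyset,\dots,\emptyset)$ is essentially Lemma \ref{lem_3.14}.

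The first relation is obtained by iterating Lemma \ref{lem_3.13} leftward, exactly as in the derivation of Lemma \ref{equation_1_planar}, yielding
\[ (As-A^{-1})^{k_0}\,{}_lD^{k_0}_{a,v}\in V^{\partial\Sigma'}+W,\]
where $W$ collects the diagrams that appear each time the leftward push-map crosses a position $i<k_0$ with $v_i\neq\emptyset$. Lemmas \ref{lem_merging_unknots}, \ref{lem_pushing_arrows}, and \ref{lem_crossing_borders} absorb every such interaction into diagrams with strictly fewer non-empty entries of $v$. The second relation comes from applying $\Delta_-^{N-k_0}$ to ${}_rD^{k_0}_{a,v}$ (giving $(A-A^{-1}s)^{N-k_0}\,{}_rD^{k_0}_{a,v}\in V^{\partial\Sigma'}+W'$) and translating back to the left side via Lemma \ref{lem_3.11}: since ${}_lD^{k_0}_{a,v}\equiv A^{2ma}\,{}_rD^{k_0}_{a,v}$ modulo simpler diagrams, the shift on ${}_rD$ corresponds to the shift $A^{-4m}s$ on ${}_lD$, producing
\[ (A-A^{-4m-1}s)^{N-k_0}\,{}_lD^{k_0}_{a,v}\in V^{\partial\Sigma'}+W''.\]

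The two annihilating polynomials $(As-A^{-1})^{k_0}$ and $(A-A^{-4m-1}s)^{N-k_0}$ have roots $s=A^{-2}$ and $s=A^{4m+2}$ respectively; since $A$ is transcendental, these are distinct for every $m\geq 0$, so the polynomials are coprime in $\Q(A)[s]$. A Bezout identity $U(s)(As-A^{-1})^{k_0}+V(s)(A-A^{-4m-1}s)^{N-k_0}=1$ yields
\[ {}_lD^{k_0}_{a,v}\in V^{\partial\Sigma'}+U(s)W+V(s)W'',\]
and the inductive hypothesis handles every term of $U(s)W+V(s)W''$, since applying an $s$-power preserves the count of non-empty entries of $v$ and hence keeps every term strictly simpler.

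The main obstacle will be the bookkeeping of the error terms $W$, $W'$, $W''$: one must verify that every such term arises from a diagram whose vector $v$ has strictly fewer non-empty entries (or is otherwise strictly smaller in a well-founded order), and that the local reductions of Section \ref{section_relations_pants} suffice whenever a loop being pushed collides with an existing $c_i$. A secondary concern is the careful propagation of the factor $A^{2ma}$ from Lemma \ref{lem_3.11} through the powers of $s$ in order to obtain the claimed polynomial $(A-A^{-4m-1}s)^{N-k_0}$ on the left-hand side.
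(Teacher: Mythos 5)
Your proposal is correct and its core mechanism is exactly the paper's: derive two operator-polynomial relations in the shift $s$ (one from pushing left via Lemma \ref{lem_3.13}/Lemma \ref{equation_1_planar}, one from pushing right and translating through Lemma \ref{lem_3.11}, which converts $\Delta_-$ into your $A - A^{-4m-1}s$, equivalently the paper's $\Delta_{+,m}$ conversion), then combine them by coprimality of the two polynomials in $\Q(A)[s]$ --- the paper uses the explicit identity $A^{-1}\Delta_{+,m} + A^{4m+1}\Delta_- = (A^{4m+2}-A^{-2})\,Id_V$ raised to the $N$th power in place of a general Bezout pair, but that is the same idea. The one place you overcomplicate things is the treatment of nonempty $v$: the paper simply isotopes the existing $c_i$ components (which are boundary parallel in $\Sigma'$) out of the way and reduces immediately to $v=\emptyset$, so the loops being pushed never ``collide'' with an existing $c_i$ and the induction on non-empty entries, together with the error terms $W$, $W'$, $W''$ you flag as the main obstacle, is unnecessary. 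The only residual bookkeeping genuinely needed is the induction on the number of unadorned copies of $l_{k_0}$ coming from the ``fewer non-trivial loops'' debris of Lemma \ref{lem_3.11}, which your well-founded-order remark already covers.
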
 

\begin{proof}
By pushing the boundary parallel curves `outside' $\Sigma'$, it is enough to show the proposition for $v= \emptyset$. 
Using Lemma \ref{lem_3.11}, modulo arrowed multicurves with fewer non-trivial loops, we get that 
$ s\left( { }_l D^{k_0}_{a,\emptyset}\right) = { }_l D^{k_0}_{a+2,\emptyset} \cong A^{2m(a+2)} { }_r D^{k_0}_{a+2,\emptyset}$. 
Thus, 
\begin{align*}
\Delta_{+} \left( { }_l D^{k_0}_{a,\emptyset}\right) = & As\left( { }_l D^{k_0}_{a,\emptyset} \right) - A^{-1} { }_l D^{k_0}_{a,\emptyset} \\
\cong & A^{4m+1} A^{2ma} { }_r D^{k_0}_{a+2,\emptyset} - A^{-1} A^{2ma} { }_r D^{k_0}_{a,\emptyset}\\
= & A^{2ma} \left[ A^{4m+1}s - A^{-1}\right] \left( { }_r D^{k_0}_{a,\emptyset}\right) \\
= & A^{2ma} \Delta_{+,m} \left( { }_r D^{k_0}_{a,\emptyset}\right).
\end{align*} 
Hence, up to sums of curves with less non-trivial loops in $\Sigma'$, Lemma \ref{lem_3.14} implies \[\Delta^{k_0}_{+,m} \left( { }_r D^{k_0}_{a,\emptyset}\right),  \Delta^{N-k_0}_{-} \left( { }_r D^{k_0}_{a,\emptyset}\right) \in V^{\partial\Sigma'}.\]
Finally, observe that 
$ A^{-1} \Delta_{+,m} + A^{4m+1} \Delta_- = \left( A^{4m+2} - A^{-2} \right) Id_V$. 
This yields 
\[{ }_r D^{k_0}_{a,\emptyset} = Id_{V}^{N} \left( { }_r D^{k_0}_{a,\emptyset}\right) = \frac{1}{\left( A^{4m+2} - A^{-2} \right)^N }  \left(  A^{-1} \Delta_{+,m} + A^{4m+1} \Delta_- \right)^N \left( { }_r D^{k_0}_{a,\emptyset}\right).\]
The result follows since $\Delta^{k_0}_{+,m} \left( { }_r D^{k_0}_{a,\emptyset}\right)$ and $\Delta^{N-k_0}_{-} \left( { }_r D^{k_0}_{a,\emptyset}\right)$ are both elements of $V^{\partial \Sigma'}$. 
\end{proof}

We are ready to describe an explicit generating set for $\S(\Sigma\times S^1)$ for any planar surface $\Sigma$. 
\begin{prop} \label{thm_planar_case}
Let $\Sigma$ be a $N$-holed sphere with $N\geq 1$. Then $\mathcal{S}(\Sigma \times S^1)$ is generated by arrowed unknots and $\partial$-parallel arrowed multicurves. 
\end{prop}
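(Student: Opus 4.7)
Plan. The proof combines Proposition \ref{prop_2.3} with Proposition \ref{prop_3.12} via induction, after first disposing of the small cases. If $N \leq 3$, every simple closed curve in an $N$-holed sphere is trivial or $\partial$-parallel, so every arrowed multicurve with no trivial component is automatically $\partial$-parallel; together with Proposition \ref{prop_2.3} this settles $N \in \{1,2,3\}$. Suppose $N \geq 4$, and identify $\Sigma$ with the surface $\Sigma'$ fixed at the start of Section \ref{section_planar_case}. By Proposition \ref{prop_2.3} it suffices to show that every arrowed multicurve $\gamma \subset \Sigma$ with no trivial component lies in $V^{\partial \Sigma}$. I would induct on a complexity $c(\gamma)$ that lexicographically records (number of essential non-$\partial$-parallel components of $\gamma$, total number of non-trivial components of $\gamma$), both counted with multiplicity; the base case, in which the first coordinate vanishes, is immediate.

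For the inductive step, choose an \emph{outermost} isotopy class $[\gamma_0]$ of essential non-$\partial$-parallel components of $\gamma$ --- i.e., one of the two components $\Sigma_0$ of $\Sigma \setminus \gamma_0$ contains no other essential non-$\partial$-parallel components of $\gamma$ --- and let $m + 1 \geq 1$ be its multiplicity. Since $\gamma_0$ splits $\Sigma$ into two planar subsurfaces, each admitting a linear pants decomposition, gluing along $\gamma_0$ yields a linear pants decomposition of $\Sigma$ in which $\gamma_0 = l_{k_0}$ for some $k_0 \in \{1,\dots, N-1\}$, matching the setup of Definition \ref{defn_linear_pants}. Using Lemmas \ref{lem_popping_arrows}, \ref{lem_pushing_arrows} and Equation \eqref{eqn_23} to transfer arrows among the $m + 1$ copies of $\gamma_0$, I would arrange --- modulo diagrams of strictly smaller $c$ --- that all such arrows are concentrated on a single designated copy lying on the clean side $\Sigma_0$. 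The relevant portion of $\gamma$ then matches the form ${}_l D^{k_0}_{a, v}$, with $v$ encoding the adjacent $\partial$-parallel components. Proposition \ref{prop_3.12} rewrites ${}_l D^{k_0}_{a, v}$ as a $\Q(A)$-linear combination of $\partial$-parallel arrowed multicurves plus diagrams with strictly fewer non-trivial loops, hence strictly smaller $c$. The inductive hypothesis then concludes.

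The main obstacle is justifying the application of Proposition \ref{prop_3.12} to the ${}_l D^{k_0}_{a, v}$-shaped portion of $\gamma$ in the presence of other essential non-$\partial$-parallel components of $\gamma$ lying on the non-clean side of $\gamma_0$: the proof of Proposition \ref{prop_3.12} uses an algebraic identity combining both $\Delta_+^{k_0}$-pushes to the left and $\Delta_-^{N-k_0}$-pushes to the right, whereas we want to push arrows only through the clean side $\Sigma_0$ so as not to disturb those spectator components. One must either verify that the relations used in Proposition \ref{prop_3.12} (namely Lemmas \ref{lem_3.11}, \ref{lem_3.13}, and the $\Delta$-map calculations) are supported in a collar of $\gamma_0$ on the clean side and therefore treat the spectator components as inert, or adapt the Bezout-style inversion so that only $\Delta_+$ (pushing into $\Sigma_0$) is required --- which should be feasible since the boundary components of $\Sigma_0$ other than $\gamma_0$ are $\partial$-parallel in $\Sigma$.
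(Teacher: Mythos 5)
Your reduction to Proposition \ref{prop_2.3}, the treatment of $N\leq 3$, and the overall inductive scheme are fine, but the inductive step has a genuine gap exactly where you flag it, and neither of your two proposed repairs works. Repair (a) fails because the relations behind Proposition \ref{prop_3.12} are not supported in a collar of $\gamma_0$ on one side: $\Delta_-^{N-k_0}$ is an iterate of Lemma \ref{lem_3.13}, each application of which carries a copy of $l_k$ across an entire pair of pants to $l_{k+1}$, so the pushes sweep all the way across the non-clean side, and the intermediate diagrams are no longer of the form $D^k_{a,v}$ once spectator essential curves sit there. Repair (b) fails for an algebraic reason: the identity $A^{-1}\Delta_{+,m}+A^{4m+1}\Delta_-=\left(A^{4m+2}-A^{-2}\right)\mathrm{Id}$ needs two coprime polynomials in the shift $s$, each of which lands in $V^{\partial\Sigma'}$ after enough applications. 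Pushing only into the clean side produces a single such relation --- $\Delta_+^{k_0}\left({}_l D^{k_0}_{a,\emptyset}\right)\in V^{\partial\Sigma'}$, which by Lemma \ref{lem_3.11} is the \emph{same} relation as $\Delta_{+,m}^{k_0}\left({}_r D^{k_0}_{a,\emptyset}\right)\in V^{\partial\Sigma'}$ --- and one polynomial relation of positive degree in $s$ cannot be inverted to recover ${}_r D^{k_0}_{a,\emptyset}$ itself.

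The paper's resolution is to localize rather than globalize: pick any edge $e=(v_1,v_2)$ of the dual graph $\Gamma(\gamma)$ and set $\Sigma'=\Sigma(v_1)\cup\Sigma(v_2)$, rather than taking a pants decomposition of all of $\Sigma$ adapted to an outermost curve. By construction $\gamma\cap\Sigma'$ has exactly one isotopy class of essential non-$\partial$-parallel curves in $\Sigma'$ (the class of $e$); every other component of $\gamma$ meeting $\Sigma'$ is trivial or $\partial$-parallel \emph{in $\Sigma'$}, and all remaining essential components of $\gamma$ lie entirely outside $\Sigma'$. Hence Proposition \ref{prop_3.12} applies verbatim inside $\mathcal{S}(\Sigma'\times S^1)$ --- both pushing directions are available because there are no spectators inside $\Sigma'$ --- and the relation pushes forward under $\mathcal{S}(\Sigma'\times S^1)\to\mathcal{S}(\Sigma\times S^1)$. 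The output is a combination of diagrams with dual graph $\Gamma(\gamma)/e$, and the induction runs on the number of vertices of the dual graph. Your argument becomes correct once you replace the global pants decomposition with this two-vertex subsurface; the ``outermost'' choice is then unnecessary.
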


\begin{proof} 
Proposition \ref{thm_planar_case} is equivalent to the statement that $\mathcal{S}(\Sigma \times S^1)$ is generated by arrowed multicurves with dual graphs isomorphic to a point. 
Let $\gamma$ be an arrowed multicurve in $\Sigma$ with $\Gamma(\gamma)\neq \{pt\}$. Let $e =(v_1,v_2)$ be a fixed edge of $\Gamma(\gamma)$, and let $\Sigma' \subset \Sigma$ be the subsurface $\Sigma(v_1) \cup \Sigma(v_2)$. 
By Lemma \ref{lem_3.11}, up to curves of smaller degree, we can arrange the arrows in the loops corresponding to $e$ so that only one loop (the closest to $\Sigma(v_2)$) may have arrows. 
By construction, $\gamma \cap \Sigma'$ has one isotopy class of separating non $\partial$-parallel curve in $\Sigma'$. Thus, there exists a linear pants decomposition for $\Sigma'$ and integers $a\in \Z$, $k_0 \in \{1, \dots, |\chi(\Sigma')|-1\}$ so that $\gamma \cong {}_r D^{k_0}_{a,\emptyset}$ (we focus on the non $\partial$-parallel components of $\gamma\cap \Sigma'$). Proposition \ref{prop_3.12} states that ${}_r D^{k_0}_{a,\emptyset}\in V^{\partial \Sigma'}$. 
Therefore, $\gamma$ is a $\Q(a)$-linear combination of arrowed multicurves with dual graphs isomorphic to $\Gamma(\gamma)/e$; with fewer vertices than $\Gamma(\gamma)$. 
\end{proof} 


\section{Non-planar case}\label{section_non_planar_case}

This section further exploits the proofs in \cite{KBSM_S1} to give a finiteness result for $\mathcal{S}(\Sigma \times S^1)$ for all orientable surfaces with boundary (Proposition \ref{thm_boundary_case}). Throughout this section, $\Sigma$ will be a compact orientable surface of genus $g>0$ with $N>0$ boundary components.

\subsection{Properties of stable multicurves}
\begin{defn}[Complexity]
Let $\gamma$ be an arrowed multicurve. Denote by $n$ the number of non-separating circles of $\gamma$, $m$ the number of non-trivial non $\partial$-parallel separating circles in $\gamma$, and $b$ the number of vertices in the dual graph of $\gamma$ intersecting $\partial\Sigma$. We define the complexity of a multicurve $\gamma$ as $\left( b, n+2m, n+m\right)$ and order them with the lexicographic order.  An arrowed multicurve is said to be \textbf{stable} if it is not a linear combination of diagrams with lower complexity.
\end{defn}

Proposition \ref{prop_3.7}, Proposition \ref{prop_3.8}, and Lemma \ref{lem_3.9} restate properties of stable curves from \cite{KBSM_S1}. Fix a stable arrowed multicurve $\gamma$ in $\Sigma$. 

\begin{prop}[Proposition 3.7 of \cite{KBSM_S1}] \label{prop_3.7}
Let $\Sigma' = \Sigma(v)$ be a vertex of $\Gamma$ with $|\partial \Sigma'|\geq 1$ and $g(\Sigma')\geq 1$. Then $\gamma \cap \Sigma'$ contains at most one non-separating curve. 
\end{prop}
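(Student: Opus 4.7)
The plan is to argue by contradiction. Suppose $\gamma$ is stable and $\gamma \cap \Sigma'$ contains two distinct non-separating circles $\alpha_1, \alpha_2$. I will rewrite $\gamma$ as a $\Q(A)$-linear combination of arrowed multicurves of strictly smaller complexity, contradicting stability.

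I would split into two cases according to whether $\alpha_1$ and $\alpha_2$ are isotopic in $\Sigma'$. In the first case, $\alpha_1$ and $\alpha_2$ cobound an embedded annulus $A \subset \Sigma'$. Using the parallel-curve relation from Lemma \ref{lem_3.11} inside a neighborhood of $A$, I would redistribute all arrows onto a single copy, and then merge the resulting parallel loops via Lemma \ref{lem_merging_unknots} applied in the annular neighborhood. This expresses $\gamma$ as a combination of diagrams in which $\alpha_1 \cup \alpha_2$ has been replaced by one of the boundary circles of $A$ (a separating curve in $\Sigma'$, possibly boundary-parallel) together with lower arrow-count correction terms. The new diagrams have the non-separating count $n$ drop by at least $2$ while the separating count $m$ grows by at most $1$, so $n + m$ (and $n + 2m$) strictly decreases in the lexicographic order on complexity.

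In the second case, $\alpha_1$ and $\alpha_2$ are non-isotopic non-separating curves. Since $g(\Sigma') \geq 1$ and $\partial \Sigma' \neq \emptyset$, I can choose a properly embedded arc $\eta \subset \Sigma'$ based at a boundary component of $\Sigma'$ that crosses $\alpha_1$ transversely in a single point and is disjoint from $\alpha_2$ and from every other component of $\gamma \cap \Sigma'$. Performing an ambient isotopy that drags a small arc of $\alpha_2$ across $\eta$ produces exactly two crossings with $\alpha_1$. Resolving these two crossings with the Kauffman relation (K1) yields four diagrams: one band-sums $\alpha_1$ and $\alpha_2$ into a single curve (reducing $n$ by $1$), and the others either merge the pair into a curve parallel to $\alpha_1$ (sending us back into the first case, which is handled) or reconnect them into a curve bounding the handle traversed by $\eta$ (a separating curve). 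In every outcome the triple $(b, n + 2m, n + m)$ strictly drops.

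The main obstacle is the arrow bookkeeping in the second case, since each K1 resolution and each R4 move relocates arrows between arcs, and the resulting diagrams must genuinely have smaller complexity after arrows are counted. I would standardize arrow positions on $\alpha_1$ and $\alpha_2$ using Lemmas \ref{lem_popping_arrows} and \ref{lem_pushing_arrows} before performing the resolution, so that the arrows introduced at the resolution sites can be absorbed via Equation \eqref{eqn_23+} or pushed to the boundary without creating new non-separating components. This is the strategy taken in Proposition 3.7 of \cite{KBSM_S1}.
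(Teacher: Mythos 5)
The paper offers no proof of this statement to compare against: it is quoted directly from Proposition 3.7 of \cite{KBSM_S1} (see the sentence preceding it, ``Proposition \ref{prop_3.7}, Proposition \ref{prop_3.8}, and Lemma \ref{lem_3.9} restate properties of stable curves from \cite{KBSM_S1}''). Judged on its own, your proposal has a concrete gap in each case. In Case 1 you propose to replace two parallel copies of the non-separating curve $\alpha_1$ by ``one of the boundary circles of $A$,'' i.e.\ by a single copy of $\alpha_1$. This is impossible: the skein module is graded by $H_1(\Sigma\times S^1;\Z/2\Z)$, and two parallel copies of $\alpha_1$ represent a class whose $\Sigma$-component is $2[\alpha_1]=0$, whereas a single copy represents $[\alpha_1]\neq 0$; no sequence of (K1), (K2) and Reidemeister moves relates them. (Relatedly, Lemma \ref{lem_merging_unknots} merges an arrowed \emph{unknot} with an adjacent loop, not two parallel essential curves, and your bookkeeping --- $n$ dropping by $2$ while a separating, possibly boundary-parallel, curve appears --- does not describe any available move.) What is true is that two parallel copies of $\alpha_1$ can be traded for class-zero diagrams (unknots, boundary-parallel and separating curves), but that reduction is global: it pushes the curves across the handle of $\Sigma'$ using the difference operators $\Delta_{\pm}$ and an identity of the form $c\,\mathrm{Id}=A^{-1}\Delta_{+,m}+A^{4m+1}\Delta_{-}$, as in Propositions \ref{prop_3.12} and \ref{prop_genus_case}. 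This is precisely where the hypothesis $g(\Sigma')\geq 1$ enters, and your argument never uses it.

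Case 2 fails for a different reason. The move you describe --- push a finger of $\alpha_2$ along $\eta$ through $\alpha_1$, creating two crossings, then resolve both with (K1) --- is exactly the computation that proves invariance of the Kauffman bracket under the second Reidemeister move. Of the four states, one is the original diagram with coefficient $1$; the other three all reduce to the diagram reconnecting $\alpha_1$ and $\alpha_2$, with total coefficient $A^{2}+A^{-2}+(-A^{2}-A^{-2})=0$ (the middle term coming from the state that carries an extra trivial circle). You therefore recover only the tautology $\gamma=\gamma$, and no diagram of smaller complexity is produced. Every nontrivial relation in this paper and in \cite{KBSM_S1} is obtained by winding a strand around the $S^1$ fiber (the move R5 underlying Equation \eqref{eqn_23}) or around a handle of the surface; your finger move stays in $\Sigma\times\{pt\}$, so it cannot yield the reduction you need.
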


\begin{prop}[From proof of Proposition 3.8 of \cite{KBSM_S1}] \label{prop_3.8}
If $e=(v,v')$ is an edge of $\Gamma$ with $g(v') \geq 1$, then the valence of $v$ is at most two. 
\end{prop}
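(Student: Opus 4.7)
The plan is to argue by contradiction. Assume $\gamma$ is stable and its dual graph contains an edge $e=(v,v')$ with $g(\Sigma(v'))\geq 1$ and valence of $v$ at least three, and exhibit $\gamma$ as a $\Q(A)$-linear combination of arrowed multicurves of strictly smaller complexity $(b, n+2m, n+m)$, contradicting stability.

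First, I would fix the relevant local data. Let $c$ be the separating curve of $\gamma$ realising the edge $e$ (so $c \subset \partial \Sigma(v')\cap \partial \Sigma(v)$), and let $c_1, c_2 \subset \partial \Sigma(v)$ be separating curves realising two further edges at $v$. By Proposition \ref{prop_3.7}, $\gamma \cap \Sigma(v')$ consists of $\partial$-parallel curves together with at most one non-separating loop $\alpha$. Since $g(\Sigma(v'))\geq 1$, I can choose a properly embedded arc $\beta \subset \Sigma(v')$ with both endpoints on $c$ that winds once around a handle of $\Sigma(v')$ and is disjoint from $\gamma \cap \Sigma(v')$; positive genus ensures such a handle exists even after cutting along $\alpha$.

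Next, I would apply Lemma \ref{lem_popping_arrows} to reduce to the case in which $c$, $c_1$, $c_2$ carry no arrows, modulo diagrams of lower complexity. I would then perform a handle slide on a parallel push-off of $c_1$: push the push-off through $c$ into $\Sigma(v')$, drag it along $\beta$ around the handle, and push it back through $c$. The two transverse intersections with $c$ are resolved via Lemma \ref{lem_crossing_borders}, producing an identity expressing $\gamma$ as a slid diagram $\gamma'$ plus correction terms. In each correction term, either a copy of $c_1$ is merged with a copy of $c$ (strictly reducing $m$) or a trivial component appears and is eliminated via Proposition \ref{prop_2.3} (again reducing $n+2m$). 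In the slid diagram $\gamma'$, the image of $c_1$ is no longer parallel to $c_1$ in $\Sigma(v)\cup \Sigma(v')$, so the dual graph of $\gamma'$ has one fewer edge at $v$, strictly dropping $(b, n+2m, n+m)$ lexicographically.

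The main obstacle is the bookkeeping required to check that every term on the right-hand side of the resulting relation truly has smaller complexity than $\gamma$. The cases to handle are: (i) multiple parallel copies of $c$ in $\gamma$, addressed by performing the slide on the outermost copy of $c_1$ and invoking Lemma \ref{lem_pushing_arrows} to commute with the remaining parallel copies; (ii) the presence of the non-separating loop $\alpha$ in $\Sigma(v')$, addressed by selecting $\beta$ in a subsurface of $\Sigma(v')$ disjoint from $\alpha$, which exists precisely because $g(\Sigma(v'))\geq 1$; and (iii) arrows on $c$, $c_1$, $c_2$, which are removed modulo lower complexity at the outset via Lemma \ref{lem_popping_arrows}. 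Once these are handled, the resulting contradiction with stability forces the valence of $v$ to be at most two.
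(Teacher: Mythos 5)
This statement is imported from \cite{KBSM_S1} (the paper gives no proof of its own, only the citation ``from proof of Proposition 3.8 of \cite{KBSM_S1}''), so the comparison is against the argument in that reference, whose template the present paper does reproduce elsewhere (Proposition \ref{prop_3.12}, Proposition \ref{prop_genus_case}). Your proposal has a genuine gap at its central step: the ``handle slide'' of a push-off of $c_1$ around a handle of $\Sigma(v')$ is not an operation available in the skein module. An isotopy of the link in $\Sigma\times S^1$ does not change its skein class, so dragging the push-off along $\beta$ and resolving whatever crossings appear can only return the original diagram (plus nothing); while replacing $c_1$ by a genuinely non-isotopic ``slid'' curve is a different element of $\mathcal{S}(\Sigma\times S^1)$, and you have produced no skein relation connecting the two. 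Lemma \ref{lem_crossing_borders} cannot supply one: it is a rewriting of Equation \eqref{eqn_23} for a strand passing an arrowed unknot, not a device for resolving intersections between two components of a multicurve. Moreover, even if the slide were granted, the slid curve would still be an essential separating simple closed curve, so none of $b$, $n$, $m$ changes and the complexity $(b,\,n+2m,\,n+m)$ need not drop; your claim that the correction terms ``strictly reduce $m$'' is asserted, not derived.

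The idea you are missing is the one that actually exploits $g(v')\geq 1$: a separating curve bounding a positive-genus subsurface satisfies a nontrivial relation with configurations of non-separating curves inside that subsurface (Proposition 3.5 of \cite{KBSM_S1}, obtained by resolving the crossing of two non-separating curves whose regular neighborhood has that boundary). Packaged into the $\Delta$-map formalism, this shows that suitable powers $\Delta_-^{k}$ applied to the diagram land in a subspace of lower complexity (using the genus of $\Sigma(v')$ on one side), while the hypothesis that $v$ has valence at least three provides a second, independent containment for $\Delta_{+}^{k'}$ on the other side; the identity $A^{-1}\Delta_{+}+A\Delta_{-}=(A^{2}-A^{-2})\,\mathrm{Id}$ raised to a sufficiently high power then expresses the diagram itself as a combination of lower-complexity terms, contradicting stability. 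Your proposal contains neither the genus relation nor the interpolation identity, and the complexity bookkeeping that you flag as ``the main obstacle'' cannot be completed along the route you describe.
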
 

\begin{lem} [Lemma 3.9 of \cite{KBSM_S1}] \label{lem_3.9}
For a vertex $v$ with $g(v)\geq 1$ and valence two, $\gamma\cap \Sigma(v)$ contains no non-separating curves.
\end{lem}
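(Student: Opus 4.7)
The approach is proof by contradiction, parallel to the arguments for Propositions \ref{prop_3.7} and \ref{prop_3.8}. Suppose $\gamma$ is a stable arrowed multicurve and $\alpha\subset\gamma\cap\Sigma(v)$ is non-separating; by Proposition \ref{prop_3.7}, $\alpha$ is the unique non-separating component of $\gamma\cap\Sigma(v)$. Having valence two in $\Gamma$, the subsurface $\Sigma(v)$ has exactly two boundary components $s_1,s_2$ coming from edges of $\Gamma$ (plus possibly some boundary-parallel circles of $\gamma$ when $\Sigma(v)\cap\partial\Sigma\neq\emptyset$).

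Using $g(v)\ge 1$, I would pick a simple closed curve $\beta\subset\Sigma(v)$ that intersects $\alpha$ transversely in exactly one point and is disjoint from the rest of $\gamma\cap\Sigma(v)$. From $\beta$ I build a small embedded arc $\delta\subset\Sigma(v)$ whose endpoints lie on $s_1$ and whose interior runs once around the handle determined by $\alpha\cup\beta$; by construction $\delta$ meets $\alpha$ exactly twice and is disjoint from the remaining components of $\gamma$.

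The main step is to express $\gamma$ as a linear combination of lower-complexity diagrams by ``pulling $s_1$ through the handle along $\delta$'' and then resolving. Concretely, take a parallel copy $s_1'$ of $s_1$ inside $\Sigma(v)$ near $\delta$. Using Lemma \ref{lem_pushing_arrows} together with arrow-reshuffling by (R4), one rewrites the pair $s_1\cup s_1'$ in a neighborhood of $\delta$ as a combination involving a single curve $\tilde s_1$ obtained by band-summing $s_1$ and $s_1'$ along $\delta$; this new curve crosses $\alpha$ transversely in two points. Apply the Kauffman relation (K1) at those two crossings. Each of the four resulting smoothings either band-sums $\alpha$ into a copy of $s_1$, producing a curve that is separating or $\partial$-parallel in $\Sigma$ (so the count $n$ drops by one), or splits a trivial loop off $\alpha$ that can be merged via Lemmas \ref{lem_flip_unknot} and \ref{lem_merging_unknots}. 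Because $\beta$ is dual to $\alpha$, a standard intersection-number argument shows that none of the resulting curves is non-separating in $\Sigma(v)$, so $n$ strictly decreases while $m$ and $b$ do not increase. Hence every term has strictly smaller complexity than $\gamma$, contradicting stability.

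The principal obstacle will be making the handle-move rigorous inside the arrowed-diagram calculus and tracking arrow data through the smoothings: one needs to arrange, via Lemmas \ref{lem_popping_arrows}--\ref{lem_pushing_arrows}, that the arrows ending up on the new separating and $\partial$-parallel components do not inflate the complexity by creating new essential separating curves that would raise $m$, nor force a previously ``interior'' vertex to meet $\partial\Sigma$ and raise $b$. This arrow bookkeeping is the technical core of the argument, but the framework developed in Section \ref{section_relations_pants}, together with the templates of Propositions \ref{prop_3.7} and \ref{prop_3.8}, is designed precisely to handle it.
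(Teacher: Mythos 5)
First, a point of comparison: the paper does not prove this statement at all; it is quoted as Lemma 3.9 of \cite{KBSM_S1}, so there is no internal proof to measure your argument against. Judged on its own terms, your argument has a fatal gap at its central step. You claim that each smoothing that band-sums $\alpha$ into a copy of $s_1$ produces a curve that is separating or $\partial$-parallel, so that $n$ drops. This is false: $s_1$ is one of the curves of $c$, hence separating in $\Sigma$, so $[s_1]=0$ in $H_1(\Sigma;\Z/2\Z)$, and any band-sum of $\alpha$ with a parallel copy of $s_1$ represents $[\alpha]+[s_1]=[\alpha]\neq 0$ and is therefore still non-separating. Your own dual curve $\beta$ detects this: the band-summed curve meets $\beta$ in $1+0\equiv 1\pmod 2$ points, so the ``standard intersection-number argument'' you invoke yields the opposite of what you assert. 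More generally, since mod-$2$ homology classes are preserved under smoothing, every state obtained from resolving the two crossings of $\tilde s_1$ with $\alpha$ still contains a circle with nonzero class in $H_1(\Sigma;\Z/2\Z)$, i.e.\ a non-separating circle, so $n$ need not decrease and no contradiction with stability is obtained. A secondary issue: replacing the disjoint pair $s_1\cup s_1'$ by a single band-summed curve $\tilde s_1$ is not an operation supplied by Lemma \ref{lem_pushing_arrows} or (R4); it requires introducing and resolving genuine crossings, and one of the resulting states returns the original configuration, so at best you obtain a relation, not a rewriting.

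A useful sanity check that something must be wrong: your argument nowhere uses that $v$ has valence two --- the second boundary curve $s_2$ plays no role --- so, if correct, it would show that no positive-genus vertex of a stable multicurve carries a non-separating curve. That contradicts Proposition \ref{prop_stable_line} and Figure \ref{fig_two_types}, where stable multicurves of types 1 and 2 carry non-separating curves precisely in positive-genus vertices of valence one. Any correct proof must make essential use of both boundary curves $s_1$ and $s_2$ of $\Sigma(v)$; for the actual argument you should consult Lemma 3.9 of \cite{KBSM_S1}.
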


Now, Proposition \ref{prop_stable_line} shows that stable arrowed multicurves satisfy $b(\gamma)= 1$. 

\begin{prop}\label{prop_stable_line}
Stable arrowed multicurves have dual graphs isomorphic to lines. Moreover, they are $\Q(A)$-linear combinations of arrowed unknots and the two types of arrowed multicurves depicted in Figure \ref{fig_two_types}.
\end{prop}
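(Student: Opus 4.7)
The plan is to first establish that the dual graph $\Gamma(\gamma)$ of a stable multicurve $\gamma$ is a line---equivalently, that every vertex has valence at most two---and then to pin down the precise local structure that each vertex must have. Since $\Gamma(\gamma)$ is defined by cutting $\Sigma$ along a collection of essential \emph{separating} curves, it is automatically a tree: any cycle in $\Gamma(\gamma)$ would require a non-separating circle in $c$, contradicting how $c$ is built. Thus the ``line'' condition reduces to bounding each valence by two.

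Suppose for contradiction that some vertex $v$ of a stable $\gamma$ has valence $k\geq 3$. I split into cases on the genus of $\Sigma(v)$. If $\Sigma(v)$ is planar, then it has at least $k\geq 3$ boundary components coming from curves of $c$ (plus possibly components of $\partial \Sigma$), so Proposition \ref{thm_planar_case} applies inside $\Sigma(v)\times S^1$. Using that proposition, $\gamma\cap\Sigma(v)$ becomes a $\Q(A)$-linear combination of arrowed unknots together with curves parallel to either a separating curve of $c$ or to a component of $\partial \Sigma$, modulo diagrams in $\Sigma(v)$ with strictly fewer non-$\partial$-parallel loops. Propagating this to the global diagram strictly decreases $n+m$ (and hence $n+2m$), contradicting the stability of $\gamma$. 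If instead $g(\Sigma(v))\geq 1$, then Proposition \ref{prop_3.8} forces every neighbor of $v$ to have genus zero, and Proposition \ref{prop_3.7} says that $\gamma\cap\Sigma(v)$ contains at most one non-separating curve. A surgery removing this (possible) non-separating curve yields a planar subsurface of $\Sigma(v)$ with $\geq 3$ boundary components, on which the planar reduction above applies and again strictly decreases complexity. I expect this last step---carefully tracking the single non-separating curve and the resulting change in the boundary structure of $\Sigma(v)$ during the surgery---to be the main technical obstacle of the proof.

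Once the dual graph is forced to be a line $v_1 - v_2 - \cdots - v_r$, the internal (valence-two) vertices are highly constrained: Lemma \ref{lem_3.9} forbids non-separating curves inside any positive-genus internal vertex, while a valence-two planar internal vertex is a pair of pants in which the planar reduction of Proposition \ref{thm_planar_case} leaves only $\partial$-parallel curves. The endpoints $v_1$ and $v_r$ each carry at most one non-separating curve by Proposition \ref{prop_3.7}, together with possibly $\partial$-parallel curves parallel to the adjacent separating curve or to components of $\partial \Sigma$. The two canonical pictures in Figure \ref{fig_two_types} then correspond precisely to whether each endpoint meets $\partial \Sigma$ (carrying a boundary-parallel pattern) or is a one-holed positive-genus piece (carrying a single non-separating curve). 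Any remaining trivial loops are absorbed into the arrowed-unknot term appearing in the statement.
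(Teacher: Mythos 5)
There is a genuine gap at the central step. Your plan for bounding the valence is to apply Proposition \ref{thm_planar_case} \emph{inside a single vertex} $\Sigma(v)$ and conclude that $n+2m$ strictly drops. But this cannot produce any reduction: by construction of the dual graph, every essential separating component of $\gamma$ is isotopic to a component of $c\subset\partial\Sigma(v)$, and every simple closed curve in a planar component of $\Sigma-c$ is separating in $\Sigma$ (each curve of $c$ separates $\Sigma$, so the complementary pieces attach to only one side). Hence for a planar vertex, $\gamma\cap\Sigma(v)$ \emph{already} consists only of trivial curves and curves $\partial$-parallel in $\Sigma(v)$; Proposition \ref{thm_planar_case} returns the diagram essentially unchanged, and the components parallel to curves of $c$ still count toward $m$ because they remain essential and non-$\partial$-parallel in $\Sigma$. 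No contradiction with stability is obtained, so the valence bound is not established. Relatedly, your proposal never engages with the first (most significant) coordinate $b$ of the complexity, which is precisely what the paper's argument reduces first: it connects two boundary-meeting vertices by a path in $\Gamma$, assembles a \emph{connected planar subsurface $\Sigma'$ spanning several vertices} (using Proposition \ref{prop_3.7} to route around the at most one non-separating curve in each positive-genus piece), and applies Proposition \ref{thm_planar_case} to that $\Sigma'$. There the curves of $c$ lying in the interior of $\Sigma'$ are essential and non-$\partial$-parallel \emph{in $\Sigma'$}, so they genuinely get pushed to $\partial\Sigma'$ and $b$ decreases. That ``spanning subsurface'' idea is the missing ingredient; the line structure then follows not from a planar valence computation but from Proposition \ref{prop_3.8} together with the observation that, once $b=1$, every vertex other than the unique boundary vertex $x_0$ must have positive genus (a planar one would be a disk or an annulus, both excluded).

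The final step also needs more than you provide. Knowing that the two endpoints of the line each carry at most one non-separating curve does not yet yield Figure \ref{fig_two_types}: the line may have many internal positive-genus vertices, hence many isotopy classes of essential separating curves contributing to $m$. The paper eliminates these by repeatedly applying Lemma 3.12 of \cite{KBSM_S1} to the complement of the single non-separating curve in $\Sigma(e)$, pushing the separating loops to the boundary and contracting the edges of $\Gamma$ one at a time until each side of $x_0$ has at most one vertex. Your sketch stops short of this contraction argument.
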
 
\begin{figure}[h]
\centering
\includegraphics[scale=.55]{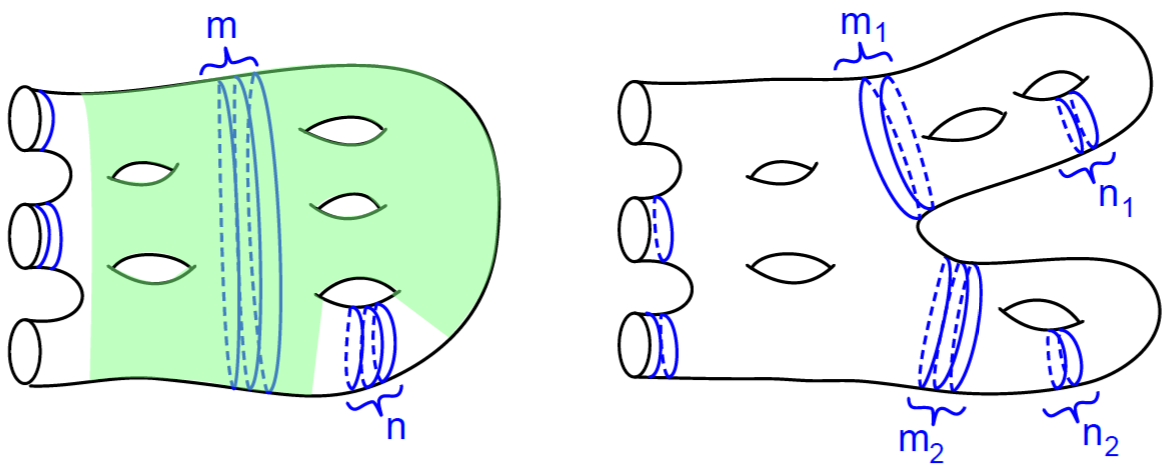}
\caption{Type 1 multicurves contain only one isotopy class of non-separating simple curve and type 2 at most two non-separating loops.}
\label{fig_two_types}
\end{figure}
\begin{proof} 
Suppose $b(\gamma)>1$; i.e., there exist two distinct vertices $v_1, v_2 \in \Gamma$ containing boundary components of $\Sigma$. We will show that $\gamma$ is not stable. 
There exists a path $P\subset \Sigma$ connecting $v_1$ and $v_2$. For each vertex $x \in P$, we define a subsurface $\Sigma'(x) \subset \Sigma(x)$ as follows: 
If $\Sigma(x)$ is planar, define $\Sigma'(x) := \Sigma(x)$. 
Suppose now that $g(x)\geq 1$ and $x\notin\{ v_1, v_2\}$. Proposition \ref{prop_3.7} states that $\gamma \cap \Sigma(x)$ contains at most one non-separating loop. Thus, we can find a planar surface $\Sigma'(x)\subset \Sigma(x)$ disjoint from the non-separating loop such that $\partial \Sigma'(x)$ contains the two boundaries of $\Sigma(x)$ participating in the path $P$ (see Figure \ref{fig_path}). 
Suppose now $g(x) \geq 1$ and $x=v_i$. Using Proposition \ref{prop_3.7} again, we can find a subsurface $\Sigma'(x)\subset \Sigma(x)$ with $\partial \Sigma'(x)$ containing the $\Sigma(x) \cap \partial \Sigma$ and the one loop of $\partial \Sigma(x)$ participating in the path $P$ (see Figure \ref{fig_path}). 
Define $\Sigma'\subset \Sigma$ to be the connected surface obtained by gluing the subsurfaces $\Sigma'(x)$ for all $x\in P$. Since $\Gamma$ is a tree, $\Sigma'$ must be planar. 

\begin{figure}[h]
\centering
\includegraphics[scale=.5]{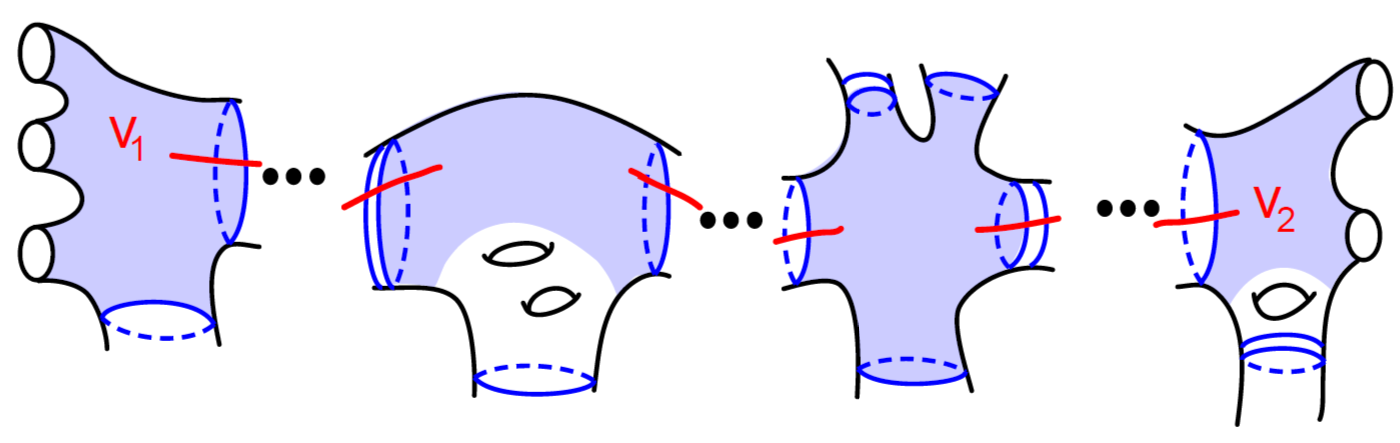}
\caption{Building the subsurface $\Sigma'$.}
\label{fig_path}
\end{figure}

By construction $\gamma \cap \Sigma'$ can be thought of as an element of $\mathcal{S}(\Sigma' \times S^1)$. Proposition \ref{thm_planar_case} states that $\gamma \cap \Sigma'$ can be written as $\Q(a)$-linear combination of arrowed diagrams with only trivial and $\partial$-parallel curves in $\Sigma'$. In particular, $\gamma$ can be written as a linear combination of arrowed diagrams $\gamma'$ in $\Sigma$ with $b(\gamma')<b(\gamma)$, and so $\gamma$ is not stable.

Let $\gamma$ be an stable arrowed multicurve. Since $b(\gamma)=1$, there is a unique vertex $x_0 \in \Gamma$ with $\partial \Sigma \subset \Sigma(x_0)$. Notice that any vertex $v\in \Gamma$ of valence two either has positive genus or is equal to $x_0$. This assertion, together with Proposition \ref{prop_3.8}, implies that $\Gamma$ is isomorphic to a line where every vertex different than $x_0$ has positive genus. 

The graph $\Gamma \setminus \{x_0\}$ is the disjoint union of at most two linear graphs $\Gamma_1$ and $\Gamma_2$; $\Gamma_i$ might be empty. For each $\Gamma_i\neq \emptyset$, the subsurface $\Sigma(\Gamma_i)$ is a surface of positive genus with one boundary component. 
If each $\Gamma_i$ has at most one vertex then $\gamma$ looks like curves in Figure \ref{fig_two_types} and the proposition follows. Suppose then that $\Gamma_i$ has two or more vertices and pick an edge $e$ of $\Gamma_i$. 
By Proposition \ref{prop_3.7} and Lemma \ref{lem_3.9}, $\gamma \cap \Sigma(e) $ contains at most one isotopy class of non-separating curves. Denote such curve by $\alpha$; observe that $\alpha$ is empty unless $e$ is has an endpoint on a leaf of $\Gamma$. Let $\Sigma''$ be the complement of an open neighborhood of $\alpha$ in $\Sigma(e)$. By construction, $\gamma \cap \Sigma''$ contains one isotopy class of non-trivial separating curves in $\Sigma''$. 
By Lemma 3.12 of \cite{KBSM_S1} we can `push' the separating arrowed loops in $\gamma \cap \Sigma''$ towards the boundary of $\Sigma''$. Thus, we can write $\gamma$ as a linear combination of diagrams with dual graph $\Gamma / e$. We can repeat this process until we obtain only summands with each $\Gamma_i$ having at most one vertex. 
\end{proof}

\begin{figure}[h]
\centering
\includegraphics[scale=.6]{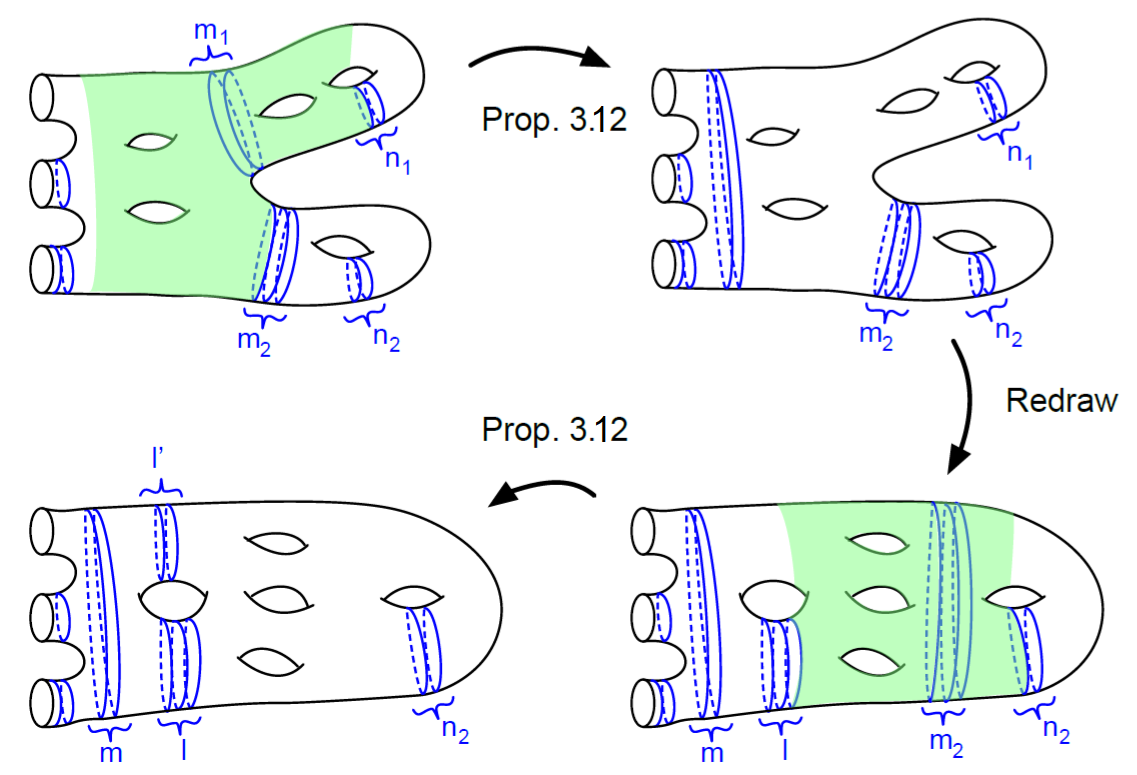}
\caption{One needs to apply Proposition 3.12 of \cite{KBSM_S1} twice for diagrams of type 2.}
\label{fig_last_step}
\end{figure}

\begin{prop} \label{prop_genus_case}
Let $\Sigma$ be an orientable surface of genus $g>0$ and $N>0$ boundary components. 
Then $\mathcal{S}(\Sigma\times S^1)$ is generated by arrowed unknots and arrowed multicurves with $\partial$-parallel components and at most one non-separating simple closed curve.
\end{prop}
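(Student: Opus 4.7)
By Proposition \ref{prop_2.3}, every element of $\mathcal{S}(\Sigma\times S^1)$ is spanned by arrowed multicurves, so it suffices to prove the statement for such a $\gamma$. Inducting on the complexity $(b,n+2m,n+m)$ defined at the start of Section \ref{section_non_planar_case}, we may assume $\gamma$ is stable. Proposition \ref{prop_stable_line} then presents $\gamma$ as a $\Q(A)$-linear combination of arrowed unknots and the type 1 and type 2 multicurves of Figure \ref{fig_two_types}. Arrowed unknots already satisfy the conclusion, so the remaining task is to control the non-separating components in type 1 and type 2 diagrams, which we carry out by a secondary induction on the number $n(\gamma)$ of non-separating loops.

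For type 1 diagrams with $k\geq 2$ parallel copies of the unique non-separating isotopy class $[\alpha]$: any two adjacent copies cobound an annular subsurface $A\subset \Sigma$, and Lemma \ref{lem_3.11} inside $A$ shifts arrows between them. Each application rewrites the pair as a single copy of $\alpha$ together with one separating curve parallel to $\alpha$, modulo strictly lower primary complexity. The new separating curve lies in the planar complement of $\alpha$ in $\Sigma$, so one invocation of Proposition \ref{thm_planar_case} there absorbs it into $\partial$-parallel components (plus further lower-complexity terms). Iterating drops $n(\gamma)$ to one, exactly as required.

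The main obstacle is type 2, where $\gamma$ contains non-separating curves $\alpha_1\subset \Sigma(v_1)$ and $\alpha_2\subset \Sigma(v_2)$ on opposite sides of the central vertex $x_0$. Because type 2 multicurves are stable, the primary-complexity argument cannot by itself reduce $n(\gamma)$ from two to one. My plan is a handle-style skein identity inside the subsurface $\Sigma(v_2)\cup N(c_2)$, where $c_2=\partial \Sigma(v_2)$, that rewrites the local configuration $\alpha_2\cup c_2$ as a combination of diagrams in which $\alpha_2$ has been replaced by loops parallel to $c_2$ (which are separating and then $\partial$-parallel or absorbable via Lemma \ref{lem_3.11} together with Proposition \ref{thm_planar_case}), modulo lower primary complexity in $\Sigma$. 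Establishing this identity is the crux: it requires combining the crossing-border relation (Lemma \ref{lem_crossing_borders}), the pushing relations (Lemma \ref{lem_pushing_arrows}), and the parallel-curve redistribution (Lemma \ref{lem_3.11}), while verifying that every resulting term is strictly simpler either in the primary complexity or in $n(\gamma)$, so that the joint induction terminates.
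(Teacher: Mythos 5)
Your setup (pass to stable multicurves via Proposition \ref{prop_stable_line} and then control the type 1 and type 2 diagrams) matches the paper's, but both of your reduction steps have genuine gaps. For type 1, the claim that Lemma \ref{lem_3.11} ``rewrites the pair as a single copy of $\alpha$ together with one separating curve parallel to $\alpha$'' is not a valid move: Lemma \ref{lem_3.11} only redistributes arrows among parallel copies (modulo diagrams with fewer non-trivial loops); it never decreases the number of parallel copies of a non-separating curve, and a curve isotopic to the non-separating $\alpha$ cannot itself be separating. The actual reduction of the number of non-separating loops to at most one is the content of Proposition 3.16 of \cite{KBSM_S1}, which the paper imports; it relies on the $\Delta$-map calculus across the genus handles (Lemmas 3.13 and 3.14 of \cite{KBSM_S1}), not on an annulus-local relation. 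For type 2 you explicitly defer the crux --- the ``handle-style skein identity'' --- so that case is not proved; this identity is exactly what the paper extracts from $\Delta^{2g-1}_-({}_rD_a)=\Delta^{2g-1}_+({}_lE_{a,0})$ together with operator identities such as $A^{4m+3}\Delta_{-,1}+A^{-1}\Delta_{+,m}=(A^{4m+4}-A^{-2})\,\mathrm{Id}$.

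There is a further omission: even after achieving $n_1+n_2\leq 1$, the generating diagrams still contain $m$ parallel copies of an essential separating curve $c$ (the boundary of the positive-genus part of $\Sigma$), and these are not $\partial$-parallel in $\Sigma$, so Proposition \ref{thm_planar_case} alone cannot absorb them --- the planar argument needs a planar subsurface containing the relevant boundary circles, which fails when one side of $c$ has genus. The paper spends the bulk of its proof (Cases 1 and 2) eliminating these copies of $c$, and the key difficulty is that pushing arrows past the $m$ arrowless copies of $c$ introduces factors $A^{2ma}$ (Lemma \ref{lem_3.11}), forcing the modified operator $\Delta_{+,m}=A^{4m+1}s-A^{-1}$ and the identity $A^{-1}\Delta_{+,m}+A^{4m+1}\Delta_-=(A^{4m+2}-A^{-2})\,\mathrm{Id}$ in order to recover the identity map from operators known to land in $V^{\partial\Sigma}_m$. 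None of this appears in your plan, so the proof as proposed does not go through.
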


\begin{proof} 
Using Proposition 3.12 of \cite{KBSM_S1} with $\Sigma'$ being the shaded surfaces in Figures \ref{fig_two_types} and \ref{fig_last_step}, we obtain that the $\mathcal{S}(\Sigma\times S^1)$ is generated by arrowed diagrams as in Figure \ref{fig_last_step} where $l+l'=n_1$ and $m,n_1, n_2 \geq 0$. Observe that, ignoring the $m$ curves, the $l$ and $l'$ curves are parallel. 
Also observe that, by Lemma \ref{lem_3.11}, we can still pass arrows among the $l$ and $l'$ curves modulo linear combinations of diagrams of the same type with lower $n_1$ but higher $m$. Thus, if we only focus on the complexity $n_1 + n_2$, we can follow the proof of Proposition 3.16 in \cite{KBSM_S1} and conclude that $\mathcal{S}(\Sigma\times S^1)$ is generated by arrowed diagrams with $n_1+ n_2\leq 1$. 

The rest of this proof focuses on making $m=0$. In order to do this, we combine techniques in Section \ref{section_planar_case} of this paper and Proposition 3.12 of \cite{KBSM_S1}.

\textbf{Case 1: $n_1+n_2=0$.} Fix $m\geq 0$. Let $c$ be a separating curve cutting $\Sigma$ into a sphere with $N+1$ holes and one connected surface of genus $g>0$ with one boundary component. The diagrams in this case contain only boundary parallel curves and copies of $c$. Define $V^{\partial \Sigma}_m$ to be the formal vector space defined by such pictures with at most $m$ parallel separating curves. For each $a\in \Z$, define the diagram ${}_r D_a$ (resp. ${}_l D_a$) to be given by $m+1$ copies of $c$, $m$ of which have no arrows and where the closest to the positive genus surface (resp. to the holed sphere) has $a$ arrows. By Lemma \ref{lem_3.11}, in order to conclude this case, we only need to check ${}_r D_a\in V^{\partial \Sigma}_m$. 

Define $\Delta_+$, $\Delta_-$ and $\Delta_{+,m}$ as in Section \ref{section_planar_case}. 
First, observe that Lemma \ref{lem_3.14} implies that $\Delta^{N-1}_+ \left( {}_l D_a \right) \in V^{\partial \Sigma}_m$. Using the computation in the proof of Proposition \ref{prop_3.12}, we conclude that  $\Delta^{N-1}_{+,m} \left( {}_r D_a \right) \in V^{\partial \Sigma}_m$. On the other hand, Lemma 3.14 of \cite{KBSM_S1} gives us $\Delta^{2g}_- \left( {}_r D_a \right)\in V^{\partial \Sigma}_m$. 
Hence, 
\[{}_r D_a = Id_{V}^{2g+N-1}\left( {}_r D_a\right) =\frac{1}{\left( A^{4m+2} - A^{-2} \right)^{2g+N-1} } \left(  A^{-1} \Delta_{+,m} + A^{4m+1} \Delta_- \right)^{2g+N-1}\left({}_r D_a\right) \in V^{\partial \Sigma}_m.\] 

\textbf{Case 2: $n_1+n_2 = 1$.} Fix $m\geq 0$. The diagrams in this case contain boundary parallel curves, some copies of $c$, and exactly one non-separating curve denoted by $\alpha$. Define $V^{\partial \Sigma}_m$ to be the formal vector space defined by such pictures with at most $m$ copies of $c$. For $a\in \Z$, define ${}_l D_a$, ${}_r D_a$ as in Case 1 with the addition of one copy of $\alpha$. In order to conclude this case, it is enough to show ${}_r D_a \in V^{\partial \Sigma}_m$.

Suppose that $\alpha$ has $x \in \Z$ arrows. For $a,b \in \Z$, define ${}_r E_{a,b}$ and ${}_l E_{a,b}$ to be $m$ copies of $c$ with no arrows and three copies of $\alpha$ with arrows arranged as in Figure \ref{fig_left_right}. 
We can define the map $s$ on the diagrams ${}_rE_{a,b}$ and ${}_l E_{a,b}$ by $s({}_* E_{a,b}) = {}_* E_{a+1,b+1}$. This way, the maps $\Delta_-$, $\Delta_+$, $\Delta_{+,m}$ are defined on the diagrams $D_a$ and $E_{a,b}$. Define $\Delta_{-,1}=A - A^{-3}s$. 
Using Lemma \ref{lem_3.11}, up to linear combinations of diagrams in $V^{\partial \Sigma}_m$, we obtain the following: 
\begin{align*}
 \Delta_{-}\left( {}_r E_{a,0}\right) = &A{}_r E_{a,0} - A^{-1} {}_rE_{a+1,1} \\ 
 \cong & A^{2x+1} {}_l E_{a,0} - A^{2(x-1)-1} {}_l E_{a+1,1} \\ 
 = & A^{2x} \left[ A {}_l E_{a,0} - A^{-3} {}_l E_{a+1,1}\right]\\ 
 = & A^{2x} \Delta_{-,1} \left( {}_l E_{a,0}\right).
\end{align*}
\begin{figure}[h]
\centering
\includegraphics[scale=.6]{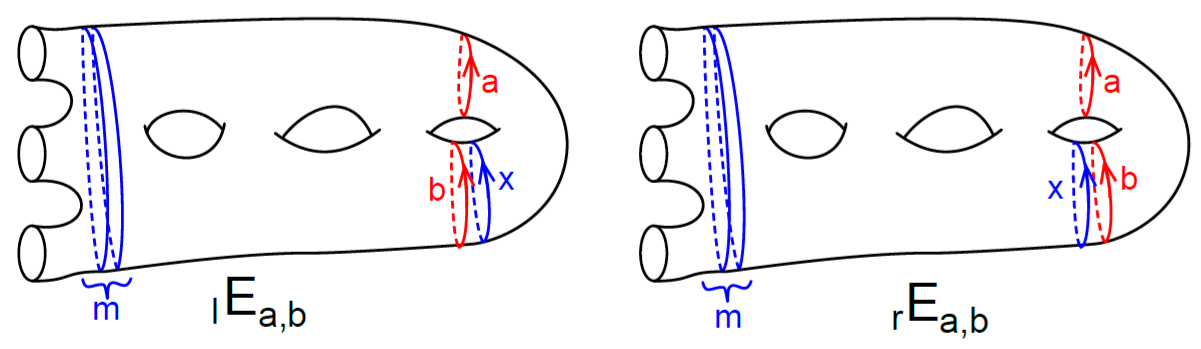}
\caption{${}_l E_{a,b,x}$ and ${}_r E_{a,b,x}$.}
\label{fig_left_right}
\end{figure}
Lemmas 3.13 and 3.14 of \cite{KBSM_S1} give us that $\Delta_- \left( {}_r E_{a,0} \right) \in V^{\partial \Sigma}_m$ and 
$\Delta^{2g-1}_- ({}_r D_a) = \Delta^{2g-1}_+ ({}_l E_{a,0})$. The first equation implies that $\Delta_{-,1} \left( {}_l E_{a,0}\right) \in V^{\partial \Sigma}_m$. This implication, together with the second equation and the fact that the $\Delta$-maps commute, lets us conclude that $\Delta_{-,1} \circ \Delta^{2g-1}_{-} ({}_r D_a) \in V^{\partial \Sigma}_m$.

Finally, notice that the argument in Case 1 implies that $\Delta^{N-1}_{+,m} \left( {}_r D_a \right) \in V^{\partial \Sigma}_m$. We also have the following relations between $\Delta$-maps. 
\[ A^{4m+3} \Delta_{-,1} + A^{-1} \Delta_{+,m} = \left( A^{4m+4} - A^{-2}\right) Id_{V} \] 
\[ A^{4m+1} \Delta_- + A^{-1} \Delta_{+,m} = \left( A^{4m+2} - A^{-2}\right) Id_{V}\]
\[ Id_{V} = \frac{1}{( A^{4m+4} - A^{-2})^N ( A^{4m+2} - A^{-2})^{2g-1}}\left( A^{4m+3} \Delta_{-,1} + A^{-1} \Delta_{+,m}\right)^N \circ \left( A^{4m+1} \Delta_- + A^{-1} \Delta_{+,m} \right)^{2g-1}\]
When expanding the last expression, we see that every summand has a factor of the form $\Delta_{-,1} \circ \Delta^{2g-1}_{-}$ or $\Delta^{N-1}_{+,m} $. Hence, by evaluating ${}_r D_a$, we obtain ${}_r D_a\in V^{\partial \Sigma}_m$ as desired. 
\end{proof}

\subsection{A generating set for $\mathcal{S}(\Sigma\times S^1)$}
To conclude the proof of finiteness for the Kauffman Bracket Skein Module of trivial $S^1$-bundles over surfaces with boundary, this section studies relations among non-separating simple closed curves.

\begin{lem}\label{lem_change_sides}
Any arrowed non-separating simple closed curve in $\Sigma$ can be written as follows in $\mathcal{S}(\Sigma\times S^1)$ 
\[\left( A-A^{-1}\right) \includegraphics[valign=c,scale=.4]{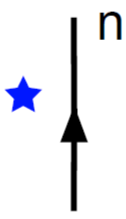} = A \includegraphics[valign=c,scale=.4]{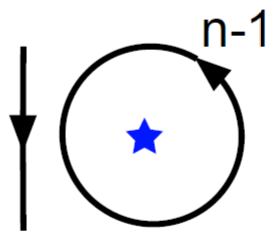} - A^{-1} \includegraphics[valign=c,scale=.4]{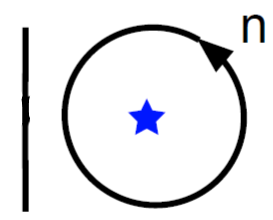}.\]
\end{lem}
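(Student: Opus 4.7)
The identity is the non-separating analogue of Equation \eqref{eqn_23}, and my plan is to derive it by applying the Kauffman skein relation (K1) at a crossing introduced locally on the non-separating curve $\alpha$.

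Because $\alpha$ is a non-separating simple closed curve in $\Sigma$, it has an embedded annular neighborhood $A(\alpha) \subset \Sigma$. Inside $A(\alpha) \times S^1$ I use R4 (the first line of Equation \eqref{eqn_23}) to cluster all $n$ arrows at a fixed point on $\alpha$, and I stage a short parallel copy of $\alpha$ just off to the side inside $A(\alpha)$. This produces a local picture of two parallel strands bounded by a small embedded bigon, with the $n$-arrow cluster sitting on one strand.

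Next I introduce a single transverse crossing between $\alpha$ and this parallel copy inside the bigon and apply (K1). The $A$-smoothing yields the configuration with $1$ arrow on one strand and $n-1$ on the other, matching the middle picture on the right-hand side, while the $A^{-1}$-smoothing yields the configuration with $0$ arrows on one strand and $n$ on the other. The coefficient $(A - A^{-1})$ on the left-hand side emerges from reading the original diagram in two ways via R4 and collecting terms, exactly as in the derivation of Equation \eqref{eqn_23}.

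I expect the main obstacle to be verifying that the local Equation \eqref{eqn_23}-style picture transfers cleanly to the closed-curve setting: in particular, that closing the bigon around $\alpha$ does not introduce an extra arrowed unknot or a boundary-parallel component. This is exactly where the non-separating hypothesis is used — the annular neighborhood $A(\alpha)$ is embedded in $\Sigma$, so the crossing-introduction move is purely local, and the resulting four-term identity collapses to the stated lemma after a routine rearrangement.
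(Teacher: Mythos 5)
There is a genuine gap. The local skein manipulation you describe (whether you set it up via R4/K1 or, as the paper does, via R5) only yields a \emph{four}-term identity of the shape
\[
A\,\gamma_n \;-\; A^{-1}\,\gamma_{n-2} \;=\; A\,\gamma_{1,n-1}\;-\;A^{-1}\,\gamma_{0,n},
\]
where $\gamma_n$ and $\gamma_{n-2}$ are the same non-separating curve carrying $n$ and $n-2$ arrows respectively. To pass from this to the stated two-term left-hand side $(A-A^{-1})\gamma_n$ you must know that $\gamma_n=\gamma_{n-2}$ in $\mathcal{S}(\Sigma\times S^1)$. That identity is \emph{not} a "routine rearrangement" and does not follow from anything happening inside an annular neighborhood of the curve: it is Proposition 4.1 of the Detcherry--Wolff paper \cite{KBSM_S1}, a global fact that holds precisely because the curve is non-separating (for boundary-parallel or separating curves the arrow count is an essential invariant, which is why the whole paper tracks it so carefully). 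Your proposal never invokes this input, so the collapse of the left-hand side is unjustified as written.

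Relatedly, you misidentify where the non-separating hypothesis enters. Every simple closed curve in an orientable surface has an embedded annular neighborhood, so that observation carries no content; the hypothesis is used only to guarantee $\gamma_n=\gamma_{n-2}$. Once you cite Proposition 4.1 of \cite{KBSM_S1} (or reprove it), your derivation of the four-term identity is essentially the paper's argument and the lemma follows.
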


\begin{proof}
Using the R5 relation, we obtain 
$\includegraphics[valign=c,scale=.35]{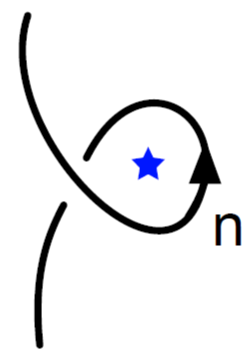}  = \includegraphics[valign=c,scale=.35]{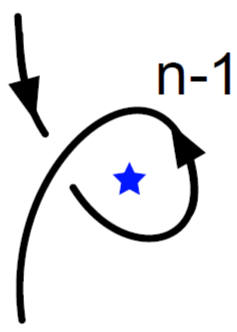}$. Thus, 
\[A \includegraphics[valign=c,scale=.4]{fig_37_n.png} - A^{-1} \includegraphics[valign=c,scale=.4]{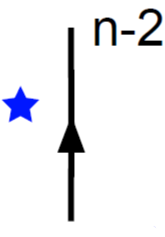} = A \includegraphics[valign=c,scale=.4]{fig_37_1n-1.png} - A^{-1} \includegraphics[valign=c,scale=.4]{fig_37_0n.png}.\]

Proposition 4.1 of \cite{KBSM_S1} states that non-separating curves with $n$ and $n-2$ arrows are the same in $\mathcal{S}(\Sigma\times S^1)$. Thus, the result follows.
\end{proof}

\begin{remark}\label{remark_change_sides}[Application of Lemma \ref{lem_change_sides}] 
Let $\gamma$ be a non-separating simple closed curve in $\Sigma$ and let $c\in \partial \Sigma$. Let $\widetilde \gamma$ be an arrowed diagram with one copy of $\gamma$ and some copies of $c$; think of $\gamma$ to be `on the right side' of $c$. Lemma \ref{lem_change_sides} states that, at the expense of adding more copies of $c$ and arrows, $\gamma$ is a linear combination of two diagrams where $\gamma$ is on the other side of $c$. 
\end{remark}

\begin{prop}\label{thm_boundary_case}
Let $\Sigma$ be an orientable surface of genus $g>0$ and $N>0$ boundary components. Let $D\subset \Sigma$ be a $(N+1)$-holed sphere containing $\partial \Sigma$, and let $\mathcal{F}$ be a collection of $2^{2g}-1$ non-separating simple closed curves in $\Sigma-D$ such that each curve in $\mathcal{F}$ represents a unique non-zero element of $H_1(\Sigma-D; \Z/2\Z)$. 
Let $\mathcal{B}$ be the collection $\{ \gamma \cup \alpha, U \cup \alpha \}$, where $\gamma$ is a curve in $\mathcal{F}$ zero or one arrow, $U$ is an arrowed unknot, and $\alpha$ is any collection of boundary parallel arrowed circles. 
Then $\mathcal{B}$ is a generating set for $\mathcal{S}(\Sigma\times S^1)$ over $\Q(A)$.
\end{prop}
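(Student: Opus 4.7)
\medskip
\textbf{Proof plan.} The plan is to combine Proposition \ref{prop_genus_case} with a homology-and-handle-slide reduction modeled on Section~4 of \cite{KBSM_S1}. By Proposition \ref{prop_genus_case}, every element of $\mathcal{S}(\Sigma\times S^1)$ is a $\Q(A)$-linear combination of arrowed diagrams of two shapes: $U\cup \alpha$, with $U$ an arrowed unknot and $\alpha$ a collection of boundary-parallel arrowed circles, which already lie in $\mathcal{B}$; and $\gamma\cup\alpha$, with $\gamma$ a single arrowed non-separating simple closed curve in $\Sigma$. It therefore suffices to rewrite each diagram of the second shape as a combination of elements of $\mathcal{B}$ and diagrams of strictly lower complexity (in the sense of Section \ref{section_non_planar_case}), and then induct on complexity.

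For $\gamma\cup\alpha$ the first reduction normalizes $\gamma$. Proposition~4.1 of \cite{KBSM_S1}, invoked in the proof of Lemma \ref{lem_change_sides}, states that arrowed non-separating curves with $n$ and $n-2$ arrows are equal in $\mathcal{S}(\Sigma\times S^1)$, so we may assume $\gamma$ carries either $0$ or $1$ arrow. Next I would use Lemma \ref{lem_change_sides} and Remark \ref{remark_change_sides} to move $\gamma$ relative to the $\partial$-parallel components: each swap exchanges the side of $\gamma$ across a boundary-parallel curve at the cost of adding extra copies of that curve (absorbed into $\alpha$) together with strictly simpler diagrams. Iterating this maneuver rewrites $\gamma\cup\alpha$ as a combination of diagrams $\gamma'\cup\alpha'$ with $\gamma'\subset \Sigma-D\cong \Sigma_{g,1}$, plus lower-complexity correction terms.

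The heart of the proof is then to replace $\gamma\subset\Sigma-D$ by the representative $\gamma_0\in\mathcal{F}$ of its class $[\gamma]\in H_1(\Sigma-D;\Z/2\Z)$, modulo $\mathcal{B}$ and diagrams of strictly lower complexity. A Mayer--Vietoris argument (capping $\Sigma-D$ with the planar piece $D$) shows that the inclusion $\Sigma-D\hookrightarrow \Sigma$ induces an injection on $H_1(-;\Z/2\Z)$; since $\gamma$ is non-separating in $\Sigma$, its class $[\gamma]$ is non-zero in $H_1(\Sigma-D;\Z/2\Z)$ and there is a unique $\gamma_0\in\mathcal{F}$ with $[\gamma_0]=[\gamma]$. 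I would then induct on the geometric intersection number $i(\gamma,\gamma_0)$, mimicking the closed-surface argument of Section~4 of \cite{KBSM_S1}. When $i(\gamma,\gamma_0)=0$, the disjoint $\Z/2\Z$-homologous curves $\gamma,\gamma_0$ cobound a subsurface of $\Sigma-D$ on which Kauffman-bracket band sums (of the type in Lemma \ref{lem_change_sides}) convert $\gamma$ into $\gamma_0$ up to extra $\partial$-parallel components and lower-complexity terms. When $i(\gamma,\gamma_0)>0$, resolving an innermost subarc of $\gamma\setminus\gamma_0$ with the skein relation (K1) produces a non-separating $\gamma'$ with $[\gamma']=[\gamma]$ and $i(\gamma',\gamma_0)<i(\gamma,\gamma_0)$, plus strictly simpler terms, driving the induction.

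The main obstacle is the bookkeeping of this handle-slide induction in the bounded setting: one must verify that each skein resolution produces either a diagram of the form $\gamma'\cup\alpha'$ with strictly smaller $i(\gamma',\gamma_0)$ and arrow count that can be re-normalized to $0$ or $1$, or a diagram of strictly lower complexity in the sense of Section \ref{section_non_planar_case}; the extra boundary-parallel components thrown off at each step are harmless because $\alpha$ is arbitrary in the definition of $\mathcal{B}$. Once this bookkeeping is in place the induction terminates and yields the desired expression of every generator in $\mathcal{B}$.
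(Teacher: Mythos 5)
Your opening reductions coincide with the paper's: Proposition \ref{prop_genus_case} reduces everything to diagrams $U\cup\alpha$ and $\gamma\cup\alpha$, Proposition 4.1 of \cite{KBSM_S1} normalizes the arrow count on a non-separating curve to $0$ or $1$, and repeated use of Lemma \ref{lem_change_sides} pushes $\gamma$ off the collar $D$ into $\Sigma-D\cong\Sigma_{g,1}$. The divergence, and the gap, is at what you call the heart of the proof. The paper does not establish the statement ``$[\gamma_1]=[\gamma_2]$ in $H_1(\Sigma-D;\Z/2\Z)$ and equal arrow counts imply $\gamma_1=\gamma_2$ in $\mathcal{S}(\Sigma\times S^1)$'' by an induction on geometric intersection number; it imports it as Proposition 5.5 of \cite{KBSM_S1}, after observing that the Section 5 arguments there (which rest on the action of the mapping class group and explicit Dehn-twist computations) carry over to a surface with one boundary component because generators of $\pi_1(S_g,*)$ and $Mod(S_g)$ also work for $S_{g,1}$.

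Your proposed substitute for that input does not work as sketched. First, (K1) resolves crossings of a diagram, and a single simple closed curve $\gamma$ has none; the only crossings available are those of the union $\gamma\cup\gamma_0$, and resolving one of them yields an identity about the stacked link $\gamma\cdot\gamma_0$, not about $\gamma$ alone. Second, the homology bookkeeping fails: smoothing an intersection point of $\gamma$ with $\gamma_0$ produces curves in the class $[\gamma]+[\gamma_0]$, which is $0$ when $[\gamma]=[\gamma_0]$ --- i.e.\ separating curves --- not a non-separating $\gamma'$ with $[\gamma']=[\gamma]$ and smaller intersection with $\gamma_0$. Third, in the base case $i(\gamma,\gamma_0)=0$ the cobounding subsurface generally has positive genus, and Lemma \ref{lem_change_sides} only slides a curve across a boundary-parallel annulus via the R5 move; it is not a tool for carrying $\gamma$ across a handle. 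So the induction neither starts nor descends, and the genuinely hard content of the proposition is left unproved. The repair is either to cite Proposition 5.5 of \cite{KBSM_S1} together with the observation that its proof extends to $S_{g,1}$, or to reproduce its actual mechanism: invariance of the skein class of a non-separating curve under the subgroup of the mapping class group acting trivially on $H_1(\Sigma-D;\Z/2\Z)$, checked on a generating set.
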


\begin{proof}
By Proposition \ref{prop_genus_case}, we only need to focus on the non-separating curves.
Let $\widetilde \gamma$ be a non-separating simple closed curve in $\Sigma$. After using Lemma \ref{lem_change_sides} repeatedly, we can write $\widetilde \gamma$ as a linear combination of arrowed diagrams of the form $\gamma \cup \alpha$ where $\gamma$ is a non-separating curve in $\Sigma-D$ and $\alpha$ is a collection of boundary parallel curves. 
Observe that the work on Section 5 of \cite{KBSM_S1} holds for surfaces with connected boundary since generators for $\pi_1(S_g, *)$ and $Mod(S_g)$ also work for $S_{g,1}$. Thus, by Proposition 5.5 of \cite{KBSM_S1}, two non-separating curves $\gamma_1,\gamma_2 \subset \Sigma-D$ with the same number of arrows are equal in $\mathcal{S}(\Sigma\times S^1)$ if $[\gamma_1]=[\gamma_2]$ in $H_1(\Sigma-D; \Z/2\Z)$. The conditions on the number of arrows for non-separating curves follows from Propositions 4.1 of \cite{KBSM_S1}.
\end{proof}

\begin{thm}\label{thm_conj_3_boundary}
Let $\Sigma$ be an orientable surface with non-empty boundary. Then $\mathcal{S}(\Sigma\times S^1)$ is a finitely generated $\mathcal{S}(\partial \Sigma\times S^1, \Q(A))$-module of rank at most $2^{2g+1}-1$. 
\end{thm}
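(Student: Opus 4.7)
The plan is to upgrade the $\Q(A)$-generating set provided by Proposition~\ref{thm_boundary_case} (for $g\geq 1$) and Proposition~\ref{thm_planar_case} (for $g=0$) into an $\mathcal{S}(\partial\Sigma\times S^1,\Q(A))$-generating set by absorbing into the module action every skein that can be isotoped into a collar of the boundary.

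First I would dispose of the case $g=0$. Proposition~\ref{thm_planar_case} shows that $\mathcal{S}(\Sigma\times S^1)$ is spanned over $\Q(A)$ by arrowed unknots and boundary-parallel arrowed multicurves. Both kinds of generators can be realized inside $\partial\Sigma\times[0,\eps]\times S^1$: a boundary-parallel curve is already there, and an arrowed unknot lies in $D\times S^1$ for some disk $D\subset\Sigma$ which, by connectedness of $\Sigma$ and the existence of a boundary component, can be ambient-isotoped into a collar. Hence every generator equals the action of some element of $\mathcal{S}(\partial\Sigma\times S^1,\Q(A))$ on the empty link $\emptyset$, so $\{\emptyset\}$ generates $\mathcal{S}(\Sigma\times S^1)$ as a module. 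This gives rank at most $1=2^{2\cdot 0+1}-1$.

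For $g\geq 1$, Proposition~\ref{thm_boundary_case} produces the $\Q(A)$-generating set $\mathcal{B}=\{\gamma\cup\alpha,\;U\cup\alpha\}$, where $\gamma$ ranges over the $2^{2g}-1$ curves of $\mathcal{F}$ carrying $0$ or $1$ arrow, $U$ is an arrowed unknot, and $\alpha$ is any boundary-parallel arrowed multicurve. The same pushing argument shows that $U\cup\alpha$ is a boundary skein acting on $\emptyset$, while each $\gamma\cup\alpha$ is the boundary skein $\alpha$ acting on $\gamma$. Therefore $\mathcal{S}(\Sigma\times S^1)$ is generated over $\mathcal{S}(\partial\Sigma\times S^1,\Q(A))$ by the finite set
\[
\{\emptyset\}\cup\{\gamma^{(j)}:\gamma\in\mathcal{F},\,j\in\{0,1\}\},
\]
of cardinality $1+2(2^{2g}-1)=2^{2g+1}-1$.

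The only delicate point is verifying that, in the presence of the non-separating curves $\gamma$, the unknot $U$ and the boundary-parallel loops $\alpha$ may simultaneously be isotoped into a common collar of $\partial\Sigma\times S^1$ without crossing $\gamma$. This is straightforward in the setup of Proposition~\ref{thm_boundary_case}: the curves of $\mathcal{F}$ sit inside $\Sigma-D$, disjoint from the $(N+1)$-holed sphere $D\supset\partial\Sigma$, so the isotopy taking $U\cup\alpha$ into a collar of $\partial\Sigma$ can be performed entirely inside $D\times S^1$, away from $\gamma$.
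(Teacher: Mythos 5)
Your proof is correct and follows essentially the same route as the paper: reduce to the $\Q(A)$-generating set of Proposition~\ref{thm_boundary_case} (resp.\ Proposition~\ref{thm_planar_case} for $g=0$), absorb the arrowed unknots and $\partial$-parallel multicurves into the $\mathcal{S}(\partial\Sigma\times S^1,\Q(A))$-action, and count $1+2(2^{2g}-1)=2^{2g+1}-1$ generators. Your explicit treatment of the genus-zero case and of the collar isotopy is more detailed than the paper's two-line argument, but the substance is identical.
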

\begin{proof}
As a module over $\S(\partial M, \Q(A))$, we can overlook $\partial$-parallel subdiagrams. Proposition \ref{thm_boundary_case} implies that $\S(M)$ is generated by the empty diagram and diagrams in $\mathcal{F}$ with at most one arrow. 
\end{proof}


\section{Seifert Fibered Spaces}\label{section_SFS}

Seifert manifolds with orientable base orbifold can be built as Dehn fillings of $\Sigma\times S^1$ where $\Sigma$ is a compact orientable surface. A result of Przytycki \cite{fundamentals_KBSM} implies that their Kauffman bracket skein modules are isomorphic to the quotient of $\mathcal{S}(\Sigma\times S^1)$ by a submodule generated by curves in $\partial \Sigma \times S^1$ bounding disks after the fillings. In this section, we use these new relations to show the finiteness conjectures for a large family of Seifert fibered spaces. For details on the notation see next subsection.

\begin{thm}\label{thm_finite_SFS}
Let $M=M\left(g; b,\{(q_i,p_i)\}_{i=1}^n\right)$ be an orientable Seifert fibered space with non-empty boundary. Suppose $M$ has orientable orbifold base. Then, $\mathcal{S}(M)$ is a finitely generated $\mathcal{S}(\partial M, \Q(A))$-module of rank at most $(2^{2g+1}-1) \prod_{i=1}^n (2q_i-1)$.
\end{thm}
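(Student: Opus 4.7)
The plan is to combine Theorem \ref{thm_conj_3_boundary} with Przytycki's presentation of the skein module of a Dehn filling (cited at the start of Section \ref{section_SFS}) and then analyze, torus by torus, how the filling relation reduces the $\S(T_i,\Q(A))$-action to a finite span. First, Theorem \ref{thm_conj_3_boundary} already supplies $2^{2g+1}-1$ generators of $\S(\Sigma\times S^1)$ over $\S(\partial(\Sigma\times S^1),\Q(A))$. Splitting the boundary as
\[ \partial(\Sigma\times S^1) \;=\; \partial M \,\sqcup\, T_1 \sqcup\cdots\sqcup T_n, \]
gives $\S(\partial(\Sigma\times S^1),\Q(A)) \cong \S(\partial M,\Q(A)) \otimes \bigotimes_{i=1}^n \S(T_i,\Q(A))$. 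So every class in $\S(M)$ is an $\S(\partial M,\Q(A))$-linear combination of the $2^{2g+1}-1$ basic generators of $\mathcal{B}$, each decorated by some element of $\bigotimes_i \S(T_i,\Q(A))$ supported on boundary-parallel arrowed curves around the $T_i$. It therefore suffices to show that, modulo Przytycki's filling relations, the decoration on each $T_i$ ranges over a $\Q(A)$-span of at most $2q_i-1$ classes; multiplying yields the announced bound $(2^{2g+1}-1)\prod(2q_i-1)$.

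To carry out the torus-by-torus reduction, I would use the local/global relations announced for Section \ref{section_SFS} in the outline. The Dehn filling forces a curve on $T_i$ of slope $q_i\mu_i+p_i\lambda_i$ to bound a disk in the attached solid torus $V_i$. This is an instance of Equation \eqref{eqn_23} applied to the $q_i$ parallel strands coming from the $\mu_i$-direction, with the $p_i\lambda_i$-twist encoded by arrow counts in the diagrammatic language of Section \ref{section_prelims}. Combined with the arrow-manipulation toolbox of Section \ref{section_relations_pants}---Lemma \ref{lem_popping_arrows} to redistribute arrows, Lemma \ref{lem_flip_unknot} to invert arrow direction at the price of unknots of smaller total arrow count, Lemmas \ref{lem_merging_unknots} and \ref{lem_pushing_arrows} to merge or transfer arrows between parallel strands, and Lemma \ref{lem_crossing_borders} to cross the boundary of $T_i$---the filling relation becomes a recursion on the number of arrows of a boundary-parallel curve on $T_i$. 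The goal is to show the recursion reduces every arrow count $k\in\Z$ to a representative in $\{-(q_i-1),\ldots,q_i-1\}$, which accounts for $2q_i-1$ residual classes.

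Carrying this out, I would follow the strategy of Detcherry--Wolff in their treatment of $\Sigma_g\times S^1$: set up a formal $\Q(A)$-vector space $V_i$ spanned by arrowed decorations on $T_i$, equipped with the shift-by-two operator $s$, and exhibit two commuting operators $\Delta^+_i$ and $\Delta^-_i$ (analogues of the $\Delta$-maps of Definition \ref{defn_delta}) that drop the arrow count when restricted to the subspace generated by the filling relation. A degree count or a Bezout identity of the form $\alpha\,\Delta^+_i+\beta\,\Delta^-_i=(\text{nonvanishing})\cdot\mathrm{Id}$, iterated $2q_i-1$ times, should collapse any decoration to the stated canonical range, with the factor $2q_i-1$ appearing as the degree of the characteristic polynomial of the recursion induced by the $q_i$-fold wrap of the bounding disk. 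Once the local statement is established for one $T_i$ while treating the other decorations as opaque coefficients, applying it successively for $i=1,\ldots,n$ and then using Theorem \ref{thm_conj_3_boundary} multiplicatively gives the generating set of size $(2^{2g+1}-1)\prod_{i=1}^n(2q_i-1)$.

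The main obstacle, as I anticipate it, is the explicit local reduction at a single exceptional fiber when $q_i\geq 2$. Unlike the case $q_i=1$, where the filling slope is $\mu_i+p_i\lambda_i$ and a single application of a $(1,p_i)$ analogue of Equation \eqref{eqn_23} suffices (this is presumably what powers Theorem \ref{conj2_SFS}), for $q_i\geq 2$ the bounding disk meets a neighborhood of $T_i$ in $q_i$ parallel strands, so resolving the filling relation via $(K1)$ produces $2^{q_i}$ terms that need to be reorganized; the delicate part is verifying that, after using Lemmas \ref{lem_popping_arrows}--\ref{lem_crossing_borders} to consolidate arrows, the effective recursion on the single arrow-count variable has order exactly $2q_i-1$ and leading coefficient invertible in $\Q(A)$ (specifically, of the form $1-A^{2m}$). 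This invertibility is what lets us divide, analogously to the Bezout step at the end of the proof of Proposition \ref{prop_3.12}, and it is the reason for the standing assumption on $\R$ mentioned in the introduction.
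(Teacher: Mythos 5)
Your high-level skeleton matches the paper's: reduce to the generating set of Proposition \ref{thm_boundary_case} / Theorem \ref{thm_conj_3_boundary}, invoke Przytycki's presentation of the skein module of a Dehn filling, and then argue torus by torus that the decoration near each exceptional fiber spans at most $2q_i-1$ classes. But the local mechanism you propose at each $T_i$ has a genuine gap. You model the decoration near $T_i$ as a \emph{single} boundary-parallel curve with an arrow count $k\in\Z$, and you want a recursion of order $2q_i-1$ collapsing $k$ to a window $\{-(q_i-1),\dots,q_i-1\}$. This misses the dimension along which the filling relation actually acts: the decoration is an arbitrary arrowed multicurve, i.e.\ $l_i\geq 0$ \emph{parallel copies} of $\partial_i$ each carrying arrows, and it is the multiplicity $l_i$, not the arrow count, that the filling relation controls. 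The disk bounded by $q_i\mu_i+p_i\lambda_i$ meets a neighborhood of $T_i$ in $q_i$ parallel strands, so the resulting relation (the paper's $\Omega(q_i,p_i)$-relation, Section \ref{subsubsection_Omega_move}) only fires when $l_i\geq q_i$, and it rewrites such a diagram in terms of diagrams with $l_i'<l_i$ (Lemma \ref{lem_Omega_move}, Proposition \ref{lem_case_2}). It gives no relation at all among single loops with different arrow counts, so there is no order-$(2q_i-1)$ recursion on $k$ to be found; nor would $2q_i-1$ consecutive arrow values on a single loop span the residual decorations, since diagrams with $1,2,\dots,q_i-1$ parallel copies survive as independent generators. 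The factor $2q_i-1$ in the theorem is a count of pairs $(l_i,\eps_i)$ with $0\leq l_i<q_i$ and $\eps_i\in\{0,1\}$ arrows (the pair $(0,1)$ being impossible), where the bound $\eps_i\leq 1$ comes from the standardization Lemma \ref{lem_std_diagrams_B} independently of the filling.

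Relatedly, your $\Delta$-map/Bezout step is imported from the wrong part of the argument: in the paper those operators handle separating curves in $\Sigma\times S^1$ (Propositions \ref{prop_3.12} and \ref{prop_genus_case}) and the invertibility of $1-A^{2m}$ enters there, not in the exceptional-fiber reduction. What replaces your ``leading coefficient'' concern at an exceptional fiber is a different point you would still need to verify: resolving the $q_i-1$ crossings of the $\Omega(q_i,p_i)$-move produces $2^{q_i-1}$ states, and one must check that exactly one state retains $q_i$ concentric loops with uncancelled arrows (this uses $0<q_i<|p_i|$), while all others have strictly fewer loops and strictly fewer arrows; and one must then control the growth of the arrow sum under the rearrangements needed to put the arrows into the position required by the move (Proposition \ref{lem_case_2} tracks the bound $u'+\eps_{i_0}'\leq 2(u+|p_{i_0}|)$ precisely so the induction on $l_{i_0}$ terminates). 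Without an argument bounding the number of parallel copies $l_i$, your plan does not yield finite generation at all, let alone the stated rank.
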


\begin{thm}\label{conj2_SFS}
Seifert fibered spaces of the form $M\left(g; 1,\{(1,p_i)\}_{i=1}^{n}\right)$ satisfy Conjecture \ref{conjecture_strong_finiteness}. In particular, Conjecture \ref{conjecture_strong_finiteness} holds for $\Sigma_{g,1}\times S^1$. 
\end{thm}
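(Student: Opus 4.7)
The first step is a topological reduction: $M = M(g;1,\{(1,p_i)\}_{i=1}^n)$ is diffeomorphic to $\Sigma_{g,1}\times S^1$ as a 3-manifold. Each Dehn filling with slope $\mu_i+p_i\lambda_i$ and $q_i=1$ is, after the basis change $\mu_i'=\mu_i+p_i\lambda_i$, a standard disk-filling of the $i$-th boundary of the base. Hence the Seifert base orbifold reduces to the regular surface $\Sigma_{g,1}$, and because this base has non-empty boundary the $S^1$-bundle has vanishing Euler class, forcing $M\cong \Sigma_{g,1}\times S^1$. This reduces Theorem \ref{conj2_SFS} to Conjecture \ref{conjecture_strong_finiteness} for $\Sigma_{g,1}\times S^1$, in agreement with the ``in particular'' clause.

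Since $\partial M = T^2=\partial\Sigma_{g,1}\times S^1$ satisfies $\dim_{\Q} H_1(\partial M)=2$, each $\Sigma_i$ required by the conjecture must be an essential annulus in $T^2$. I would take $\Sigma_1=A_\mu$, an annular neighborhood of the meridian $\mu=\partial\Sigma\times\{pt\}$, and $\Sigma_2=A_\lambda$, an annular neighborhood of the fiber $\lambda=\{pt\}\times S^1$; their skein algebras $\mathcal{S}(A_\mu)=\Q(A)[\mu]$ and $\mathcal{S}(A_\lambda)=\Q(A)[\lambda]$ act on $\mathcal{S}(M)$ by adding parallel copies of $\mu$ (arrow-free boundary-parallel circles) and of $\lambda$ (arrowed unknots pushed into the collar of $\partial M$), respectively. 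Set $F_i=\mathcal{S}(A_i)\cdot S_i$ for finite seed sets $S_i\subset\mathcal{S}(M)$ to be specified.

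Starting from the $\Q(A)$-spanning set of Proposition \ref{thm_boundary_case}---elements $\gamma\cup\alpha$ with $\gamma$ in a finite collection and $\alpha$ a multi-curve of arrowed boundary-parallel circles $b_{k_j}$ (representing $(1,k_j)$-curves on $T^2$)---I would prove by induction on the total arrow count $K=\sum_j|k_j|$ that every such generator lies in $F_1+F_2$. The base case $K=0$ gives $\alpha=\mu^{|\alpha|}$, so $\gamma\cup\alpha=\mu^{|\alpha|}\gamma\in F_1$ once $\gamma\in S_1$. For the inductive step I would exploit the Frohman--Gelca identity $\lambda\cdot b_k = A\,b_{k+1}+A^{-1}b_{k-1}$, realized as a single skein resolution in the collar $T^2\times I\subset M$; solving for $b_k$ gives $b_k = A^{-1}\lambda b_{k-1} - A^{-2}b_{k-2}$, which expresses $\gamma\cup\alpha$ as $A^{-1}\lambda\cdot(\gamma\cup\alpha')-A^{-2}(\gamma\cup\alpha'')$ with strictly smaller $K$. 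Iterating reduces every $\gamma\cup\alpha$ to $\Q(A)[\lambda]$-combinations of diagrams whose boundary decoration uses only the base circles $b_{-1}$, $b_0=\mu$, and $b_1$, which can then be absorbed into $S_1\cup S_2$.

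The main obstacle I expect is ensuring the seeds stay finite across the induction. When the shift identity is applied to multi-component $\alpha$, the Frohman--Gelca resolution of products $b_{k}b_{k'}$ produces higher-slope curves $T_{p,q}$ with $p\geq 2$ that are not obviously in $F_1+F_2$. Closing the induction therefore requires either (i) leveraging relations special to $\mathcal{S}(\Sigma_{g,1}\times S^1)$ analogous to Proposition 4.1 of \cite{KBSM_S1}, which bounds arrow counts for non-separating curves modulo 2, or (ii) enlarging the finite collection of annular subsurfaces to include auxiliary slopes---annular neighborhoods of $T_{1,\pm 1}$, for example---so that the combined $\mathcal{S}(\Sigma_i)$-actions span the higher-slope curves modulo lower complexity. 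Verifying that some finite choice of annular subsurfaces and seeds suffices is the crux of the proof.
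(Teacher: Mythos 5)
Your opening reduction is sound and is actually a clean way to package what the paper does diagrammatically: a filling of slope $\mu_i+p_i\lambda_i$ with $q_i=1$ caps off the $i$-th orbifold boundary with a multiplicity-one fiber, and since the resulting base $\Sigma_{g,1}$ has boundary the circle bundle is trivial, so $M\cong\Sigma_{g,1}\times S^1$. (The paper achieves the same effect inside the skein module via the $\Omega(1,p_i)$-moves, which convert loops around the exceptional fibers into arrowed unknots.) Your choice of subsurfaces --- an annulus $A_\mu$ around the meridian and an annulus $A_\lambda$ around the fiber --- is also exactly the paper's choice of $\Sigma_1$ and $\Sigma_2$.

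The gap is the one you name yourself, and it is not a technicality: your entire reduction takes place in the collar $\partial M\times I$, using the product-to-sum identity $T_{0,1}T_{1,k}=AT_{1,k+1}+A^{-1}T_{1,k-1}$ of $\mathcal{S}(T^2\times I)$. For a single boundary-parallel component this does express $b_k$ in $\Q(A)[\lambda]\{b_0,b_1\}$, but for the actual generators $\mu^k\cdot\alpha_x$ coming from Proposition \ref{thm_boundary_case} and Proposition \ref{lem_case_2} the inserted $\lambda$ sits between the $k$ arrowless copies of $\mu$ and the arrowed one; commuting it out to the boundary (so that it can be read as part of the $\mathcal{S}(A_\lambda)$-action) costs commutators $T_{1,0}T_{0,1}-T_{0,1}T_{1,0}=(A^2-A^{-2})(\cdots)$ whose resolution produces $(p,q)$-curves with $p\geq 2$, and the induction does not close. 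No relation supported in the collar can close it, because the needed identity is global: the paper's Lemma \ref{lem_earthquake} transports a boundary-parallel curve with many arrows across the entire surface --- through the genus-$g$ part and around the other boundary components, using the $\Delta$-map machinery of Sections \ref{section_planar_case} and \ref{section_non_planar_case} together with the invertibility of $A^{-1}\Delta_{+,m}+A^{4m+1}\Delta_-$ --- to rewrite $\mu^k\cdot\alpha_x$ for $x\geq 4g+2n$ as an arrowed unknot union $\mu^k$ plus diagrams with fewer arrows. That is what caps the arrow count at $4g+2n$, making $F_1$ a finite-rank $\Q(A)[\mu]$-module, while $F_2$ absorbs only the genuine arrowed unknots (rank one over $\Q(A)[\lambda]$). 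Your alternatives (i) and (ii) gesture in this direction but supply neither the global relation nor a termination argument, so the crux of the theorem remains unproved in your proposal.
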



\subsection{Links in Seifert manifolds} 
Let $\Sigma$ be a compact orientable surface of genus $g\geq 0$ with $N\geq 0$ boundary components. Fix non-negative integers $n$, $b$ with $N=n+b$. Denote the boundary components of $\Sigma$ by $\partial_1, \dots, \partial_N$ and the isotopy class of a circle fiber in $\Sigma\times S^1$ by $\lambda = \{pt\}\times S^1$. For each $i=1, \dots, n$, let $(q_i,p_i)$ be pairs of relatively prime integers satisfying $0<q_i<|p_i|$. 
Let $M\left(g; b,\{(q_i,p_i)\}_{i=1}^n\right)$ be the result of gluing $n$ solid tori to $\Sigma\times S^1$ in such way that the curve $p_i [\lambda]+ q_i[\partial_i] \in H_1(\partial_i \times S^1)$ bounds a disk. In summary, $\Sigma$ is the base orbifold of the Seifert manifold, $n$ counts the number of exceptional fibers, and is $b$ the number of boundary components of the 3-manifold. 

Let $M$ be an orientable Seifert manifold with orientable orbifold base. It is a well known fact that $M$ is homeomorphic to some $M\left(g; b,\{(q_i,p_i)\}_{i=1}^n\right)$ \cite{hatcher_3M}. Links in $M$ can be isotoped to lie inside $\Sigma\times S^1$. Thus, we can represent links in $M$ as arrowed diagrams in $\Sigma$ with some extra Reidemester moves. 
By Proposition 2.2 of \cite{fundamentals_KBSM} and Proposition \ref{thm_boundary_case}, $\mathcal{S}(M)$ is generated by the family of simple diagrams $\mathcal{B}= \{ \gamma \cup \alpha, U \cup \alpha \}$. 

\begin{defn}
Let $D\in \mathcal{B}$. Let $l_i\geq 0$ be the number of parallel copies of $\partial_i$ in $D$. Let $\eps_i\geq 0$ be the number of arrows (regardless of orientation) among all components of $D$ parallel to $\partial_i$. If $D$ contains an unknot $U$, denote by $u\geq 0$ the number of arrows in $U$. If $D$ contains a non-separating loop, let $u=0$. 
The \textbf{absolute arrow sum} of $D$ is the total number of arrows among its separating loops $s:= u + \sum_i \eps_i$. 
$D$ is \textbf{standard} if $0\leq \eps_i \leq 1$ for every $i=1,\dots, N$; and such arrows (if exist) lie in the loop furthest from the boundary. 
\end{defn}

\begin{lem}\label{lem_std_diagrams_B}
Every diagram $D$ in $\mathcal{B}$ is a $\Z[A^{\pm1}]$-linear combination of standard diagrams $D'$ satisfying $s'\leq s$ and $l_i'\leq l_i, \forall i$.
\end{lem}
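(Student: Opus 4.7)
The plan is to prove the lemma by induction on the lexicographic complexity $\left(\sum_i l_i,\, \text{number of arrows on non-outermost loops},\, \sum_i \epsilon_i\right)$, processing two sources of non-standardness: (a) arrows residing on a non-outermost parallel loop of some stack, and (b) the outermost loop of a stack carrying more than one arrow. Since $D = \gamma \cup \alpha$ or $D = U \cup \alpha$, only the boundary-parallel portion $\alpha$ can be non-standard; $\gamma$ already carries zero or one arrow, and $u$ is unconstrained in standard diagrams.

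To resolve (a), I apply Lemma \ref{lem_pushing_arrows} between two adjacent parallel loops of stack $\partial_i$ with arrow counts $(a,b)$, choosing the variant among (i)--(iv) that shifts an arrow from the inner loop outward and respects the signs of $a$ and $b$. The lemma rewrites the pair as $A^{\pm 2}(a\mp 1, b\pm 1)$ plus two merged single-loop terms in which the two loops collapse to one concentric curve, strictly reducing $\sum_i l_i$. The sign choice guarantees $|a\mp 1|+|b\pm 1| \leq |a|+|b|$, so $\sum_i \epsilon_i$ does not grow on the same-$\sum l_i$ summand; the merged summands have strictly smaller $\sum_i l_i$ and fall under the inductive hypothesis. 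Iterating this step concentrates all arrows of stack $i$ on the outermost loop. To resolve (b), I apply Lemma \ref{lem_popping_arrows} to the outermost loop carrying $m$ arrows, using a strand of an adjacent concentric curve (typically the next-innermost parallel loop, or a nearby component of the diagram) as the vertical strand. By the recursive expansion underlying Equation \eqref{eqn_23+}, each application strictly decreases $|x|$ in the recursively produced terms, so finitely many iterations force $|x| \leq 1$. Any arrows scattered onto an inner loop are re-consolidated outward via step (a), feeding the induction at strictly smaller complexity. For $D = U \cup \alpha$, Lemma \ref{lem_flip_unknot} is available to normalise the sign of arrows on $U$ modulo unknots with strictly fewer arrows, should a sign adjustment be needed.

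The main obstacle is the book-keeping on $s$: the merged terms from Lemma \ref{lem_pushing_arrows} may carry single-loop arrow counts $|b-a+2|$ exceeding the $|a|+|b|$ of the original pair, so $s$ alone is not monotone under the manipulation. The lexicographic ordering sidesteps this, since those terms drop strictly in $\sum_i l_i$ and are controlled by the inductive hypothesis independently of $s$. The target inequality $s' \leq s$ for standard outputs then follows from the bound $\sum_i \epsilon_i' \leq |\{i : \epsilon_i > 0\}| \leq \sum_i \epsilon_i$ together with $u' = u$, which holds because our operations act locally within a single stack and never introduce arrows into stacks originally free of them.
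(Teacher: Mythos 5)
Your toolbox is essentially the paper's (its proof is a one-line citation of Proposition 4.1 of \cite{KBSM_S1} together with Lemmas \ref{lem_popping_arrows}, \ref{lem_merging_unknots} and \ref{lem_pushing_arrows}), but as written your argument has two genuine gaps. The first concerns the bound $s'\leq s$. You concede that the merged debris terms of Lemma \ref{lem_pushing_arrows} may carry $|b-a+2|>|a|+|b|$ arrows and propose to control them by the inductive hypothesis ``independently of $s$.'' But the conclusion of the lemma is a bound on $s'$ relative to the \emph{original} $s$: if a debris term has arrow sum $s_1>s$, the inductive hypothesis only yields standard diagrams with arrow sum at most $s_1$, and the inequality $s'\leq s$ is lost on that branch. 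Induction on $\sum_i l_i$ gives termination, not the estimate. The repair is exactly Remark \ref{remark_pushing_arrows}: for any pair $(a,b)$ one of the four variants of Lemma \ref{lem_pushing_arrows} simultaneously moves an arrow off the loop you want to empty, decreases $|a|$, and produces debris with $|b-a\pm2|\leq|a|+|b|$ (use (i)/(iii) when $a>0$ or $b<0$, and (ii)/(iv) when $a<0$ or $b>0$). With that choice every summand, including the merged ones, satisfies $s'\leq s$ \emph{before} the induction is invoked, and the bound then propagates.

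The second gap is that you never dispose of the surplus arrows. Standardness requires $\eps_i\leq 1$, where $\eps_i$ counts arrows over the \emph{entire} stack of loops parallel to $\partial_i$, so a stack carrying many arrows must shed all but at most one. Your steps (a) and (b) only shuttle arrows between loops of the same stack: Lemma \ref{lem_popping_arrows} leaves $y\in\{0,1\}$ on the loop but allows up to $|m|-y$ arrows on the adjacent strand, so popping inward and then ``re-consolidating outward via step (a)'' returns you to where you started (and your second complexity coordinate goes up and then back down, so the claimed strict decrease fails). The surplus must leave the stack: either it is destroyed in pairs by the $m-2$ terms of Equation \eqref{eqn_23}, or it is merged onto the unknot $U$ via Lemma \ref{lem_merging_unknots} (which you do not invoke), where $u$ is unconstrained by standardness, or, for diagrams $\gamma\cup\alpha$ where $u=0$ by definition, it is transferred to the non-separating curve $\gamma$ and reduced modulo $2$ by Proposition 4.1 of \cite{KBSM_S1} (arrows on $\gamma$ do not count toward $s$). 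This also shows your closing claims that $u'=u$ and that the operations stay within a single stack are false in general, so the final inequality chain does not stand as written.
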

\begin{proof}
Follows from Proposition 4.1 of \cite{KBSM_S1} and Lemmas \ref{lem_popping_arrows}, \ref{lem_merging_unknots}, and \ref{lem_pushing_arrows}.
\end{proof}
The rest of this section is devoted to understand how the quantities $s$ and $l_i$ behave under certain relations in $\mathcal{B}$. We use Lemma \ref{lem_std_diagrams_B} implicitly to rewrite any relation in terms of standard diagrams with bounded sums $s$ and $l$. 

\begin{remark}[Moving arrows]\label{remark_pushing_arrows}
We think of Lemma \ref{lem_pushing_arrows} as a set of moves that change the arrows between consecutive circles at the expense of adding `debris' terms. Observe that $|b-a+2| \leq |a| + |b|$ as long as $b<0$ or $a>0$. In particular, the debris terms in the equations of Lemma \ref{lem_pushing_arrows} parts (i) and (iii) will have absolute arrow sums bounded above by the LHS whenever $b<0$ or $a>0$. The same happens with parts (ii) and (iv) when $b>0$ or $a<0$. This can be summarized as follows: ``We can move arrows between consecutive nested loops without increasing the arrow sum nor $l_i$."
\end{remark}

\subsubsection{Local moves around an exceptional fiber}\label{subsubsection_Omega_move}
Fix an index $i=1,\dots, n$. By construction, there is a loop $\beta_i$ in the torus $\partial_i \times S^1$ bounding a disk $B_i$ in $M$; $\beta_i$ homologous to $\left( p_i[\lambda] + q_i [\partial_i]\right) \in H_1(\partial_i \times S^1,\Z)$. Following \cite{KBSM_prism}, we can slide arcs in $\Sigma\times S^1$ over the disk $B_i$ and get new Reidemeister moves for arrowed projections in $\Sigma\times S^1$. We obtain a new move, denoted by $\Omega(q_i, p_i)$ (see Figure \ref{fig_Omega_move}).
\begin{figure}[h]
\centering
\includegraphics[scale=.35]{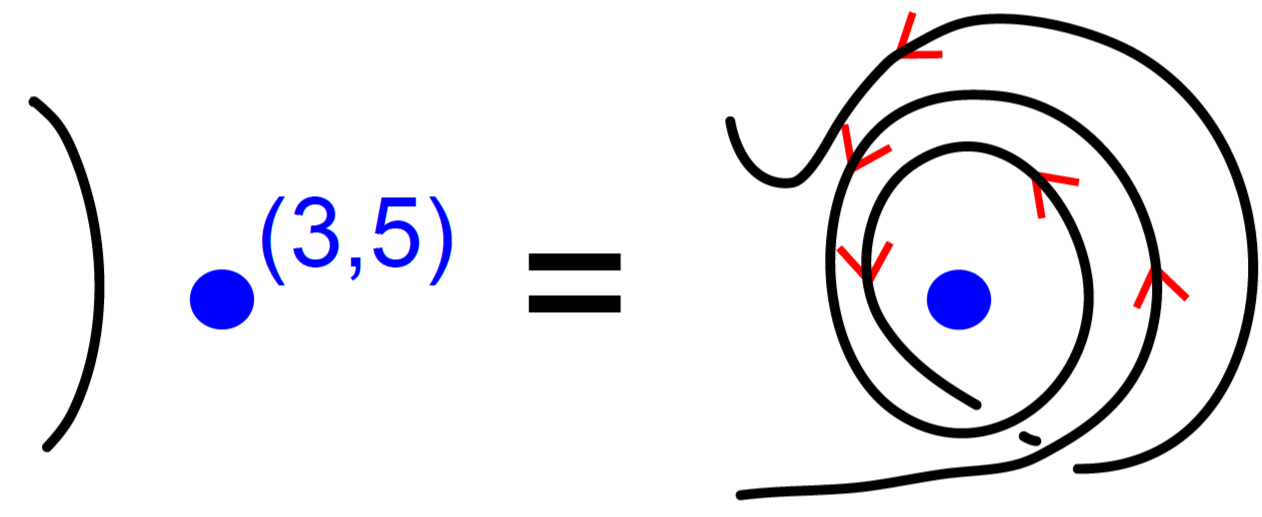}
\caption{$\Omega(q_i,p_i)$ is obtained by drawing $q_i$ concentric circles and $p_i$ arrows equidistributed. Notice that the orientation of the arrows in the RHS is determined by the sign of $p_i$.}
\label{fig_Omega_move}
\end{figure}

We can perform the $\Omega(q_i,p_i)$-move on an unknot near the boundary $\partial_i$ and resolve the $q_i-1$ crossings with K1 relations. 
Since $0<q_i<|p_i|$, there is only one state with orientations of the arrows not cancelling. This unique state has exactly $q_i$ concentric loops while the other states have strictly fewer loops and no more than $|p_i|-2$ arrows. 
We then obtain an equation in $\mathcal{S}(M)$ called the \textbf{$\Omega(q_i,p_i)$-relation}. Figure \ref{fig_Omega_relation} shows a concrete example of this equation.

\begin{remark}[The $\Omega(q_i,p_i)$-relation]\label{remark_Omega_move}
The $\Omega(q_i,p_i)$-relation lets us write a diagram with $q_i$ concentric loops and $|p_i|$ arrows arranged in a particular way as a $\Z[A^{\pm}]$-linear combination of diagrams with $0\leq l_i< q_i$ concentric circles and $0\leq \eps_i<|p_i|$ arrows (see Figure \ref{fig_Omega_relation}). The LHS has $|p_i|$ arrows oriented in the same direction depending on the sign of $p_i$; counterclockwise if $p_i>0$ and clockwise otherwise. Notice that the condition $0<q_i<|p_i|$ implies that every parallel loop in the LHS has at least one arrow. 
\end{remark}

The special arrangement of arrows in the LHS of the $\Omega(q_i,p_i)$-relation is important and depends on the pair $(q_i, p_i)$. In practice, we rearrange the arrows around the outer $q_i$ copies of $\partial_i$ to match with the LHS of the $\Omega(q_i,p_i)$-relation. Lemma \ref{lem_Omega_move} uses this idea in a particular setup. 

\begin{figure}[h]
\centering
$\includegraphics[valign=c,scale=.45]{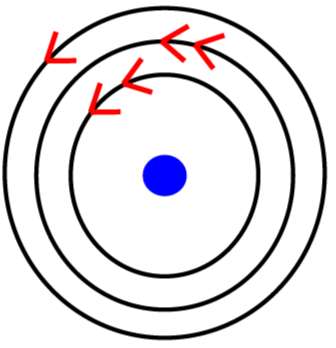} = A^2 \includegraphics[valign=c,scale=.45]{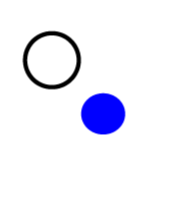} - A^{2} \includegraphics[valign=c,scale=.45]{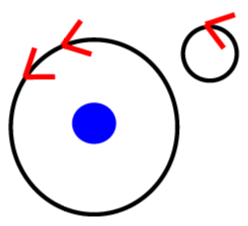} - A^2 \includegraphics[valign=c,scale=.45]{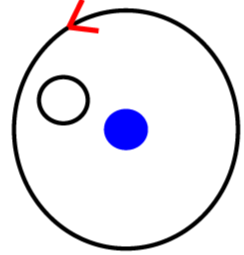} - A^4 \includegraphics[valign=c,scale=.45]{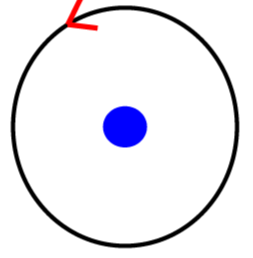}$
\caption{$\Omega(3,5)$-relation.}
\label{fig_Omega_relation}
\end{figure}

\begin{lem}\label{lem_Omega_move}
The following equation in $\mathcal{S}(M)$ relates identical diagrams outside a neighborhood of $\partial_1$. Let $D\in \mathcal{B}$ and $x\geq |p_1|$. 
Suppose that $l_1\geq q_1$, the loop furthest from $\partial_1$ has $x$ arrows with the same orientation as in the LHS of the $\Omega(q_1,p_1)$-relation, and no other loop in $D$ parallel to $\partial_1$ has arrows. Then $D$ is a sum of diagrams $D'\in \mathcal{B}$ with $l'_1<l_1$ and at most $x$ arrows.
\end{lem}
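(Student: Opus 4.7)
The strategy is to reshape the $x$ arrows on the outermost parallel copy of $\partial_1$ into the precise arrangement required by the LHS of the $\Omega(q_1,p_1)$-relation, and then apply that relation to reduce $l_1$.

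Order the $l_1$ parallel copies of $\partial_1$ as $L_{l_1},L_{l_1-1},\dots,L_1$ from outermost (furthest from $\partial_1$) to innermost, so that initially $L_{l_1}$ carries all $x$ arrows of the specified $\Omega$-orientation while $L_{l_1-1},\dots,L_1$ carry none. I would first apply Lemma \ref{lem_pushing_arrows} iteratively to push arrows one at a time inward: from $L_{l_1}$ to $L_{l_1-1}$, then from $L_{l_1-1}$ to $L_{l_1-2}$, and so on. Since each elementary push begins at a loop with a positive number of arrows of the correct orientation and the adjacent loop starts with zero, Remark \ref{remark_pushing_arrows} ensures that the push preserves the total arrow sum in the main term, keeps the number of parallel copies of $\partial_1$ unchanged in the main term, and produces debris diagrams each of which has strictly fewer parallel copies of $\partial_1$ and absolute arrow sum at most $x$.

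I would continue the redistribution until the main term displays, on the outermost $q_1$ loops $L_{l_1},\dots,L_{l_1-q_1+1}$, exactly the $|p_1|$-arrow arrangement that appears on the LHS of the $\Omega(q_1,p_1)$-relation described in Remark \ref{remark_Omega_move}, with the surplus $x-|p_1|$ arrows parked on $L_{l_1-q_1}$ (which exists because $l_1\geq q_1$; the boundary case $l_1=q_1$ forces $x=|p_1|$ in the setting where the lemma is applied, and needs no surplus to be stored). Then I would apply the $\Omega(q_1,p_1)$-relation to the outermost $q_1$ loops, replacing them with a $\Z[A^{\pm1}]$-combination of local diagrams with at most $q_1-1$ concentric copies of $\partial_1$ and at most $|p_1|-1$ arrows on them.

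A count then finishes the argument: the new main-term diagrams have at most $(l_1-q_1-1)+1+(q_1-1)=l_1-1$ parallel copies of $\partial_1$ and arrow sum at most $(x-|p_1|)+(|p_1|-1)<x$, while the debris diagrams from the first step satisfy $l'_1<l_1$ and arrow sum at most $x$ by construction. Finally, Lemma \ref{lem_std_diagrams_B} standardizes each summand without increasing $l'_1$ or the arrow count, yielding the expression of $D$ as the required sum of standard diagrams in $\mathcal{B}$. The main obstacle is the bookkeeping in the redistribution: choosing the sequence of elementary pushes so that the target arrangement on the outermost $q_1$ loops is reached exactly, that every intermediate push stays in the ``allowed direction'' of Remark \ref{remark_pushing_arrows}, and that the sign/orientation data of the arrows lines up with the prescribed LHS of the $\Omega(q_1,p_1)$-relation.
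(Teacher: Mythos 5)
Your overall strategy --- redistribute the $x$ arrows with Lemma \ref{lem_pushing_arrows} (controlling debris via Remark \ref{remark_pushing_arrows}), apply the $\Omega(q_1,p_1)$-relation to $q_1$ of the parallel loops, and then count loops and arrows --- is exactly the paper's argument, and the loop/arrow bookkeeping you carry out is correct where it applies.

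There is, however, one concrete gap: your treatment of the surplus $x-|p_1|$ arrows. You park them on $L_{l_1-q_1}$, and when $l_1=q_1$ (so that loop does not exist) you assert that ``the setting where the lemma is applied'' forces $x=|p_1|$. That is false: the lemma is stated for $l_1\geq q_1$ and $x\geq |p_1|$ independently, and in Proposition \ref{lem_case_2} it is invoked on diagrams $D_x\cup(\cup_y S_1)$ where $x$ can be as large as $2(u+|p_{1}|)$ while $l_1$ may equal $q_1$; so the case $l_1=q_1$, $x>|p_1|$ genuinely occurs and your construction has nowhere to store the surplus. The fix is to not park the surplus on a separate loop at all: the $\Omega(q_1,p_1)$-relation is obtained by sliding an \emph{arrowed} unknot over the disk $B_1$, so for any $j\geq 0$ one gets a relation whose left-hand side is the standard $q_1$-loop configuration with $j$ extra arrows (of the prescribed orientation) on its outermost circle, and whose remaining states still have fewer than $q_1$ loops and lose at least two arrows (no cancellation is prevented by the extra arrows since they share the orientation of the $|p_1|$ distributed ones). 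Arranging all $x$ arrows on the $q_1$ loops in this way --- standard counts on the inner $q_1-1$ circles, everything else on the outermost --- handles all $l_1\geq q_1$ uniformly and gives main terms with $l_1'\leq l_1-1$ and at most $x-2$ arrows, after which your count and the final standardization via Lemma \ref{lem_std_diagrams_B} go through as written.
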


\begin{proof}
Rearrange the arrows to prepare for the $\Omega(q_1, p_1)$-relation using Lemma \ref{lem_pushing_arrows}. Remark \ref{remark_pushing_arrows} explains that the debris terms in this procedure will have arrow sum at most $x$ and $l'_1<l_1$. After performing the $\Omega(q_1,p_1)$-move, we obtain diagrams with lesser loops $l'_1 < l_1$. Observe that the lower arrow sum is explained due to at least one pair of arrows getting cancelled; this always happens since $0<q_1<|p_1|$. In particular, we lose at least two arrows when performing the move. 
\end{proof}

\subsubsection{Global relations}

We now discuss relations among elements in $\mathcal{B}$ of the form $U \cup \alpha$. 
Lemma \ref{lem_earthquake} permits us to add new loops around each $\partial_i$ all of which have one arrow of the same direction. This move is valid as long as we have enough arrows on the unknot $U$; i.e. $u\geq 4g+2N$. The debris terms are $\Z[A^{\pm1}]$-linear combinations of standard diagrams with fewer arrow sum and $l'_i\leq l_i +1$. This move is key to prove Theorem \ref{conj2_SFS}. 

Consider the decomposition $\mathcal{P}_+$ of $\Sigma$ described in Figure \ref{fig_global_P+}. Set $\partial_0$ to be the left-most unknot in $\mathcal{P}_+$ oriented counterclockwise. 
As we did in Definition \ref{defn_linear_pants}, if $v_i\in \Z$ we will draw one copy of $\partial_i$ with $v_i$ arrows oriented as in $\mathcal{P}_+$, and do nothing if $v_i = \emptyset$. For $v \in \left(\Z\cup \{\emptyset\}\right)^{N+1}$, denote by $E_{v}$ the diagram obtained by drawing $\partial_i$ with $v_i$ arrows on it. For example, $E_{(b, \emptyset, \dots, \emptyset)}$ corresponds to the arrowed unknot $S_b$. 
\begin{figure}[h]
\centering
\includegraphics[scale=.5]{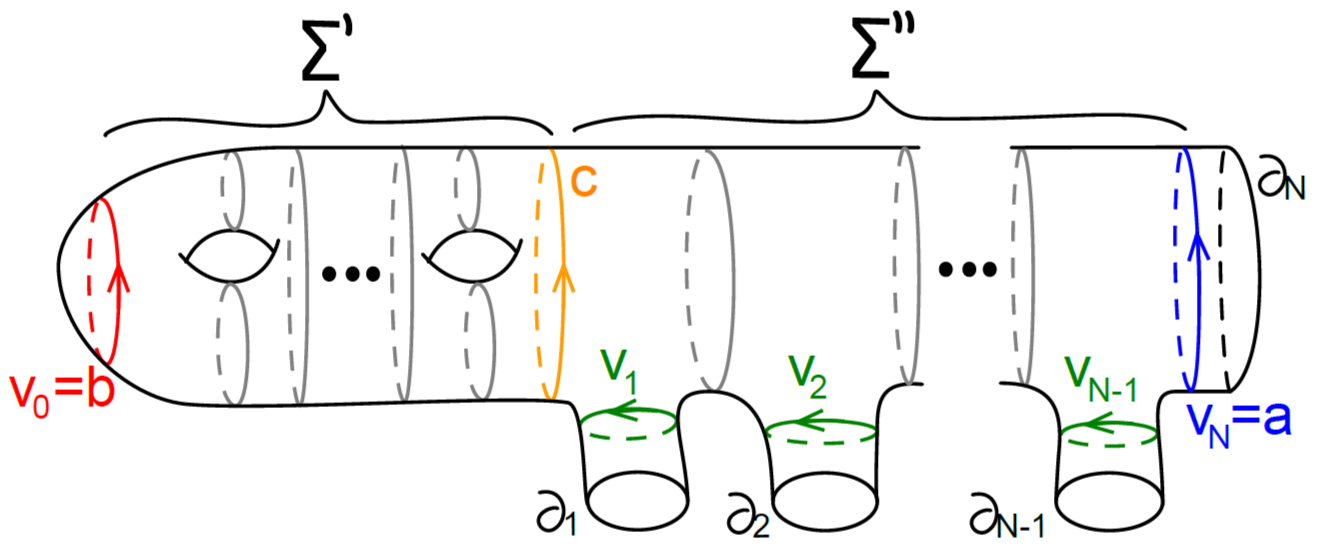}
\caption{$\mathcal{P}_+$ induces linear pants decompositions on $\Sigma''$ and sausage decompositions on $\Sigma'$.}
\label{fig_global_P+}
\end{figure}

We define the $\Delta$-maps from Definition \ref{defn_delta} on the family of diagrams $E_v$ with exactly one of $v_0$ and $v_N$ being empty. If $v_0=\emptyset$ and $v_N\in \Z$, define $s(E_v)=E_{(v_0, \dots, v_{N-1}, v_{N}+2)}$. If $v_0\in \Z$ and $v_N=\emptyset$, define $s(E_v)=E_{(v_0+2, \dots, v_{N-1}, v_{N})}$. 

\begin{lem} \label{lem_earthquake}
Let $a\geq 0$. The following equation in $\mathcal{S}(\Sigma\times S^1)$ holds modulo $\Z[A^{\pm1}]$-linear combinations of standard diagrams 
$E_{(\emptyset, \dots, \emptyset, a')}$ and $E_{(a'', 1,1, \dots, 1, \emptyset)}$ with $a'$, $a''+(N-1)$ integers in $[0,a+4g+2N-2)$.
\[E_{(\emptyset, \dots, \emptyset, a+4g+2N-2)} \cong (-1)^{N-1} A^{-4g-2N+4} E_{(a+4g+N-1, 1,1, \dots, 1, \emptyset)}.\]
\end{lem}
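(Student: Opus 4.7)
The plan is to iteratively push arrows from the rightmost boundary $\partial_N$ across the decomposition $\mathcal{P}_+$ all the way to the leftmost unknot $\partial_0$, combining the planar $\Delta_+$-technology from Section \ref{section_planar_case} with the sausage-crossing analogues established in Lemma 3.14 of \cite{KBSM_S1} (already invoked in Case 1 of Proposition \ref{prop_genus_case}). Arrow accounting immediately confirms the totals match: $(a+4g+2N-2) = (a+4g+N-1)+(N-1)$, so the principal term preserves the full arrow sum and everything else qualifies as debris.

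First, iterate the $\Delta_+$-type map across the $N-1$ pants of $\Sigma''$, using the explicit formula from Equation \eqref{eqn_1_planar}. This rewrites $E_{(\emptyset,\dots,\emptyset,a+4g+2N-2)}$ as a sum indexed by $e\in\{0,1\}^{N-1}$, where each entry $e_i$ records whether the crossing of the $i$-th pant deposits an arrow on the corresponding $\partial_i$. The unique term $e=(1,1,\dots,1)$ is the only one that places an arrow on every intermediate boundary; every other choice of $e$ leaves at least one $\partial_i$ empty, so after applying Lemma \ref{lem_std_diagrams_B} to re-standardize, it lands in the debris family $E_{(a'',1,\dots,1,\emptyset)}$ with $a''+(N-1)$ strictly less than $a+4g+2N-2$, or degenerates to $E_{(\emptyset,\dots,\emptyset,a')}$.

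Second, push the remaining bulk of the arrows across the $2g$ sausages of $\Sigma'$ using the sausage $\Delta$-maps of \cite{KBSM_S1}, exactly as in Case 1 of Proposition \ref{prop_genus_case}. Each sausage crossing again contributes a principal branch (carrying the arrows further left) plus debris terms with strictly smaller arrow sums, which are absorbed into $E_{(\emptyset,\dots,\emptyset,a')}$ terms of the allowed form. After the last sausage is crossed, the surviving principal branch deposits all of $a+4g+N-1$ arrows onto the leftmost unknot $\partial_0$, yielding the right-hand side diagram $E_{(a+4g+N-1,1,1,\dots,1,\emptyset)}$.

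The composed coefficient is obtained by multiplying the leading monomials contributed by the $N-1$ pant crossings with those from the $2g$ sausage crossings; the sign $(-1)^{N-1}$ comes from the pant side (each pant crossing contributes a $-A^{-1}$ factor to the $e_i=1$ branch, per Equation \eqref{eqn_1_planar}), while the exponent $-4g-2N+4$ accumulates the powers of $A$ from both sides. The main obstacle is purely bookkeeping: one must verify carefully that every non-principal branch produced at each step of the push belongs to one of the two permitted debris families with the claimed arrow-sum bound, and that the cumulative $A$-exponent from the pants portion composes correctly with the sausage portion to give precisely $-4g-2N+4$. This is a straightforward but tedious induction on the number of crossings performed, parallel to the arguments in Lemma \ref{equation_1_planar} and the $\Delta$-composition identities used in the proofs of Proposition \ref{prop_3.12} and Proposition \ref{prop_genus_case}.
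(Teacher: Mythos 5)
Your argument is essentially the paper's proof: apply Equation \eqref{eqn_1_planar} with $n=k_0=N-1$ to cross the $N-1$ pants of $\Sigma''$, pass to the sausage decomposition of $\Sigma'$ via Proposition 3.13 of \cite{KBSM_S1} to obtain $\Delta_+^{2g+N-1}\bigl(E_{(\emptyset,\dots,\emptyset,a)}\bigr)=\sum_{e\in\{0,1\}^{N-1}}(-1)^{o(e)}A^{z(e)-o(e)}\Delta_-^{2g}\bigl(E_{(a+o(e),e,\emptyset)}\bigr)$, and extract the top-arrow summand ($e=(1,\dots,1)$) from each side, which is exactly what the paper does. The one caveat is that you assert the final coefficient rather than compute it: multiplying the leading monomials ($A^{2g+N-1}$ on the left, $(-1)^{N-1}A^{-(N-1)}\cdot A^{-2g}$ on the right) gives $(-1)^{N-1}A^{-4g-2N+2}$ rather than the stated $(-1)^{N-1}A^{-4g-2N+4}$, a discrepancy of $A^{2}$ that lies in the lemma's statement itself and that the paper's own proof (which also skips this bookkeeping) does not resolve.
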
 

\begin{proof}
Observe first that $\mathcal{P}_+$ induces a linear pants decomposition on $\Sigma''$ as in Section \ref{section_planar_case}. Here, a copy of $\partial_N$ with $x\in \Z$ arrows, $E_{(\emptyset, \dots, \emptyset, x)}$, corresponds to the diagram $D^{N-1}_{x,(\emptyset, \dots, \emptyset)}$. Equation \eqref{eqn_1_planar} of Lemma \ref{equation_1_planar} with $n=k_0=N-1$ states the following 
\[  \Delta^{N-1}_+ \left( D^{N-1}_{a, (\emptyset, \dots, \emptyset)} \right)= \sum_{e \in \{0,1\}^{N-1}} (-1)^{o(e)} A^{z(e) - o(e)} D^{0}_{a+o(e), e} \quad . \] 
For any $x\in \Z$ and $v\in \{0,1\}^{N-1}$, the diagram $D^0_{x,v}$ contains a copy of the curve $c$ (see Figure \ref{fig_global_P+}) with $x$ arrows. 
Now, observe that $\mathcal{P}_+$ also induces a sausage decomposition of $\Sigma'$ (see \cite{KBSM_S1}). Using the notation in Section 3.3 of \cite{KBSM_S1}, the part of the diagram $D^0_{x,v}$ inside the subsurface $\Sigma'\subset \Sigma$ is denoted by $D^{2g}_{x}$. Proposition 3.13 of \cite{KBSM_S1} implies the equation $\Delta^{2g}_+(D^{2g}_x) = \Delta^{2g}_{-}(D^{0}_x)$, where $D^0_x$ is a copy of the left-most unknot $\partial_0$ (red loop) in $\mathcal{P}_+$ with $x$ arrows. Putting everything together, we obtain the following relation in $\mathcal{S}(\Sigma\times S^1)$:
\begin{equation*}
\Delta^{2g+N-1}_+ (E_{(\emptyset, \dots, \emptyset,a)}) = \sum_{e \in \{0,1\}^{N-1}} (-1)^{o(e)} A^{z(e) - o(e)} \Delta^{2g}_- (E_{(a+o(e), e_1, \dots, e_N, \emptyset)}).
\end{equation*}
The result follows by taking the summads on each side with the most number of arrows. 
\end{proof}

\subsection{Proofs of Theorems \ref{thm_finite_SFS} and \ref{conj2_SFS}}\label{section_proof_41}
Recall that $\mathcal{S}(M)$ is generated by all standard diagrams, and such diagrams are filtered by the complexity $(s,\sum_i l_i)$ in lexicographic order. Here, $s=u+\sum_i \eps_i$ is the absolute arrow sum and $\sum_i l_i$ is the number of boundary parallel loops. Throughout the argument we will have debris terms with lower complexity $(s',\sum_i l'_i)$; on each of those terms, we can perform a series of combinations of Lemmas \ref{lem_popping_arrows},  \ref{lem_flip_unknot}, \ref{lem_merging_unknots}, and \ref{lem_pushing_arrows} in order to write them in terms of standard diagrams with complexities $s''\leq s'$ and $l''_i\leq l'_i$.

Let $D\in \mathcal{B}$ be a diagram. Suppose that $D$ is of the form $D=\gamma \cup \alpha$, where $\gamma$ is an non-separating simple closed curve with at most one arrow and $\alpha$ is a collection of arrowed boundary parallel loops. We can rewrite $D$ in $\mathcal{S}(M)$ as $D=\frac{-1}{(A^2+A^{-2})} (D\cup U)$ where $U$ is a small unknot with no arrows. Proposition \ref{lem_case_2} focuses on the subdiagram $U\cup \alpha$ near a fixed boundary component. 

\begin{prop}\label{lem_case_2}
Let $D\in \mathcal{B}$ be a standard diagram with $l_{i_0}\geq q_{i_0}$ for some ${i_0}\in \{1,\dots, n\}$. Then $D$ is a linear combination of some standard diagrams $D'$ identical to $D$ outside a neighborhood of $\partial_{i_0}$, satisfying 
\[l'_{i_0}< l_{i_0} \text{ and } u'+\eps'_{i_0}\leq 2(u + |p_{i_0}|). \]
\end{prop}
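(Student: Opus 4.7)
The plan is to invoke Lemma \ref{lem_Omega_move} on $D$ after first placing at least $|p_{i_0}|$ arrows on the outermost $\partial_{i_0}$-parallel loop of $D$ in the orientation demanded by the LHS of the $\Omega(q_{i_0},p_{i_0})$-relation. Since $D$ is standard, that loop starts with only $\eps_{i_0}\leq 1$ arrow, so the bulk of the work is assembling enough arrows in the right place while controlling the loop count and arrow budget outside the neighborhood of $\partial_{i_0}$.

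First I would handle the unknot. If $D=U\cup \alpha$, I slide $U$ to a position concentric with, and just outside, the $l_{i_0}$ copies of $\partial_{i_0}$-parallel loops, and use Lemma \ref{lem_flip_unknot} to orient its $u$ arrows to match the LHS of the $\Omega$-relation, paying only lower-complexity debris. Then I apply Lemma \ref{lem_crossing_borders} (the boundary-crossing specialization of Equation \eqref{eqn_23}) repeatedly to push $U$ inward across the outermost $\partial_{i_0}$-parallel loop, transferring all $u$ of its arrows onto that loop. Each push spawns debris with strictly smaller arrow sum, and the unknot eventually disappears (or absorbs into a scalar via (K2)). After this step, the outermost $\partial_{i_0}$-parallel loop carries up to $u+\eps_{i_0}$ arrows correctly oriented, and every debris summand is restandardized via Lemma \ref{lem_std_diagrams_B} so that the quantities $s$ and $l_i$ both stay bounded by what we started with. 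If instead $D=\gamma\cup\alpha$, then $u=0$ by convention and this transfer step is vacuous.

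If $u+\eps_{i_0}\geq|p_{i_0}|$, Lemma \ref{lem_Omega_move} applies directly with $x=u+\eps_{i_0}$, producing standard summands $D'$ with $l_{i_0}'<l_{i_0}$ and $u'+\eps'_{i_0}\leq x\leq u+1\leq 2(u+|p_{i_0}|)$, which is what we need. If $u+\eps_{i_0}<|p_{i_0}|$ I do not yet have the arrow budget to trigger $\Omega$, and my plan is to manufacture more arrows: insert a trivial unknot $U_1$ adjacent to $\partial_{i_0}$ via (K2) and apply the $\Omega(q_{i_0},p_{i_0})$-move to $U_1$. This trades $U_1$ for the main state ($q_{i_0}$ freshly introduced concentric $\partial_{i_0}$-parallel loops bearing $|p_{i_0}|$ newly oriented arrows) plus debris with strictly fewer loops and strictly fewer arrows. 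In the main term the diagram now has $l_{i_0}+q_{i_0}$ concentric loops near $\partial_{i_0}$; Lemma \ref{lem_pushing_arrows}, via Remark \ref{remark_pushing_arrows}, lets me migrate all of these arrows onto the outermost loop without increasing the arrow sum, producing a configuration with at most $u+\eps_{i_0}+|p_{i_0}|\leq 2(u+|p_{i_0}|)$ arrows on the outermost loop. Now Lemma \ref{lem_Omega_move} applies; iterating the lemma and restandardizing via Lemma \ref{lem_std_diagrams_B} after each step (using Lemma \ref{lem_merging_unknots} and Lemma \ref{lem_pushing_arrows} as needed to keep the arrow on the outermost loop) strictly reduces the number of concentric loops, and a bounded number of applications brings the loop count below the original $l_{i_0}$ while the arrow sum never exceeds $2(u+|p_{i_0}|)$.

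The main obstacle is the harder case $u+\eps_{i_0}<|p_{i_0}|$. The bookkeeping for the iterated $\Omega$-reductions is delicate: after each application one must redistribute arrows to the new outermost loop, restandardize, and verify that the arrow orientation continues to match the LHS of the $\Omega(q_{i_0},p_{i_0})$-relation, all while keeping the arrow sum within the $2(u+|p_{i_0}|)$ cap. The factor of two in the bound is precisely the slack needed to absorb the $|p_{i_0}|$ arrows introduced by $U_1$ on top of the original $u+\eps_{i_0}$.
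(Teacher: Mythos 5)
Your reduction to the case where the outermost $\partial_{i_0}$-parallel loop carries all $u+\eps_{i_0}$ arrows, correctly oriented, matches the paper (which does this with Lemmas \ref{lem_crossing_borders} and \ref{lem_merging_unknots}), and the easy case $u+\eps_{i_0}\geq |p_{i_0}|$ is fine. The gap is in the hard case $u+\eps_{i_0}<|p_{i_0}|$, which is where the content of the proposition lies. Inserting a trivial unknot $U_1$ and expanding it by the $\Omega(q_{i_0},p_{i_0})$-move raises the loop count to $l_{i_0}+q_{i_0}$, so to finish you must lower it by at least $q_{i_0}+1$. But each application of Lemma \ref{lem_Omega_move} only guarantees $l'_{i_0}<l_{i_0}$ (the $\Omega$-relation trades $q_{i_0}$ consecutive loops for anywhere between $0$ and $q_{i_0}-1$ loops), it consumes at least one cancelling pair of arrows, and it requires $x\geq|p_{i_0}|$ arrows on the outermost loop to fire again. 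With only $u+\eps_{i_0}+|p_{i_0}|<2|p_{i_0}|$ arrows available you cannot guarantee even a second application: with $u=\eps_{i_0}=0$ you start with exactly $|p_{i_0}|$ arrows, one application leaves at most $|p_{i_0}|-2$ of them, and the loop count is still at least $l_{i_0}$. Replenishing arrows by inserting further unknots adds $q_{i_0}$ loops and $|p_{i_0}|$ arrows each time, so the iteration need not terminate and the cap $2(u+|p_{i_0}|)$ is lost.

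The paper sidesteps this with a relation your argument does not have: sliding an arc over the exceptional-fiber disk yields Equation \eqref{add_arrows}, namely $D_x=-A^2\,(D_{x+1}\cup S_1)-A^4 D_{x+2}$, which increases the number of arrows on the outermost loop \emph{without changing} $l_{i_0}$, at the cost only of spawning one-arrowed unknots $S_{\pm 1}$ that are merged back at the end. Iterating this pumps $x$ up to at least $|p_{i_0}|$ while keeping the total arrow count at most $2(u+|p_{i_0}|)$ (this is exactly where the factor of $2$ in the bound comes from), after which a \emph{single} application of Lemma \ref{lem_Omega_move} gives $l'_{i_0}<l_{i_0}$. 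To repair your proof you need either this relation or some other mechanism that manufactures arrows near $\partial_{i_0}$ without inflating the loop count.
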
 
\begin{proof}
For simplicity, set ${i_0}=1$. We assume that $p_1>0$ so that the arrows in the LHS of the $\Omega(q_1,p_1)$-relation are oriented counterclockwise; the other case is analogous. 
We can assume that if $\eps_1=1$, then the orientation of the arrow in the loop furtherst from $\partial_1$ agrees with the LHS of the $\Omega(q_1,p_1)$-relation. This is true since Lemma \ref{lem_crossing_borders} lets us flip the orientation at the expense of having one debris diagram with $l'_1=l_1$, $\eps'_1=0$, and $u'=u+1$. 

Denote by $D_x$ the standard diagram in $\mathcal{B}$, identical to $D$ away from a neighborhood of $\partial_1$ with $l_1$ copies of $\partial_1$, having $x$ arrows oriented counterclockwise in the loop furtherst from $\partial_1$. 
Recall that $S_a$ denotes a small unknot with $a\in \Z$ arrows oriented counterclockwise. We have that $D=D_{\eps_1}\cup S_u$, where the disjoint union of the diagrams is made so that $S_u$ lies inside a small disk away from the diagram $D_{x}$. 

Merge the arrows on $U$ with the outer loop around $\partial_1$ using Lemma \ref{lem_merging_unknots}. Thus, $D$ is a linear combination of diagrams $D_x$ with no unknots ($U=\emptyset$). 
If $\eps_1=1$, we get diagrams with $0\leq x\leq u+\eps_1$, and if $\eps_1=0$, we obtain diagrams with $0\leq |x|\leq u$. 
We focus on each $D_x$.
Use the relation around $\partial_1$
\begin{equation}\label{add_arrows}
\includegraphics[valign=c,scale=.45]{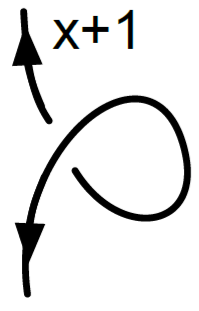} = \includegraphics[valign=c,scale=.45]{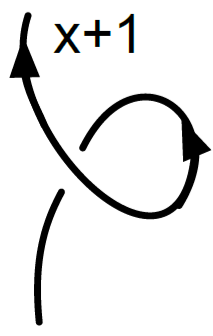}
\quad \implies \quad 
\includegraphics[valign=c,scale=.55]{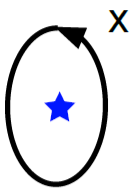} = -A^2 \includegraphics[valign=c,scale=.55]{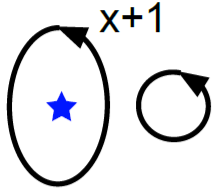} - A^{4} \includegraphics[valign=c,scale=.55]{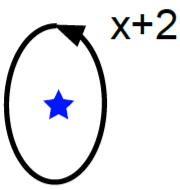}
\end{equation}
to write $D_x$ as a linear combination of $D_{x+1}\cup S_1$ and $D_{x+2}$. Thus, at the expense of getting a cluster of 1-arrowed unknots $S_{\pm1}$, we can increase/decrease the arrows in the outermost loop around $\partial_1$. Hence, the original diagram $D$ is a linear combination of diagrams of the form $D_{x} \cup \left( \cup_y S_1\right)$ where $x\geq p_1$, $y\geq 0$ and $x+y\leq 2(u+ p_1)$. To see the upper bound for $x+y$, observe that if we start with $D_{-u}$, one might need to add a copy of $S_{1}$ $(u+ p_1)$ times in order to reach $x \geq p_1$. 
Lemma \ref{lem_Omega_move} implies that each $D_{\pm x} \cup \left( \cup_y S_1\right)$ is a linear combination of diagrams with $l'_1<l_1$ and at most $x+y$ arrows. After making such diagrams standard and merging the arrowed unknots, we obtain diagrams with $l'_1<l_1$ and $u'+\eps'_1\leq 2(u+  p_1)$ as desired.  
\end{proof}

\begin{proof}[Proof of Theorem \ref{thm_finite_SFS}]
Let $M=M\left(g; b,\{(q_i,p_i)\}_{i =1}^n\right)$ be a Seifert fibered space with non-empty boundary. 
Proposition \ref{thm_boundary_case} and Lemma \ref{lem_std_diagrams_B} imply that $\mathcal{S}(M)$ is generated over $\Q(A)$ by standard diagrams in $\mathcal{B}$. 
Furthermore, it follows from Lemmas \ref{lem_merging_unknots}, \ref{lem_pushing_arrows}, and \ref{lem_crossing_borders} that it is enough to consider standard diagrams with all arrows on separating loops oriented counterclockwise. 
Notice that the standard condition allow us to overlook the numbers $l_{n+j}$ for $j=1,\dots, b$ since they correspond to coefficients of the ring $\S(\partial M, \Q(A))$. 

Divide the collection $\mathcal{B}$ into two sets $\mathcal{B}_{ns} =\{\gamma\cup \alpha\}$  and $\mathcal{B}_{U}=\{U\cup \alpha\}$. 
Proposition 4.1 of \cite{KBSM_S1} implies that arrowed non-separating simple closed curves are equal in $\mathcal{S}(M)$ if they are the same loop and have the same number of arrows modulo 2. Thus, using Proposition \ref{lem_case_2}, we obtain that $\Q(A)\cdot\mathcal{B}_{ns}$ is generated by standard diagrams $D=\gamma \cup \alpha$ with $0\leq l_i< q_i$ for $i=1,\dots, n$ and all arrows in copies of $\partial$-parallel loops oriented counterclockwise. 
Hence, $\Q(A)\cdot\mathcal{B}_{ns}$ is generated as a $\S(\partial M, \Q(A))$-module by a set of cardinality 
\[ r_{ns} \leq (2^{2g+1}-2) \prod_{i=1}^n (2q_i-1).\]
Proposition \ref{lem_case_2} implies that $\Q(A)\cdot\mathcal{B}_{U}$ is generated over $\Q(A)$ by standard diagrams satisfying $0\leq l_i <q_i$ for all $i=1, \dots, n$. Therefore, since $U$ can be pushed towards the boundary, $\Q(A)\cdot\mathcal{B}_{U}$ is generated over $\S(\partial M ,\Q(A))$ by a finite set of cardinality 
\[ r_U \leq  \prod_{i=1}^n (2q_i-1),\]
Hence, $\S(M)$ is a finitely generated $\S(\partial M, \Q(A))$-module.
\end{proof}


\begin{proof}[Proof of Theorem \ref{conj2_SFS}]
Let $\lambda\subset \partial M$ be a $S^1$-fiber and let $\mu_N =\partial_N \times \{pt\}$ be a meridian of $\partial M$. 
For $i=1, \dots, n$, the $\Omega(1,p_i)$-move turns loops parallel to $\partial_i$ into arrowed unknots. Thus, Proposition \ref{thm_boundary_case}, Lemma \ref{lem_std_diagrams_B}, and Equation \eqref{eqn_23} imply that $\S(M)$ is generated over $\Q(A)$ by standard diagrams in $\mathcal{B}=\{\gamma\cup \alpha, U\cup \alpha\}$ with no parallel loops around the exceptional fibers. In particular, $\alpha$ only contains loops around $\partial_N$. Hence $\Q(A) \cdot \mathcal{B}_{ns}$ is generated over $\Q(A)[\mu_N]$  by elements of the form $\gamma$ and $\gamma \cup \alpha$ where $\gamma \in \mathcal{F}$ has at most one arrow and $\alpha$ is a copy of $\partial_N$ with one arrow. 

Let $U\cup \alpha\in \mathcal{B}_U$ and suppose that $U$ has $u\neq 0$ arrows. Using Equation \eqref{add_arrows} of Proposition \ref{lem_case_2}, we can assume that the loop of $\alpha$ furthest to the boundary has at least one arrow. Then, using Lemmas \ref{lem_flip_unknot} and \ref{lem_merging_unknots}, we can write any diagram in $\mathcal{B}_U$ as a $\Q(A)$-linear combination of diagrams with only $\partial$-parallel curves and such that the loop furthest to $\partial_N$ has $x\geq 0$ arrows oriented clockwise. In other words, $\Q(A)\cdot \mathcal{B}_U=\Q(A) \langle U, \mu_N^k\cdot \alpha_x| k, x\geq 0\rangle$, where $\alpha_x$ denotes a copy of $\mu_N$ with $x$ arrows. 

We will see that it is enough to consider $0\leq x<4g+2n$. Take $\mu_N^k\cdot \alpha_x$ with $k\geq 0$ and $x\geq 4g+2n$. By Lemma \ref{lem_earthquake}, $\mu_N^k\cdot \alpha_x$ is a $\Z[A^{\pm1}]$-linear combination of diagrams of the form $U\cup \mu_N^k$ and $\mu_N^k \cdot \alpha_{y}$ with $0\leq y<x$. We can proceed as in the previous paragraph and write the diagrams $U\cup \mu_N^k$ as $\Z[A^{\pm1}]$-linear combinations of $\mu_N^{\max(0,k-1)}\cdot \alpha_{x'}$ for some $x'\geq 0$. Hence, $\Q(A)\cdot \mathcal{B}_U=\Q(A) \langle U, \mu_N^k\cdot \alpha_x| 0\leq k, 0\leq x <4g+2n \rangle$.

To end the proof, consider $F_1$ the subspace generated by $\mathcal{B}_{ns} \cup \{\mu_N^k\cdot \alpha_x| 0\leq x <4g+2n \}$ over $\Q(A)$, and $F_2$ the $\Q(A)$-subspace generated by arrowed unknots. By Proposition \ref{thm_boundary_case}, $\S(M) = F_1 + F_2$. Let $\Sigma_1$ and $\Sigma_2$ be neighborhoods of $\mu_N$ and $\lambda$ in $\partial M$, respectively. We have shown that $F_1$ is a $\S(\Sigma_1, \Q(A))$-module of rank at most $2(2^{2g+1}-2)+ 4g+2n$. Also, since every arrowed unknot can be pushed inside a neighborhood of $\Sigma_2$, $F_2$ is generated over $\S(\Sigma_2, \Q(A))$ by the empty link. So $F_2$ is a $\S(\Sigma_2, \Q(A))$-module of rank at most one. 
\end{proof}


\bibliography{KBSM_of_SFS}
\bibliographystyle{plain}

\begin{flushright}
Jos\'e Rom\'an Aranda, University of Iowa\\
email: \texttt{jose-arandacuevas@uiowa.edu}\\ $\quad$ \\
Nathaniel Ferguson, Colby College \\ 
email: \texttt{nrferg21@colby.edu}
\end{flushright}
\end{document}